	\newenvironment{customthm}[1]
	{\innercustomthm}
	{\endinnercustomthm}
	\newtheorem{theorem}{Theorem}[section]
	\newtheorem{corollary}[theorem]{Corollary}
	\newtheorem{definition}[theorem]{Definition}
	\newtheorem{lemma}[theorem]{Lemma}
	\newtheorem{proposition}[theorem]{Proposition}
	\numberwithin{equation}{section}
	\theoremstyle{remark}
	\newtheorem{remark}[theorem]{Remark}
\newcommand{\arxiv}[1]{\href{http://arxiv.org/abs/#1}{\tt arXiv:\nolinkurl{#1}}}
\def \hg {\widehat{\mathfrak{g}}}
\def \g {\mathfrak{g}}
\def \n{\mathfrak{n}}
\def \tpsi{\widetilde{\psi}}
\def \Z{\mathbb{Z}}
\def \Gr{\mathrm{Gr}}
\def \TH{\Theta}
\def \I{\mathbb{I}}
\def \N{\mathbb{N}}
\def \TH{\Theta}
\def \Sym{\mathrm{Sym}}
\def \bS{\mathbb{S}}
\def \ov{\overline}
\def \C{\mathbb{C}}
\def \bK{\mathbf{K}}
\def \Y{\mathbf{Y}^\imath} 
\def \cR{\mathcal{R}}
\def \A{\mathcal{A}}
\def \H{h}
\def \X{b}
\def \h{\hbar}
\def \oX{\ov{\X}}
\def \oH{\ov{\H}}
\def \B{\mathbf{B}}
\def \ad{\mathrm{ad}\,}
\def \L{\mathcal{L}}
\def \bTH { \boldsymbol{\Theta}}
\def \bH{\mathbf{H}}
\def \bDel{ \boldsymbol{\Delta}}
\def \bB{\mathbf{B}}
\def \bh{\mathbf{h}}
\def \bb{\mathbf{b}}
\def \ttb{\mathtt{b}}
\def \U{\mathbf U}
\def \Ui{\mathbf{U}^\imath}
\def \tUi{\mathbf{U}^\imath}
\def \tUiA{\mathbf{U}_{\mathcal{A}}^\imath }
\newcommand{\qbinom}[2]{\begin{bmatrix} #1\\#2 \end{bmatrix} }
\newcommand{\blue}[1]{{\color{blue}#1}}
\begin{document}

\title[Affine $\imath$quantum groups and twisted Yangians]{Affine $\imath$quantum groups and twisted Yangians in Drinfeld presentations}

\author{Kang Lu}
 \author{Weiqiang Wang}
 \address{Department of Mathematics, University of Virginia,
Charlottesville, VA 22903, USA}\email{kang.lu@virginia.edu, ww9c@virginia.edu}
\author{Weinan Zhang}
\address{Department of Mathematics and New Cornerstone
Science Laboratory, The University of Hong Kong, Hong Kong SAR, P.R.China}
\email{mathzwn@hku.hk}

\subjclass[2020]{Primary 17B37.}
\keywords{Drinfeld presentation, twisted Yangians, $\imath$quantum groups}

\begin{abstract}
We formulate a family of algebras, twisted Yangians (of split type) in current generators and relations, via a degeneration of the Drinfeld presentation of affine $\imath$quantum groups (associated with split Satake diagrams). These new algebras admit PBW type bases and are shown to be a deformation of twisted current algebras; presentations for twisted current algebras are also provided. For type AI, it matches with the Drinfeld presentation of twisted Yangian obtained via Gauss decomposition. We conjecture that our split twisted Yangians are isomorphic to the corresponding ones in RTT presentation.  	
\end{abstract}

	\maketitle
		\setcounter{tocdepth}{1}
\tableofcontents

\section{Introduction}

\subsection{Background}

Drinfeld defined the Yangians in the $J$-presentation and subsequently gave a new current presentation exhibiting their structures as a deformation of current algebras; cf. \cite{Dr87, Dr88}. Yangians can also be defined using an RTT presentation; cf. \cite{MNO96, AACFR03, Mol07, JLM18, Wen18}. 

Associated with symmetric pairs of classical types, twisted Yangians in RTT presentations have been formulated and studied in depth in \cite{GR16, GRW17}, generalizing the original construction by Olshanski \cite{Ol92} (also cf. \cite{MNO96, Mol07}) for type AI and AII. Twisted Yangians can be viewed as coideal subalgebras of Yangians. A formulation of twisted Yangians in $J$-presentations has also been available \cite{Ma02, BR14}.  

Yangians and the Yang-Baxter equation arise in the
studies of exactly solvable two-dimensional statistical models and quantum integrable systems without boundaries (cf. \cite{Dr87} for extensive references). On the other hand, twisted Yangians and reflection equations arise from $(1+1)$-dimensional quantum field theories on a half-line and integrable systems with boundaries; see \cite{Ch84, Skl88}. Twisted Yangians have found some other physical applications; cf. e.g. \cite{MR11, MR12}.

A Drinfeld type current presentation for twisted Yangians has been missing for decades in the literature until very recently, when the authors \cite{KLWZ23} succeeded in constructing such a presentation for twisted Yangians of type AI through Gauss decomposition; a similar Gauss decomposition approach was used previously to construct Drinfeld presentations (or more generally parabolic presentations) from the RTT presentation of Yangians ${\mathbf Y}(\mathfrak{gl}_N)$ in \cite{BK05}; also cf. \cite{JLM18}. It remains highly nontrivial to generalize the Gauss decomposition approach further to obtain current presentations of twisted Yangians beyond type AI.

\subsection{Goal}
A degeneration approach relating affine quantum groups to Yangians was stated by Drinfeld \cite{Dr87} and then fully developed in \cite{GTL13} and differently in \cite{GM12}. In addition, Conner and Guay \cite{CG15} related via degeneration twisted quantum loop algebras to Olshanski's twisted Yangians in RTT presentations. Note that the relevant presentations and PBW basis properties for both affine quantum groups and (twisted) Yangians were known in the literature in the 20th century. 

The goal of this paper is to develop a degeneration approach to relate affine $\imath$quantum groups to twisted Yangians of arbitrary split types in Drinfeld type current presentations; these quantum algebras are associated to the split symmetric pairs (see Table~\ref{table:decomposition}). For us, formulating the current presentations of twisted Yangians and establishing their PBW bases are part of the new challenges. 

This can be viewed as a new extension of the $\imath$-program \cite{BW18}, which aims at generalizing various fundamental constructions from quantum groups to $\imath$quantum groups.

\subsection{Affine $\imath$quantum groups}

The $\imath$quantum groups arising from quantum symmetric pairs can be viewed as a vast generalization of Drinfeld-Jimbo quantum groups. The (affine) $\imath$quantum groups of split type were formulated in \cite{BB10} in connection to boundary affine Toda field theories, and they form a distinguished class of $\imath$quantum groups of Kac-Moody type \cite{Ko14}. 

The Drinfeld presentations for affine $\imath$quantum groups were obtained in \cite{LW21} for split ADE types and then in \cite{Z22} for split BCFG types. One of the most challenging aspects in such Drinfeld presentations is formulating the current Serre relations. Denote by $(c_{ij})_{i,j\in \I^0}$ the (finite) Cartan matrix with $\text{diag}(d_1, \ldots, d_n)$ as its symmetrizer. Explicit formulas for the current Serre relations for $c_{ij}=-1, -2$ were given {\em loc. cit.} and they are complicated because of the appearance of lower terms. 
In the case when $c_{ij}=-3$, it became too complicated to write down such Serre relations in general; instead reduced (equal-index) Serre relations were obtained and shown in \cite{Z22} to suffice as defining relations for Drinfeld presentations of affine $\imath$quantum groups. 

As the example for twisted Yangians of type AI already indicates, the current Serre relation for $c_{ij}=-1$ is already highly nontrivial (cf. \cite[(5.5)]{KLWZ23}). One expects that the current Serre relations for twisted Yangians get more complicated for $c_{ij}=-2$ or $-3$.

\subsection{Our approach}

A split $\imath$quantum group admits additional parameters $(\varsigma_i)_{i\in \I}$ beyond the quantum parameter $v$, which is a subtle point at the early stage of the project. To that end, having a Drinfeld presentation of twisted Yangians of type AI available from the Gauss decomposition approach in our previous work \cite{KLWZ23} helps us to make a suitable choice of the parameters. On the other hand, the current Serre relations discovered via the degeneration technique here were very helpful at the early stage of that work.

In Section~\ref{sec:degenerationloop}, we formulate the twisted loop algebras $\g[t,t^{-1}]^\omega$ and twisted current algebras $\g[u]^{\check\omega}$, as fixed point subalgebras of loop algebras and current algebras with respect to (extended) Chevalley involutions. As a warm-up preparation for the forthcoming sections, we explain how to degenerate the enveloping algebra $U(\g[t,t^{-1}]^\omega)$ to $U(\g[u]^{\check\omega})$. We also provide presentations for $U(\g[u]^{\check\omega})$ which are to be used in proving the PBW basis theorem for twisted Yangians in later sections. 

In Section~\ref{sec:iQG}, we review the Drinfeld type presentations of affine $\imath$quantum groups $\Ui$ from \cite{LW21, Z22}.


We formulate in Definition~\ref{def:YN} the twisted Yangians $\Y$ of split types A--F in Drinfeld presentations in terms of generators $\{ \H_{i,s},\X_{j,r} \mid s\in 2\N+1,r\in\N,i,j\in \mathbb{I}^0 \}$, with defining relation including explicit general current Serre relations. As shown in later sections, these relations are obtained by applying a suitable degeneration to the Drinfeld type defining relations of affine $\imath$quantum groups. More precisely, we construct a filtration on $\Ui$ with associated graded $\Gr_{\mathbf K} \tUi$ given in \eqref{GrK}, generalizing the construction in \cite{CG15}. Our main results are the following 2 theorems.

\begin{customthm} {\bf A} [Theorem~\ref{thm:qiso}]
\label{th:A}
There is an algebra isomorphism
\begin{align*}
\begin{split}
\Phi: & \Y \longrightarrow \Gr_{\bK}\tUi, 
\\
& \X_{i,r}  \mapsto \ov{\X}_{i,r,1},\quad
\H_{i,s} \mapsto \ov{\H}_{i,s,1}, 
\end{split}
\end{align*}
for $i\in \I^0,r\in \N, s\in 2\N+1$, where $\X_{i,r,1}, \H_{i,s,1} \in \tUi$ are defined in \eqref{def:HX} and $\ov{\X}_{i,r,1},\ov{\H}_{i,s,1}$ are their images in $\Gr_{\bK}\tUi$. 
\end{customthm}

Let $\cR^+$ denote the set of positive roots associated to a simple system $\{\alpha_i| i\in \I^0\}$ for $\g$. We form certain elements $\X_{\alpha,r} \in \Y$ by iterated commutators as in \eqref{def:bfalpha}. 

\begin{customthm} {\bf B}   [Theorem~\ref{thm:PBW}]
\label{th:B}
The ordered monomials of $\{\X_{\alpha,r},\H_{i,s} \mid \alpha\in \cR^+,i\in \I^0,r\in \N, s\in 2\N+1 \}$ form a basis of $\Y$.
\end{customthm}

Assume for now that $\Phi: \Y \rightarrow \Gr_{\mathbf K} \tUi$ is a homomorphism. Adapting arguments from \cite{GM12}, we establish a PBW basis for $\Gr_{\mathbf K} \tUi$. Then we show that a spanning set for $\Y$ is mapped to a basis for $\Gr_{\mathbf K} \tUi$, completing the proofs of Theorems~\ref{th:A} and \ref{th:B}.

It remains to show that $\Phi: \Y \rightarrow \Gr_{\mathbf K} \tUi$ is a homomorphism. To that end, we shall verify that all the defining relations for $\Y$ are satisfied by the images of the generators of $\Y$ under $\Phi$. All the relations in component-wise forms (except the current Serre relations \eqref{ty6} for $c_{ij}=-2$) are verified in Section~\ref{sec:verify}. 
In Section~\ref{sec:gen}, we reformulate all the defining relations for $\Y$ in generating function form and then verify them (including \eqref{ty6}) in $\Gr_{\mathbf K} \tUi$.

In light of Theorem~\ref{thm:Yreduced}, $\Y$ admits an equivalent presentation which only requires the simpler finite type Serre relations as follows (see \eqref{ty4}--\eqref{ty6} for general current Serre relations for $c_{ij}=-1,-2$).

\begin{customthm} {\bf C} \label{def:YNreduced}
\label{th:C}
The twisted Yangian $\Y$ of split type $(\I, \mathrm{Id})$ is isomorphic to the $\C[\h]$-algebra generated by $\H_{i,s},\X_{j,r}$, for $s\in 2\N+1,r\in\N,i,j\in \mathbb{I}^0$, subject to the relations: 
\begin{align*}
[\H_{i,s},\H_{j,r}] &=0,
\\
[\H_{i,1},\X_{j,r}]& =2  c_{ij}\X_{j,r+1},
\\
[\X_{j,r},\H_{i,s+2}]-[\X_{j,r+2},\H_{i,s}] &=-\h d_i c_{ij}( \X_{j,r+1}\H_{i,s} + \H_{i,s}\X_{j,r+1})+ \frac{\h^2 d_i^2 c^2_{ij}}{4}[\X_{j,r},\H_{i,s}] ,
\\
[\X_{i,s},\X_{j,r+1}]-[\X_{i,s+1},\X_{j,r}] &+\frac{\h d_i c_{ij}}{2}(\X_{j,r}\X_{i,s}+\X_{i,s}\X_{j,r})
=2\delta_{ij} (-1)^s \H_{i,r+s+1},
\end{align*}
and the additional finite type Serre relations: 
\begin{align*}
 [b_{i,0},b_{j,0}] 
&=0 \qquad\qquad\qquad\;\hskip 2.85cm (c_{ij}=0),
\\
 \big[b_{i,0},[b_{i,0},b_{j,0}]\big] 
&=- b_{j,0} \qquad\qquad\hskip 2.8cm (c_{ij}=-1),
\\
 \Big[b_{i,0},\big[b_{i,0},[b_{i,0},b_{j,0}]\big]\Big] 
 &= - 4 [b_{i,0},b_{j,0}] \hskip 3.35cm (c_{ij}=-2),
\\
%
\Big[b_{i,0},\Big[b_{i,0},\big[b_{i,0},[b_{i,0},b_{j,0}]\big]\Big]\Big]
&= -10 \big[b_{i,0},[b_{i, 0},b_{j,0}]\big] -9 b_{j,0}\qquad (c_{ij}=-3).
\end{align*}
\end{customthm}

We take the liberty of including in the above presentation the twisted Yangians of split type for type $G_2$, which require the finite type Serre relation for $c_{ij}=-3$ (see Section~\ref{sec:G2}). For type $G_2$, we use this presentation as the definition of the twisted Yangian since we do not have an explicit general current Serre relations for $c_{ij}=-3$. Nevertheless, we can still establish a PBW basis for the twisted Yangian of type $G_2$ and show that it is indeed a flat deformation of the enveloping algebra of the twisted current algebra; in other words, Theorems~\ref{th:A} and \ref{th:B} remain valid for type $G_2$. 

For type AI, the Drinfeld presentation of twisted Yangians obtained via degeneration coincides with the one obtained via Gauss decomposition approach in our previous work \cite{KLWZ23}; and hence, the twisted Yangian of type AI here is isomorphic to the one in RTT presentation \cite{Ol92}. We conjecture that the general twisted Yangians of split type in Drinfeld type current presentation given in this paper are isomorphic to the corresponding twisted Yangians in RTT presentations given in \cite{GR16}. Hopefully such an isomorphism can be established by further developing the Gauss decomposition approach. 

A Drinfeld presentation for twisted Yangians of quasi-split type will be developed by two of the authors \cite{LZ24} using approaches of the Gauss decomposition and the degeneration (cf. \cite{LWZ24}).

\section{Degeneration from loop and twisted loop algebras}
\label{sec:degenerationloop}

In this section, by ``twisted" we refer to fixed point subalgebras of current algebras and loop algebras under Chevalley involutions. We show how to obtain twisted current algebras from degeneration of twisted loop algebras. In addition, we give two presentations of a twisted current algebra. 

\subsection{From loop algebras to current algebras} 
 \label{subsec:loop}
 
Let $\I=\{0,1,\ldots,n\}$ and $\I^0=\{1,\ldots,n\}$. Let $\hg$ be an untwisted affine Kac-Moody algebra with Cartan matrix $C=(c_{ij})_{i,j\in\mathbb{I}}$. Let $\mathfrak{g}$ be the simple Lie algebra with Cartan matrix $C^0=(c_{ij})_{i,j\in\mathbb{I}^0}$. Let $D=\mathrm{diag}(d_0,\ldots,d_n)$, where $d_i\in \N$ and $\gcd(d_0,\ldots, d_n) =1$, be the diagonal matrix such that $DC$ is symmetric. 

Let $\cR$ denote the root system of $\g$, $\{\alpha_i| i\in \I^0\}$ denote a simple system of $\cR$, and  $\cR^+$ denote the corresponding set of positive roots.
Let $\alpha_{\max}$ be the highest root in $\cR^+$. Set $\alpha_0 =\delta-\alpha_{\max}$, where $\delta$ denote the basic imaginary root of $\hg$. Then $\{\alpha_i| i\in \mathbb{I}\}$ forms a set of simple roots for $\hg$. Recall that $\hg$ (with the degree operator omitted) has a loop algebra realization which fits into a short exact sequence of Lie algebras
\begin{equation}
\label{ses}
0 \longrightarrow \C c \longrightarrow \hg \longrightarrow \mathfrak{g}[t,t^{-1}]\longrightarrow 0.
\end{equation}

The (enveloping algebra of) current algebra $\g[u]$ can be obtained from the (enveloping algebra of) loop algebra $\g[t,t^{-1}]$ via a degeneration as follows (cf. \cite{GM12} for a quantum counterpart). Write $g_k:=gt^k$ for any $g\in \g,k\in \Z$.
Define $g_{r,k}\in \g[t,t^{-1}]$ for $g\in \g, k\in \Z, r\geq 0$ by
\begin{align}
\label{xrk}
g_{r,k}:=\sum_{s=0}^r (-1)^{r-s} \binom{r}{s} g_{s+k}=g(t-1)^r t^k.
\end{align}

For $r\geq 0$, set $\kappa_r$ to be the ideal of $ U\big(\g[t,t^{-1}]\big)$ generated by elements $(g_1)_{r_1,k_1} \cdots (g_a)_{r_a,k_a}$ with $g_i\in \g, k_i\in \Z, 1\leq i\leq a, r_1+\ldots + r_a \geq r, a\geq0$. Then $U\big(\g[t,t^{-1}]\big)$ admits a decreasing filtration 
\begin{align}  \label{filtration}
U\big(\g[t,t^{-1}]\big) =\kappa_0 \supset \kappa_1 \supset \cdots \supset \kappa_m \supset \cdots, 
\end{align}
and we denote the associated graded algebra by 
\[
\Gr_{\kappa}\g[t,t^{-1}]=\bigoplus_{r\geq 0} \kappa_r/ \kappa_{r+1}. 
\]
It is clear from the definition that
$g_{r,k}\in \kappa_r/ \kappa_{r+1}.$
Write $\ov{g_{r,k}}$ for the image of $g_{r,k}$ in  $\kappa_r / \kappa_{r+1}$. Since $g_{r+1,k}=g_{r,k+1}-g_{r,k}$, we have that 
\begin{align}\label{climit}
\ov{g_{r,k}}=\overline{g_{r,k+1}} \in  \Gr_{\kappa}\g[t,t^{-1}], \text{ for }k\in \Z.
\end{align}

\begin{proposition}\label{prop:iso}
There is an algebra isomorphism $\Phi: U(\g[u]) \stackrel{\cong}{\longrightarrow} \Gr_{\kappa}\g[t,t^{-1}],$ which maps $g u^r\mapsto \ov{g_{r,0}}$, for $g\in \g,r\geq 0$.  
\end{proposition}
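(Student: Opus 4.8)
The plan is to recognize the filtration $\kappa$ as the one induced on $U(\g[t,t^{-1}])$ by the decreasing Lie algebra filtration $\mathfrak{F}_R := \g\otimes (t-1)^R\,\C[t,t^{-1}]$ ($R\ge 0$), i.e. $\kappa_r=\sum_{r_1+\cdots+r_a\ge r}\mathfrak{F}_{r_1}\cdots\mathfrak{F}_{r_a}$, and then to read the Proposition as the statement that $\Gr$ commutes with $U$ for this filtered Lie algebra. First I would record the two elementary facts that drive everything: the bracket identity $[g_{r,k},h_{s,l}]=[g,h]_{r+s,k+l}$, immediate from $g_{r,k}=g(t-1)^rt^k$, which gives $[\mathfrak{F}_R,\mathfrak{F}_S]\subseteq\mathfrak{F}_{R+S}$; and that $\dim_\C\big(\C[t,t^{-1}]/(t-1)^R\big)=R$, so that $\mathfrak{F}_R/\mathfrak{F}_{R+1}\cong\g$ is spanned by the images $\overline{g_{R,k}}$, which by \eqref{climit} all equal $\overline{g_{R,1}}$. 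Together these give a graded Lie algebra isomorphism $\Gr_{\mathfrak{F}}\g[t,t^{-1}]\cong\g[u]$ sending $gu^R\mapsto\overline{g_{R,1}}$.

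Next I would check that $\Phi$ is a well-defined surjective algebra homomorphism. By the universal property of $U(\g[u])$ it suffices to produce a Lie algebra homomorphism $\g[u]\to\Gr_\kappa\g[t,t^{-1}]$, $gu^r\mapsto\overline{g_{r,1}}$. Linearity is clear, and the bracket is verified in $\Gr_\kappa\g[t,t^{-1}]$ by
$$[\overline{g_{r,1}},\overline{h_{s,1}}]=\overline{[g_{r,1},h_{s,1}]}=\overline{[g,h]_{r+s,2}}=\overline{[g,h]_{r+s,1}}=\Phi([g,h]u^{r+s}),$$
where the second equality is the bracket identity and the last two use \eqref{climit}. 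Surjectivity is then immediate: $\kappa_e/\kappa_{e+1}$ is spanned by the images of products $\prod_i(g_i)_{s_i,k_i}$ with $\sum_i s_i=e$, and each such image equals $\prod_i\overline{(g_i)_{s_i,1}}=\Phi\big(\prod_i g_iu^{s_i}\big)$.

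The heart of the proof, and the only real obstacle, is injectivity: one must rule out any collapse of graded dimensions under $\Gr_\kappa$. The naive move is to invoke a filtered PBW theorem for $\g[t,t^{-1}]$, but this fails directly because $\g[t,t^{-1}]$ is infinite dimensional and the filtration $\mathfrak{F}$ is \emph{not split} — there is no vector-space basis of $\C[t,t^{-1}]$ adapted to $\mathfrak{F}$ (for instance $\{(t-1)^R\}_{R\ge0}$ only spans $\C[t]$) — so one cannot simply argue that PBW monomials stay independent.

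I would resolve this by a degreewise finite truncation. Fix $e\ge 0$. Since $(t-1)^{e+1}\C[t,t^{-1}]$ is an ideal of $\C[t,t^{-1}]$, the subspace $\g\otimes(t-1)^{e+1}\C[t,t^{-1}]=\mathfrak{F}_{e+1}$ is a Lie ideal of $\g[t,t^{-1}]$, and the quotient $\g_e:=\g\otimes\big(\C[t,t^{-1}]/(t-1)^{e+1}\big)\cong\g\otimes\C[u]/(u^{e+1})$ is a \emph{finite-dimensional} filtered Lie algebra now carrying the honest filtered basis $\{x_\mu u^R: 1\le\mu\le\dim\g,\ 0\le R\le e\}$. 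For such an algebra the filtered PBW theorem applies cleanly and yields $\Gr\, U(\g_e)\cong U(\Gr\g_e)$, whose degree-$e$ component is exactly $U(\g[u])_e$. The quotient map $U(\g[t,t^{-1}])\twoheadrightarrow U(\g_e)$ has kernel the ideal generated by $\mathfrak{F}_{e+1}$, which lies in $\kappa_{e+1}$; hence it induces an isomorphism $\kappa_e/\kappa_{e+1}\xrightarrow{\ \cong\ }\Gr_e U(\g_e)\cong U(\g[u])_e$ in each degree $e$, compatibly with $\Phi$ on generators. Combined with surjectivity, this shows $\Phi$ is an isomorphism. The step I expect to demand the most care is precisely this truncation — checking that the kernel of $U(\g[t,t^{-1}])\to U(\g_e)$ is contained in $\kappa_{e+1}$ and that the resulting degreewise identification is compatible with $\Phi$ — since this is what legitimately substitutes for the missing filtered basis of the full loop algebra.
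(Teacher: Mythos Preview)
Your argument is correct. The construction of $\Phi$ as a homomorphism and the surjectivity check match the paper's proof essentially line for line. Where you diverge is in the injectivity step: the paper argues directly that the ordered PBW monomials in the elements $g_{r,1}$ are linearly independent in $U(\g[t,t^{-1}])$ and asserts that such a monomial lies in $\kappa_m$ if and only if the sum of its indices is at least $m$, concluding that their images form a basis of each $\kappa_m/\kappa_{m+1}$. You instead pass to the finite-dimensional quotient $\g_e=\g[t,t^{-1}]/\mathfrak{F}_{e+1}$, observe that this quotient is genuinely graded (so $\Gr\,U(\g_e)\cong U(\Gr\,\g_e)$ is automatic), and lift back using $\ker\big(U(\g[t,t^{-1}])\to U(\g_e)\big)\subset\kappa_{e+1}$.

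Your route is a legitimate and arguably more transparent way to justify exactly the step the paper states without proof: the ``only if'' direction of the paper's claim (and the stronger statement that no nontrivial linear combination of degree-$m$ monomials lies in $\kappa_{m+1}$) is not immediate, precisely because --- as you correctly note --- the filtration $\mathfrak{F}$ on $\g[t,t^{-1}]$ does not split, so one cannot simply run a filtered PBW argument on the full loop algebra. Your truncation supplies the missing justification cleanly. The paper's approach is shorter to write down and yields the explicit PBW basis of $\Gr_\kappa\g[t,t^{-1}]$ as a byproduct; your approach trades that explicitness for a self-contained treatment of the dimension count.
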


\begin{proof}
For $z,y\in \g, k,l\in \Z$, we have by definition \eqref{xrk} that 
\begin{align} \label{xy}
[z_{r,k},y_{s,l}]=[zy]_{r+s,k+l}.
\end{align}
It follows from \eqref{xy} that  $[\ov{z_{r,0}},\ov{y_{s,0}}]=\ov{[zy]_{r+s,0}}$, and hence sending $g u^r$ to $\ov{g_{r,0}}$ defines an algebra homomorphism $\Phi: U(\g[u])\rightarrow \Gr_{\kappa}\g[t,t^{-1}]$.

We show that $\Phi$ is an isomorphism. By the PBW theorem, the ordered monomials of $g_{r,0}$ (for $r\geq 0$, where $g$ runs over in a fixed basis of $\g$) are linearly independent in $U\big(\g[t,t^{-1}]\big)$; such monomials lie in $\kappa_m$ if and only if the sum of indices $r$ is at least $m$. Hence, the ordered monomials of $\ov{g_{r,0}}$ with the sum of indices $r$ equal to $m$ are linearly independent in $\kappa_m / \kappa_{m+1} $; such monomials also span $\kappa_m / \kappa_{m+1} $ by \eqref{climit}. Therefore, we obtained a basis of $\Gr_{\kappa}\g[t,t^{-1}]$, and it is also clear that such a basis is the $\Phi$-image of a PBW basis for $U(\g[u])$.
\end{proof}

\begin{remark}
Proposition \ref{prop:iso} and its quantum variants are well known; cf. \cite{GM12, GTL13, Wen22}. A conceptual way of seeing this is as follows (as kindly explained by a referee).
Let $\tau_{-1}$ be the shift automorphism of $\g[u]$ (and hence of $U(\g[u])$) define by sending $xu^k\mapsto x(u-1)^k$. Let $\Gamma$ be the composition of $\tau_{-1}\colon U(\g[u])\rightarrow U(\g[u])$ and the embedding $U(\g[u])\hookrightarrow U(\g[t,t^{-1}])$ sending $u\mapsto t$. By definition, $\Gamma$ sends $gu^r$ to $g_{r,0}$ for $r\ge 0$, and thus the map $\Phi$ in Proposition~\ref{prop:iso} is the associated graded map $\text{gr}(\Gamma)$. The inverse of $\text{gr}(\Gamma)$ can be constructed directly or follows from an exponential map construction in \cite{GTL13}. A version of $\Gamma$ for Yangians can be found in \cite[(5.5)]{Wen22}.
\end{remark}

\subsection{Twisted loop algebras}
 \label{sec:climit}

Let $\{e_i,f_i,h_i|i\in \I^0\}$ be a set of Chevalley generators of $\g$, and in particular, $\{e_i,f_i,h_i\}$ forms a standard $\mathfrak{sl}_2$-triple, for each $i\in \I^0$. Denote by 
\[
\omega_0: \g \longrightarrow \g, \qquad f_i\mapsto -e_i, \; e_i\mapsto -f_i, \; h_i\mapsto -h_i,
\]
the Chevalley involution on $\g$. Denote by $\g^{\omega_0}$ the $\omega_0$-fixed point subalgebra of $\g$. Then $(\g, \g^{\omega_0})$ forms a split symmetric pair. Here is a complete list of split symmetric pairs (cf. \cite{OV90}).

\begin{table}[h]
\caption{Split symmetric pairs $(\g, \g^{\omega_0})$}
\label{table:decomposition}
\begin{tabular}{| c || c | c | c | c |c |  }
\hline
$\g$ & $\mathfrak{sl}_{n+1}$  & $\mathfrak{so}_{2n+1}$ & $\mathfrak{sp}_{2n}$  & $\mathfrak{so}_{2n}$  &
\\
\hline
$\g^{\omega_0}$ & $\mathfrak{so}_{n+1}$   &$\mathfrak{so}_{n} \oplus \mathfrak{so}_{n+1}$ &$\mathfrak{gl}_{n}$ &$\mathfrak{so}_{n} \oplus \mathfrak{so}_{n}$  &
\\
\hline\hline
$\g$  & $E_6$ & $E_7$ &$E_8$ &  $F_4$ &  $G_2$  
\\
\hline
 $\g^{\omega_0}$ & $\mathfrak{sp}_{8}$ &$\mathfrak{sl}_{8}$ &$\mathfrak{so}_{16}$ & $\mathfrak{sl}_{2} \oplus \mathfrak{sp}_{6}$   & $\mathfrak{so}_{3} \oplus \mathfrak{so}_{3}$   
\\
\hline
\end{tabular}
\end{table}

The involution $\omega_0$ extends to an involution $\omega$ on the loop algebra $\g[t,t^{-1}]$ by
\begin{align} \label{omegat}
 \begin{split}
\omega: \g[t,t^{-1}] & \longrightarrow \g[t,t^{-1}], \\
gt^k & \mapsto \omega_0(g) t^{-k}.
 \end{split}
\end{align}
The involution $\omega$ extends to an involution (also  denoted $\omega$) on $\hg$ in \eqref{ses} by sending $c\mapsto -c$. 

\begin{lemma}
\begin{enumerate}
    \item 
The fixed point subalgebra $\g[t,t^{-1}]^\omega$ of $\g[t,t^{-1}]$ is generated by
\begin{align}\label{def:loop}
\theta_{i,m} :=h_i t^m -h_i t^{-m}, \qquad b_{i,k}:=f_i t^{-k} - e_i t^{k},\qquad \forall k\in \Z,m \ge 1.
\end{align}
\item
The fixed point subalgebra $\g[t,t^{-1}]^\omega$ coincides with the fixed point subalgebra $\hg^\omega$ of $\hg$.
\end{enumerate}
(The algebra $\g[t,t^{-1}]^\omega=\hg^\omega$ will be referred to as a twisted loop algebra.)
\label{lem:loop}
\end{lemma}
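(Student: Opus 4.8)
The plan is to prove the two set inclusions for part (1) separately, and then deduce part (2) from an $\omega$-equivariant version of the short exact sequence \eqref{ses}. For part (1), let $\mathfrak L$ denote the Lie subalgebra of $\g[t,t^{-1}]$ generated by the $\theta_{i,m}$ and $b_{i,k}$. The inclusion $\mathfrak L\subseteq\g[t,t^{-1}]^\omega$ is immediate: using \eqref{omegat} together with $\omega_0(h_i)=-h_i$, $\omega_0(e_i)=-f_i$, $\omega_0(f_i)=-e_i$, one checks each generator is $\omega$-fixed. For the reverse inclusion I would first record the explicit shape of the fixed space. Writing an element as $\sum_k g_k t^k$, the condition $\omega(X)=X$ reads $g_k=\omega_0(g_{-k})$, so $\g[t,t^{-1}]^\omega$ is spanned by $\g^{\omega_0}$ in degree $0$ together with the symmetrized elements $\varphi(g,k):=g\,t^k+\omega_0(g)\,t^{-k}$ for $k\ge 1$ and $g\in\g$. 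It therefore suffices to show every $\varphi(g,k)$ lies in $\mathfrak L$ and that $\g^{\omega_0}\subseteq\mathfrak L$.

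The engine of the argument is the bracket identity
\begin{equation*}
[\varphi(g,k),\varphi(g',k')]=\varphi([g,g'],\,k+k')+\varphi([g,\omega_0(g')],\,k-k'),
\end{equation*}
which follows by expanding the bracket and using that $\omega_0$ is an automorphism. Since $b_{i,k}=\varphi(-e_i,k)$ and $\theta_{i,m}=\varphi(h_i,m)$ are generators for all $k\in\Z$ and $m\ge 1$, I would prove that $\varphi(x_\beta,\ell)\in\mathfrak L$ for every positive root $\beta\in\cR^+$, every root vector $x_\beta$, and all $\ell\in\Z$, by induction on the height of $\beta$. The base case $\beta=\alpha_i$ is $\varphi(e_i,\ell)=-b_{i,\ell}\in\mathfrak L$. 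In the inductive step, writing $\beta=\gamma+\alpha_i$ with $\gamma\in\cR^+$ and applying the identity to $\varphi(x_\gamma,k)$ and $\varphi(e_i,k')$ produces $\varphi([x_\gamma,e_i],k+k')$ plus a spurious second term $\varphi([x_\gamma,\omega_0(e_i)],k-k')=-\varphi([x_\gamma,f_i],k-k')$, in which $[x_\gamma,f_i]$ is proportional to the root vector for $\gamma-\alpha_i$ of strictly smaller height (or vanishes), and hence already lies in $\mathfrak L$ by the inductive hypothesis. Crucially, because $k,k'$ range freely over $\Z$ one may fix $k+k'=\ell$ arbitrarily, and solving for the leading term yields $\varphi(x_\beta,\ell)\in\mathfrak L$. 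The identity $\varphi(\omega_0(g),\ell)=\varphi(g,-\ell)$ then produces all $\varphi(f_\beta,\ell)$ and $\varphi(h_i,\ell)$, while the degree-$0$ specializations $\varphi(e_\beta,0)=e_\beta+\omega_0(e_\beta)$ range over a spanning set of $\g^{\omega_0}$; this gives $\g[t,t^{-1}]^\omega\subseteq\mathfrak L$. The main obstacle is exactly the bookkeeping of this spurious second term and the verification that it always has strictly smaller height (and, when $\gamma-\alpha_i$ is a root, that it is still positive); exploiting the free loop index $k$ to decouple the two summands of the identity is the decisive device.

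For part (2), the key observation is that the projection in \eqref{ses} intertwines the two involutions $\omega$ and that $\omega(c)=-c$, so the sequence is $\omega$-equivariant with $(\C c)^\omega=0$. As we work in characteristic zero, passing to $\omega$-fixed points is an exact operation (decompose each term into its $\pm1$ eigenspaces), so the projection restricts to an isomorphism $\hg^\omega\stackrel{\cong}{\longrightarrow}\g[t,t^{-1}]^\omega$. Concretely, injectivity holds because the kernel meets $\hg^\omega$ only in $(\C c)^\omega=0$, and surjectivity because any fixed $x$ lifts to some $\tilde x\in\hg$, whose symmetrization $\tfrac12(\tilde x+\omega(\tilde x))\in\hg^\omega$ still projects to $x$. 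This proves $\g[t,t^{-1}]^\omega=\hg^\omega$, and I expect this part to be routine once the equivariance and $\omega(c)=-c$ are in place.
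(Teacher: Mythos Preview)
Your proof is correct and follows essentially the same route as the paper: for part~(1) both arguments run an induction on the height of a positive root, realizing the desired fixed element as a bracket of lower-height generators and absorbing the ``spurious'' term of height $\mathrm{ht}(\gamma)-1$ by the inductive hypothesis (the paper simply takes one loop index equal to $0$, so your emphasis on the free loop index to decouple summands is not actually needed); for part~(2) the paper verifies $\hg^\omega=\g[t,t^{-1}]^\omega$ by writing down an explicit basis from the loop basis together with $c$, whereas your exactness argument using $(\C c)^\omega=0$ achieves the same thing slightly more abstractly.
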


\begin{proof}
(1). For each $\beta\in \cR^+$, we fix $i_1,\ldots, i_k\in \I^0$ such that $f_{\beta}:=\big[f_{i_1},[f_{i_2},\cdots[f_{i_{k-1}},f_{i_k}]\cdots]\big]$ is a nonzero root vector in $\g_{-\beta}$. Set $e_\beta=-\omega_0(f_\beta)\in \g_{\beta}$. Then $\{f_\beta t^k, h_i t^k,e_\beta t^k|k\in \Z, \beta \in \cR^+,i\in \I^0\}$ is a basis for $\g[t,t^{-1}]$. It follows that 
\begin{align}
\theta_{i,m} =h_i t^m -h_i t^{-m}, \qquad b_{\beta,k}:=f_\beta t^{-k} - e_\beta t^{k},\qquad
(m\geq1,k\in \Z,i\in \I^0,\beta\in \cR^+),
\end{align}
form a basis for $\g[t,t^{-1}]^\omega$. 

It suffices to show that $b_{\beta,k}$ lies in the subalgebra generated by $\theta_{i,m}, b_{i,k},k\in \Z, m\geq1$, by induction with respect to the standard partial order on $\cR^+$. The standard partial order on $\cR^+$ here is understood as: $\beta_1\leq \beta_2$ if and only if $\beta_2-\beta_1$ is a nonnegative linear combination of $\alpha_i$ for $i\in\I^0$.

If $\beta=\alpha_i$ for some $i\in \I^0$, then $b_{\beta,k}=b_{i,k}$ as desired. Otherwise, we can write $\beta=\alpha_{i_1}+\beta'$ for some $i_1\in \I^0,\beta'\in\cR^+$ such that $f_\beta$ is a nonzero scalar multiple of $[f_{i_1},f_{\beta'}]$, where $\beta'<\beta$. Write $f_\beta=\kappa[f_{i_1},f_{\beta'}]$ for some $\kappa \in\C^*$. Then the element
\begin{align}
b_{\beta,k}-\kappa  [b_{i_1,0},b_{\beta',k}] =  \kappa  [e_{i_1}, f_{\beta'}]t^{-k} -\kappa [e_{\beta'}, f_{i_1}]t^{k}
\end{align}
is a multiple of $b_{\beta'-\alpha_{i_1},k}$ (if $\beta'-\alpha_{i_1}$ is not a root, we simply set $b_{\beta'-\alpha_{i_1},k}=0$). Since $\beta'-\alpha_{i_1}<\beta'<\beta$, by induction hypothesis, both $b_{\beta'-\alpha_{i_1},k}$ and $b_{\beta',k}$ lie in the subalgebra generated by $\theta_{i,m}, b_{i,k},$ for $k\in \Z, m\geq1$. Therefore, $b_{\beta,k}$ also lies in this subalgebra as desired.

(2). Clearly, $\{f_\beta t^k, h_i t^k,e_\beta t^k,c|k\in \Z, \alpha\in \cR^+,i\in \I^0\}$ is a basis for $\hg$. Since $\omega(c)= -c$, this basis of $\hg$ implies that $\{\theta_{i,m},b_{\beta,k}|m\geq1,k\in \Z,i\in \I^0,\beta\in \cR^+\}$ is a basis for $\hg^\omega$ and the central extension when restricted to $\hg^\omega$ is trivial. Hence, $\hg^\omega$ and $\g[t,t^{-1}]^\omega$ are naturally identified. 
\end{proof}

\subsection{Filtration on twisted loop algebras}

We extend the involution $\omega_0$ on $\g$ to an involution $\check\omega$ on the current algebra $\g[u]$ by  
\begin{align}  \label{omegau]}
 \begin{split}
   \check\omega : \g[u] & \longrightarrow \g[u], 
   \\
    gu^r & \mapsto  (-1)^r \cdot \omega_0(g) u^r. 
 \end{split}
\end{align}
We call $\g[u]^{\check\omega}$ a twisted current algebra.

We shall formulate a degeneration from $U(\g[t,t^{-1}]^\omega)$ to $U(\g[u]^{\check\omega})$. A quantum generalization will be given in Section~\ref{sec:main}. 

For $m\geq 0$, define $\kappa^\imath_m$ to be the ideal $\kappa_m \cap U(\g[t,t^{-1}]^\omega)$ of $U(\g[t,t^{-1}]^\omega)$. 
Then $U\big(\g[t,t^{-1}]^\omega\big)$ admits a decreasing filtration 
\begin{align}  \label{filtration2}
U\big(\g[t,t^{-1}]^\omega\big) =\kappa_0^\imath \supset \kappa_1^\imath \supset \cdots \supset \kappa_m^\imath \supset \cdots. 
\end{align}
Since the two filtrations \eqref{filtration} and \eqref{filtration2} are compatible, i.e., $\kappa^\imath_m=\kappa_m \cap U(\g[t,t^{-1}]^\omega)$, the associated graded of \eqref{filtration2}, $\Gr_{\kappa^\imath}\g[t,t^{-1}]^\omega:=\bigoplus_{m\geq 0} \kappa_m^\imath / \kappa_{m+1}^\imath,$ can be regarded naturally as a subalgebra of $\Gr_{\kappa}\g[t,t^{-1}]$:
\begin{align}  \label{GrGr}
\Gr_{\kappa^\imath}\g[t,t^{-1}]^\omega  \subset \Gr_{\kappa}\g[t,t^{-1}].
\end{align}

Define, for $r\geq 0,l\geq 1$ and $k\in \Z$,
\begin{align}\label{def:cgr}
\theta_{i,r,l}=\sum_{s=0}^r (-1)^{r-s} \binom{r}{s} \theta_{i,s+l},\qquad \beta_{i,r,k}=\sum_{s=0}^r (-1)^{r-s} \binom{r}{s} b_{i,s+k}.
\end{align}
We denote by $\overline\theta_{i,r,l}$ and $\overline \beta_{i,r,k}$ their images in $\Gr_{\kappa^\imath}\g[t,t^{-1}]^\omega$. 
Viewed as elements in $U(\g[t,t^{-1}])$, the following identities hold:  
\begin{align}
\label{climit2}
\begin{split}
\theta_{i,r,k} &=h_i (t-1)^r t^k -h_i (1-t)^r t^{-r-k},
\\
\beta_{i,r,k} &=f_i(1-t)^r t^{-r-k}-e_i (t-1)^r t^k.
\end{split}
\end{align}

Recall the isomorphism $\Phi: U(\g[u]) \rightarrow \Gr_{\kappa}\g[t,t^{-1}]$ from Proposition~\ref{prop:iso} and keep in mind \eqref{GrGr}.

\begin{proposition}
  \label{prop:climit}
The following statements hold:
\begin{itemize}
\item[(1)] For $r$ even, we have $\theta_{i,r,k}\in \kappa_{r+1}^\imath$ and thus $\ov{\theta}_{i,r,k}=0$.
\item[(2)] For $r$ odd, we have $\theta_{i,r,k}\in \kappa_{r}^\imath \setminus \kappa_{r+1}^\imath$ and $\Phi^{-1}(\ov{\theta}_{i,r,k}) =2 h_i u^{r}$.
\item[(3)] We have  $\beta_{i,r,k}\in \kappa_r^\imath\setminus \kappa_{r+1}^\imath$ and $\Phi^{-1}(\ov{\beta}_{i,r,k})=\big( (-1)^r f_i -e_i \big)u^r$.
\end{itemize}
In particular, the preimages of $\ov{\theta}_{i,r,k},\ov{\beta}_{i,m,k}$ (for $r,m\in \N$ with $r$ odd) under $\Phi$ lie in the fixed point subalgebra $\g[u]^{\check\omega}$.
\end{proposition}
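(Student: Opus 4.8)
The plan is to analyze the explicit expressions in \eqref{climit2} by expanding them in the loop algebra basis and tracking which filtration layer $\kappa_m^\imath$ each term lands in. Recall from the definition preceding \eqref{filtration} that a monomial built from the generators $g_{r,k}=g(t-1)^r t^k$ lies in $\kappa_m$ precisely when the total $(t-1)$-degree is at least $m$; equivalently, under the isomorphism $\Phi$ of Proposition~\ref{prop:iso}, the symbol $\ov{g_{r,k}}$ corresponds to $gu^r$ and is nonzero in degree exactly $r$. So the whole computation reduces to rewriting each of $\theta_{i,r,k}$ and $\beta_{i,r,k}$ as a $\C$-linear combination of the $g_{s,l}$ and reading off the top surviving power of $(t-1)$.

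First I would treat $\beta_{i,r,k}$, since it is cleaner. Using $\beta_{i,r,k}=f_i(1-t)^r t^{-r-k}-e_i(t-1)^r t^k$ and the identity $g_{r,l}=g(t-1)^r t^l$, I rewrite this as $\beta_{i,r,k}=(-1)^r (f_i)_{r,-r-k} - (e_i)_{r,k}$, which is manifestly a combination of degree-$r$ generators and nothing higher. Hence $\beta_{i,r,k}\in\kappa_r^\imath$, and to see it is not in $\kappa_{r+1}^\imath$ I pass to $\Gr_\kappa$, where by \eqref{climit} the second index is immaterial, so its symbol is $\ov{(-1)^r(f_i)_{r,-r-k}-(e_i)_{r,k}}=\Phi\big(((-1)^r f_i - e_i)u^r\big)$; since $(-1)^r f_i - e_i\neq 0$ in $\g$ and $\Phi$ is an isomorphism, the symbol is nonzero, giving both $\beta_{i,r,k}\notin\kappa_{r+1}^\imath$ and the stated value of $\Phi^{-1}(\ov\beta_{i,r,k})$. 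This proves (3).

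Next I would do $\theta_{i,r,k}=h_i(t-1)^r t^k - h_i(1-t)^r t^{-r-k}=(h_i)_{r,k}-(-1)^r (h_i)_{r,-r-k}$. When $r$ is odd, the two terms add (the sign is $-(-1)^r=+1$), so the symbol in $\Gr_\kappa$ is $2\ov{(h_i)_{r,k}}=\Phi(2h_i u^r)$, which is nonzero; this gives (2) and the formula $\Phi^{-1}(\ov\theta_{i,r,k})=2h_i u^r$. When $r$ is even the two degree-$r$ terms cancel in $\Gr_\kappa$, so $\ov\theta_{i,r,k}=0$ and hence $\theta_{i,r,k}\in\kappa_{r+1}^\imath$, which is (1). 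The one point deserving care — and the only place a reader might object — is the even case: showing $\ov\theta_{i,r,k}=0$ in $\kappa_r^\imath/\kappa_{r+1}^\imath$ only tells us the symbol vanishes, so I must confirm this forces membership in the next layer $\kappa_{r+1}^\imath$ rather than merely some weaker cancellation. This is immediate from the definition of the associated graded, since $\ov x = 0$ in $\kappa_r/\kappa_{r+1}$ means exactly that $x\in\kappa_{r+1}$, and intersecting with $U(\g[t,t^{-1}]^\omega)$ keeps us inside $\kappa_{r+1}^\imath$; concretely one can also verify it by the identity $h_i(t-1)^r t^k - h_i(1-t)^r t^{-r-k}=h_i(t-1)^r(t^k - t^{-r-k})$ and noting $t^k-t^{-r-k}=(t-1)\cdot(\text{Laurent polynomial})$, which bumps the $(t-1)$-degree to $r+1$. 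The final ``in particular'' sentence then follows at once: for $r$ odd, $\Phi^{-1}(\ov\theta_{i,r,k})=2h_i u^r$ and $\Phi^{-1}(\ov\beta_{i,r,k})=((-1)^r f_i - e_i)u^r=(-f_i-e_i)u^r$ lie in $\g[u]^{\check\omega}$ because $\check\omega(h_i u^r)=(-1)^r\omega_0(h_i)u^r=(-1)^{r+1}h_i u^r = h_i u^r$ and $\check\omega((-f_i-e_i)u^r)=(-1)^r(e_i+f_i)u^r=(-1)^{r+1}(-f_i-e_i)u^r=(-f_i-e_i)u^r$ for $r$ odd, matching the defining formula \eqref{omegau]}. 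I expect the main obstacle to be purely bookkeeping: keeping the signs straight in the even-versus-odd split and justifying the passage from ``symbol vanishes'' to ``lies in $\kappa_{r+1}^\imath$,'' both of which are handled by the $(t-1)$-degree identity above.
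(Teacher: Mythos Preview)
Your proof is correct and follows exactly the approach the paper indicates (its proof is the one-liner ``Follows from \eqref{climit2} and the definition of $\Phi$''); you have simply written out the details that the paper leaves implicit. One small bookkeeping slip: in the final ``in particular'' check you verify $\check\omega$-invariance of $\Phi^{-1}(\ov\beta_{i,r,k})$ only for $r$ odd, whereas the statement allows arbitrary $m\in\N$ for $\beta$; the computation $\check\omega\big(((-1)^m f_i-e_i)u^m\big)=(-1)^m((-1)^m(-e_i)+f_i)u^m=((-1)^m f_i-e_i)u^m$ works uniformly for both parities, so this is trivially repaired.
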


\begin{proof}
Follows from \eqref{climit2} and the definition of $\Phi$ from Proposition~\ref{prop:iso}.
\end{proof}

\begin{corollary}
The following identities hold  in $\Gr_{\kappa^\imath}\g[t,t^{-1}]^\omega$: 
\[
\ov{\theta}_{i,r,k}=\ov{\theta}_{i,r,k+1},
\qquad 
\ov{\beta}_{i,m,k}=\ov{\beta}_{i,m,k+1},
\]
for $k\in \Z, r,m\in \N$ with $r$ odd.
\end{corollary}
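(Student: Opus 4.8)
The plan is to reproduce, at the level of the twisted generators, the elementary mechanism behind the untwisted identity \eqref{climit}: the defining expressions in \eqref{def:cgr} are iterated finite differences, so that passing from index $k$ to $k+1$ only perturbs a quantity living one filtration degree deeper.

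First I would record the two finite-difference recurrences
\begin{equation*}
\theta_{i,r+1,k} = \theta_{i,r,k+1} - \theta_{i,r,k}, \qquad \beta_{i,m+1,k} = \beta_{i,m,k+1} - \beta_{i,m,k},
\end{equation*}
valid for all $r,m\ge 0$, $k\in\Z$ and $i\in\I^0$. Each follows directly from the definition \eqref{def:cgr} together with Pascal's rule $\binom{r+1}{s} = \binom{r}{s} + \binom{r}{s-1}$, in exactly the same way as the identity $g_{r+1,k} = g_{r,k+1} - g_{r,k}$ used to establish \eqref{climit}; indeed $\theta_{i,r,\bullet}$ and $\beta_{i,r,\bullet}$ are precisely the $r$-th iterates of the forward-difference operator applied to $\theta_{i,\bullet}$ and $b_{i,\bullet}$ respectively.

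Second, I would invoke Proposition~\ref{prop:climit} (equivalently, read off \eqref{climit2}) to observe that $\theta_{i,r+1,k}\in\kappa_{r+1}^\imath$ and $\beta_{i,m+1,k}\in\kappa_{m+1}^\imath$. Combined with the recurrences, this yields $\theta_{i,r,k+1} - \theta_{i,r,k}\in\kappa_{r+1}^\imath$ and $\beta_{i,m,k+1} - \beta_{i,m,k}\in\kappa_{m+1}^\imath$. Since $\ov\theta_{i,r,k}$ and $\ov\beta_{i,m,k}$ denote the images in $\kappa_r^\imath/\kappa_{r+1}^\imath$ and $\kappa_m^\imath/\kappa_{m+1}^\imath$ respectively, these containments say exactly that $\ov\theta_{i,r,k} = \ov\theta_{i,r,k+1}$ and $\ov\beta_{i,m,k} = \ov\beta_{i,m,k+1}$, which is the claim. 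I note that this argument is insensitive to the parity of $r$; for $r$ even both sides already vanish by Proposition~\ref{prop:climit}(1), so the genuine content is the odd case as stated.

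As a cross-check, one may argue instead through the inclusion \eqref{GrGr}: using \eqref{climit2} to express $\theta_{i,r,k}$ as a combination of the untwisted elements $(h_i)_{r,\bullet}$ and applying \eqref{climit}, whose images are $k$-independent in $\Gr_\kappa\g[t,t^{-1}]$, one sees directly that $\ov\theta_{i,r,k}$ is independent of $k$, and likewise for $\beta$; the compatibility $\kappa_m^\imath = \kappa_m\cap U(\g[t,t^{-1}]^\omega)$ makes $\Gr_{\kappa^\imath}\g[t,t^{-1}]^\omega\hookrightarrow\Gr_\kappa\g[t,t^{-1}]$ injective, so the identities descend. The only point requiring care is the bookkeeping of filtration degrees — confirming that a unit shift in the index really costs one full degree — so I do not anticipate any substantive obstacle here.
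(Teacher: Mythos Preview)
Your proposal is correct and follows exactly the same approach as the paper: the paper's proof reads in full ``Follows by $\theta_{i,r+1,k}=\theta_{i,r,k+1}-\theta_{i,r,k}$, $\beta_{i,m+1,k}=\beta_{i,m,k+1}-\beta_{i,m,k}$ and Proposition~\ref{prop:climit},'' which is precisely your first two steps. Your cross-check via the inclusion \eqref{GrGr} is a valid alternative but is not needed.
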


\begin{proof}
Follows by $\theta_{i,r+1,k}=\theta_{i,r,k+1}-\theta_{i,r,k}$, $\beta_{i,m+1,k}=\beta_{i,m,k+1}-\beta_{i,m,k}$ and Proposition~\ref{prop:climit}.
\end{proof}

\subsection{A presentation for $U\big(\g[u]^{\check\omega}\big)$}

Throughout $\Sym_{k_1,k_2,k_3}$ denotes the symmetrization among indices $\{k_1,k_2,k_3\}$ (i.e., a sum of $3!$ terms), and $\text{Cyc}_{k_1,k_2,k_3}$ denotes a sum over the 3 cyclic permutations of indices $(k_1,k_2,k_3).$ Notations $\Sym_{k_1,k_2}$, $\Sym_{w_1,w_2}$ and so on are understood similarly. Recall the Cartan matrix $C^0=(c_{ij})_{i,j\in\mathbb{I}^0}$ for $\g$. 

\begin{proposition}
  \label{prop:crel}
The algebra $U(\g[u]^{\check\omega})$ is generated by $t_{i,r},x_{i,m}$ for $i\in \I^0,r,m\geq 0$ with $r$ odd, subject to the relations \eqref{eq:classical1}--\eqref{eq:classical3}, for $i,j\in\I^0, m,k, r,s\in \Z_{\ge 0}$ with $r,s$ odd:
\begin{align}
\label{eq:classical1}
[t_{i,r},t_{j,s}] &=0,
\\
\label{eq:classical2}
[t_{i,r},x_{j,m}] &=c_{ij} x_{j,m+r},
\\
\label{eq:classical3}
[x_{i,k},x_{j,m+1}]-[x_{i,k+1},x_{j,m}] &=\delta_{ij}2 \big((-1)^k+(-1)^m\big)t_{i,k+m+1},
\end{align}
and the Serre relations \eqref{eq:classical4'}--\eqref{eq:classical6}:  
\begin{align}\label{eq:classical4'}
[x_{i,k},x_{j,m}] &=0, \qquad\qquad\qquad\qquad \text{for } c_{ij}=0;
\\
\begin{split}
\Sym_{k_1,k_2}\big[x_{i,k_1},[x_{i,k_2},x_{j,m}]\big] 
&= \big((-1)^{k_1+1}+(-1)^{k_2+1}\big)x_{j,k_1+k_2+m}, \\
&\qquad\qquad\qquad\qquad\qquad \text{ for } c_{ij}=-1;
\label{eq:classical4}
\end{split}
\\
\begin{split}
\label{eq:classical5}
\Sym_{k_1,k_2,k_3}\Big[x_{i,k_1},\big[x_{i,k_2},[x_{i,k_3},x_{j,m}]\big]\Big] 
&= 4\Sym_{k_1,k_2,k_3}  (-1)^{k_1+1} [x_{i,k_1+k_2+k_3},x_{j,m}], 
\\
&\qquad\qquad\qquad\qquad\qquad \text{ for } c_{ij}=-2;
\end{split}
\end{align}
and
\begin{align}\label{eq:classical6}
\begin{split}
&\Sym_{k_1,k_2,k_3,k_4}\bigg[x_{i,k_1},\Big[x_{i,k_2},\big[x_{i,k_3},[x_{i,k_4},x_{j,m}]\big]\Big]\bigg]
\\
&= 10\Sym_{k_1,k_2,k_3,k_4}  (-1)^{k_2+1} \big[x_{i,k_1},[x_{i, k_2+k_3+k_4},x_{j,m}]\big]
\\
&\quad -9 \Sym_{k_1,k_2,k_3,k_4} (-1)^{k_1+k_2} x_{j,k_1+k_2+k_3+k_4+m},
\quad \text{ for } c_{ij}=-3.
\end{split}
\end{align}
\end{proposition}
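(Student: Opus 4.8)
The plan is to identify the abstract algebra $A$ presented by the generators $t_{i,r},x_{i,m}$ and relations \eqref{eq:classical1}--\eqref{eq:classical6} with $U(\g[u]^{\check\omega})$ through the explicit assignment
\[
t_{i,r}\longmapsto h_i u^r,\qquad x_{i,m}\longmapsto \big((-1)^m f_i-e_i\big)u^m,
\]
which is dictated by Proposition~\ref{prop:climit}: these are exactly the $\check\omega$-fixed elements of $\g[u]$ of the relevant $u$-degree. Since every listed relation is a bracket relation, presenting $A$ as an associative algebra is the same as presenting the Lie algebra $\g[u]^{\check\omega}$ by the same data, so by the standard description of enveloping algebras it suffices to prove the Lie-algebra presentation and then pass to $U$. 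A basis of $\g[u]^{\check\omega}$ is furnished by $\{h_iu^r\mid r\text{ odd}\}$ together with the root vectors $x_{\beta,r}:=\big((-1)^r f_\beta-e_\beta\big)u^r$ for $\beta\in\cR^+,r\in\N$, whence the target carries a PBW basis of ordered monomials in these elements.

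First I would check that the assignment respects \eqref{eq:classical1}--\eqref{eq:classical6}, giving a homomorphism $\Psi\colon A\to U(\g[u]^{\check\omega})$. The non-Serre relations are immediate $\mathfrak{sl}_2$-computations; for instance \eqref{eq:classical3} reduces to $[e_i,f_i]=h_i$ together with the parity of the $u$-degree. For the Serre relations \eqref{eq:classical4}--\eqref{eq:classical6} I would expand the iterated brackets of the $x$'s into their $e$- and $f$-components: the top bracket terms vanish by the finite-type Serre relations $(\ad e_i)^{1-c_{ij}}e_j=0$ and $(\ad f_i)^{1-c_{ij}}f_j=0$, while the lower terms on the right-hand sides are produced by the cross terms in which $\ad e_i$ and $\ad f_i$ alternate, evaluated using $[e_i,f_i]=h_i$ and $[h_i,e_j]=c_{ij}e_j$. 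This is finite but tedious, and most involved when $c_{ij}=-3$.

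Next, surjectivity. I would show the listed images generate $\g[u]^{\check\omega}$ by induction on the height of $\beta\in\cR^+$, reconstructing each $x_{\beta,r}$ from a bracket $[x_{i_1,0},x_{\beta',r}]$ exactly as in the proof of Lemma~\ref{lem:loop}(1); the Cartan generators $h_iu^r$ (with $r$ odd) are then recovered from \eqref{eq:classical3}.

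Finally, injectivity, which is the heart of the matter. Using the relations I would build iterated-bracket lifts $X_{\beta,r}\in A$, one for each $\beta\in\cR^+,r\in\N$ by a fixed reduced-word recipe, with $\Psi(X_{\beta,r})$ a nonzero multiple of $x_{\beta,r}$, and then prove that $A$ is spanned by the ordered monomials $S$ in $\{t_{i,r},X_{\beta,r}\}$ relative to a fixed total order. For this I would show $\mathrm{span}(S)$ is closed under left multiplication by the generators: commuting a generator past an $X_{\beta,r}$ or $t_{i,r}$ produces a bracket which must be rewritten, via the relations, as a combination of $X_{\gamma,s}$'s and $t$'s. Establishing these bracket identities inside $A$---in particular that a bracket never yields a vector outside the span of the genuine root vectors---is exactly where the current Serre relations \eqref{eq:classical4}--\eqref{eq:classical6} are indispensable, since they truncate the $i$-root string through $j$. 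Once $S$ is known to span $A$, injectivity follows at once: $\Psi(S)$ is, up to nonzero scalars, the PBW basis of $U(\g[u]^{\check\omega})$ and hence linearly independent, so $S$ is a basis and $\Psi$ an isomorphism. I expect the principal obstacle to be organizing this straightening so that it terminates---by an induction on total $u$-degree, monomial length, and a disorder measure---and verifying the requisite bracket formulas, with the $c_{ij}=-3$ case of \eqref{eq:classical6} the most delicate.
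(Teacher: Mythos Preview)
Your outline is sound and would work, but the injectivity step you sketch---a direct straightening of monomials in $A$ by repeatedly applying the relations and tracking a disorder measure---is exactly the part the paper avoids with a cleaner device. The paper puts a filtration on the abstract algebra $\mathcal L$ by declaring $\deg x_{i,m}=m+1$ and $\deg t_{i,r}=r+1$. In the associated graded $\overline{\mathcal L}$, the right-hand sides of \eqref{eq:classical2}--\eqref{eq:classical6} all drop out (they have strictly lower degree), so the images $\breve t_{i,r}$ become central relative to the $\breve x_{j,m}$, and the $\breve x$'s satisfy precisely the \emph{untwisted} current Serre relations with zero right-hand side. Thus $\overline{\mathcal L}$ is a quotient of $\C[\breve t_{i,r}]\otimes U(\mathfrak n^-[u])$, whose PBW basis is already available; this immediately yields the spanning of $\mathcal L$ by ordered monomials in $t_{i,r}$ and the iterated-bracket root vectors $x_{\alpha,m}$, with no straightening bookkeeping and no separate treatment of $c_{ij}=-3$.

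What your approach buys is conceptual directness---you never leave the original algebra---but at the cost of the bracket identities you flag (showing $[X_{\beta,r},X_{\gamma,s}]$ lands back in the span of root vectors and $t$'s inside $A$), which in the twisted setting carry nonzero lower-order terms and are genuinely painful for $c_{ij}=-3$. The paper's filtration trick trades those identities for the known PBW theorem of the ordinary current algebra $\mathfrak n^-[u]$, which is a substantial simplification and worth internalizing: the same device reappears later in the paper in the proofs of Proposition~\ref{prop:PBWbasis} and Theorem~\ref{thm:PBWbasisG2}.
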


\begin{proof}
Let $\L$ be the algebra defined with generators $t_{i,r},x_{i,m}$ (for $r$ odd) and defining relations \eqref{eq:classical1}--\eqref{eq:classical6}. There is a homomorphism  
\begin{align} \label{eq:iso}
\begin{split}
    \rho: \L & \longrightarrow U(\g[u]^{\check\omega})
    \\
t_{i,r} & \mapsto h_i u^{r}, \quad x_{i,m}\mapsto \big( (-1)^m f_i - e_i \big)u^m. 
\end{split}
\end{align}
Indeed it is straightforward though tedious to check that the relations \eqref{eq:classical1}--\eqref{eq:classical6} are satisfied for the images in \eqref{eq:iso} in $U(\g[u]^{\check\omega})$.

For each $\alpha\in \cR^+$, we fix $i_1,\ldots, i_k\in \I^0$ such that $f_{\alpha}:=\big[f_{i_1},[f_{i_2},\cdots[f_{i_{k-1}},f_{i_k}]\cdots]\big]$ is a nonzero root vector in $\g_{-\alpha}$. Define, for $m\geq0$,
\begin{align}\label{def:xam}
x_{\alpha,m}=\Big[x_{i_1,0},\big[x_{i_2,0},\cdots[x_{i_{k-1},0},x_{i_k,m}]\cdots\big]\Big].
\end{align}
We claim that the $\rho$-images of $t_{i,r},x_{\alpha,m}$ form a basis of $\g[u]^{\check\omega}$. Set $e_\alpha=-\omega_0(f_\alpha)$. By definition, we have
\begin{align*}
\rho(x_{\alpha,n}) \in \big( (-1)^m f_\alpha - e_\alpha \big)u^m + \sum_{\alpha': \text{ht}(\alpha')<\text{ht}(\alpha)} \C \big( (-1)^m f_{\alpha'} - e_{\alpha'} \big)u^m,
\end{align*}
where $\text{ht}(\alpha)$ denotes the height of $\alpha$. Since $\{( (-1)^m f_\alpha - e_\alpha )u^m, h_i u^{2m+1} \mid i\in \I^0,\alpha\in \cR^+,m\geq 0 \}$ clearly form a basis of $\g[u]^{\check\omega}$, the $\rho$-images of $t_{i,r},x_{\alpha,m}$ (for $r$ odd) also form a basis of $\g[u]^{\check\omega}$. By the PBW theorem, the monomials in $\rho$-images of $t_{i,r},x_{\alpha,m}$ with respect to any fixed order form a basis of $U(\g[u]^{\check\omega})$.

It remains to show that the monomials of $t_{i,r},x_{\alpha,m}$ $(i\in \I^0, \alpha \in \cR^+, r,m \in \Z_{\ge 0}$ with $r$ odd) with respect to any fixed order form a spanning set of $\L$; this implies that $\rho$ is an isomorphism and hence proves the proposition.

Define a filtration on $\L$ by setting $\deg x_{i,m}=m+1,\deg t_{i,r}=r+1$. Denote by $\ov{\L}$ the associated graded algebra and denote by $\breve{t}_{i,r},\breve{x}_{j,m}$, $\breve{x}_{\alpha,m}$ the images of $t_{i,r},x_{j,m}, x_{\alpha,m}$ in $\ov{\L}$. Then we have that 
$[\breve{t}_{i,r},\breve{x}_{j,m}]=0$ in $\ov{\L}$ by \eqref{eq:classical2}, and hence $\ov{\L}$ is isomorphic to (a quotient of) $\C[\breve{t}_{i,r}|i\in \I^0,r\in 2\N+1]\otimes \ov{\L}^+$, where $\ov{\L}^+$ is the subalgebra of $\ov{\L}$ generated by all $\breve{x}_{j,m}$ and it satisfies the following relations (which are implied by \eqref{eq:classical3}--\eqref{eq:classical6}):
\begin{align*}
[\breve{x}_{i,s},\breve{x}_{j,r+1}]-[\breve{x}_{i,s+1},\breve{x}_{j,r}]&=0,
\\
[\breve{x}_{i,k},\breve{x}_{j,r}]&=0 \qquad (c_{ij}=0),
\\
\Sym_{k_1,k_2}\big[\breve{x}_{i,k_1},[\breve{x}_{i,k_2},\breve{x}_{j,r}]\big] &=0\qquad (c_{ij}=-1),
\\
\Sym_{k_1,k_2,k_3}\Big[\breve{x}_{i,k_1},\big[\breve{x}_{i,k_2},[\breve{x}_{i,k_3},\breve{x}_{j,r}]\big]\Big] & =0\qquad (c_{ij}=-2),
\\
\Sym_{k_1,k_2,k_3,k_4}\bigg[\breve{x}_{i,k_1},\Big[\breve{x}_{i,k_2},\big[\breve{x}_{i,k_3},[\breve{x}_{i,k_4},\breve{x}_{j,r}]\big]\Big]\bigg]&=0\qquad (c_{ij}=-3).
\end{align*}
Hence, $\ov{\L}^+$ is identified with (a quotient of) the current algebra $U(\n^-[u])$ where $\n^- =\oplus_{\alpha \in \cR^+}\g_{-\alpha}$. By the PBW theorem for the current algebra, the ordered monomials of $\breve{x}_{\alpha,m}$ form a spanning set of  $\ov{\L}^+$. 
\end{proof}

\subsection{From twisted loop algebras to twisted current algebras}\label{dege:notwisted}

Following the definition of $x_{\alpha,m}$ and the notation preceding \eqref{def:xam}, we define $\beta_{\alpha,m,l}$ for $\alpha\in \cR^+, m\geq 0, l\in \Z$ as
\begin{align} \label{def:betam}
\beta_{\alpha,m,l}=\Big[\beta_{i_1,0,0},\big[\beta_{i_2,0,0},\cdots[\beta_{i_{k-1},0,0},\beta_{i_k,m,l}]\cdots\big]\Big].
\end{align}

\begin{proposition}
There is an algebra isomorphism $\Phi_\imath: U(\g[u]^{\check\omega})\rightarrow\Gr_{\kappa^\imath}\g[t,t^{-1}]^\omega$ such that
\begin{align}
2t_{i,r} \mapsto \ov{\theta}_{i,r,1},\qquad x_{i,m}\mapsto\ov{\beta}_{i,m,1} \qquad (i\in \I^0, r,m\geq 0 \text{ with } r \text{ odd}).
\end{align}
Moreover, we have the following commutative diagram:
\[
\xymatrix{ U(\g[u]^{\check\omega}) \ar[r]^{\Phi_\imath} \ar[d]^{}&  \Gr_{\kappa^\imath}\g[t,t^{-1}]^\omega \ar[d]^{}\\
U(\g[u]) \ar[r]^{\Phi} & \Gr_{\kappa}\g[t,t^{-1}]}
 \]
where the vertical arrows are natural inclusions, cf. \eqref{GrGr}.
\end{proposition}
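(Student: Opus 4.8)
The plan is to realize $\Phi_\imath$ as the restriction of the isomorphism $\Phi$ of Proposition~\ref{prop:iso} to the subalgebra $U(\g[u]^{\check\omega})\subseteq U(\g[u])$, and then to verify that this restriction is a bijection onto the subalgebra $\Gr_{\kappa^\imath}\g[t,t^{-1}]^\omega\subseteq \Gr_{\kappa}\g[t,t^{-1}]$ of \eqref{GrGr}. First I would check well-definedness. Under the presentation of Proposition~\ref{prop:crel}, $U(\g[u]^{\check\omega})$ is generated by the elements $h_iu^r$ ($r$ odd) and $\big((-1)^mf_i-e_i\big)u^m$; by Proposition~\ref{prop:climit}(2)--(3) these satisfy $\Phi(2h_iu^r)=\ov{\theta}_{i,r,1}$ and $\Phi\big(((-1)^mf_i-e_i)u^m\big)=\ov{\beta}_{i,m,1}$, both of which lie in $\Gr_{\kappa^\imath}\g[t,t^{-1}]^\omega$. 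As the latter is a subalgebra, $\Phi$ carries all of $U(\g[u]^{\check\omega})$ into it, so $\Phi_\imath:=\Phi|_{U(\g[u]^{\check\omega})}$ is well-defined with the asserted values $2t_{i,r}\mapsto\ov{\theta}_{i,r,1}$ and $x_{i,m}\mapsto\ov{\beta}_{i,m,1}$ on generators. The commutative square then holds tautologically, since $\Phi_\imath$ is a corestriction of $\Phi$ and both vertical maps are inclusions, and $\Phi_\imath$ is injective because $\Phi$ is.

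It remains to prove surjectivity. By Proposition~\ref{prop:crel}, the ordered monomials in $t_{i,r}$ and $x_{\alpha,m}$ form a basis of $U(\g[u]^{\check\omega})$. Since $\Phi_\imath$ is a homomorphism, the nested-bracket definitions \eqref{def:xam} and \eqref{def:betam}, combined with $\ov{\beta}_{i,0,0}=\ov{\beta}_{i,0,1}$ from the Corollary, yield $\Phi_\imath(x_{\alpha,m})=\ov{\beta}_{\alpha,m,1}$; hence $\Phi_\imath$ maps this basis bijectively onto the family of ordered monomials in $\tfrac12\ov{\theta}_{i,r,1}$ and $\ov{\beta}_{\alpha,m,1}$, which are thus linearly independent in $\Gr_{\kappa^\imath}\g[t,t^{-1}]^\omega$. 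Surjectivity therefore reduces to showing that these monomials span $\Gr_{\kappa^\imath}\g[t,t^{-1}]^\omega$.

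For the spanning statement I would prove that $\kappa^\imath_m=\kappa_m\cap U(\g[t,t^{-1}]^\omega)$ coincides with the filtration on $U(\g[t,t^{-1}]^\omega)$ induced from the $(t-1)$-adic filtration $\mathcal F_m=\g\,(t-1)^m\C[t,t^{-1}]$ of the Lie algebra. The crucial input is that $\omega$ preserves the $\kappa$-filtration: from \eqref{xrk} one computes $\omega(g_{r,k})=(-1)^r(\omega_0(g))_{r,-r-k}$, so every $\mathcal F_m$, and hence every $\kappa_m$, is $\omega$-stable and splits into $\pm1$-eigenspaces compatibly with the filtration. I would then choose a filtration-homogeneous basis of $\g[t,t^{-1}]$ made of $\omega$-eigenvectors; the $\omega$-fixed vectors among them form a filtration-homogeneous basis of $\g[t,t^{-1}]^\omega$, and the associated PBW monomials give a basis of $U(\g[t,t^{-1}]^\omega)$ adapted to the $\kappa$-filtration, the filtration degree of a monomial being the sum of the degrees of its factors. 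Expanding any $X\in\kappa_m\cap U(\g[t,t^{-1}]^\omega)$ in this basis shows that only monomials of degree $\geq m$ occur, which is exactly the claimed identification of filtrations. It follows that $\Gr_{\kappa^\imath}\g[t,t^{-1}]^\omega\cong U\big(\mathrm{gr}^{\mathcal F}\g[t,t^{-1}]^\omega\big)$, and the same eigenspace compatibility identifies $\mathrm{gr}^{\mathcal F}\g[t,t^{-1}]^\omega$ with $\g[u]^{\check\omega}$, the involution induced by $\omega$ on $\mathrm{gr}^{\mathcal F}\g[t,t^{-1}]=\g[u]$ being $\check\omega$ because of the sign $(-1)^r$ above. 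Since $\g[u]^{\check\omega}$ is spanned by the leading symbols $h_iu^r$ ($r$ odd) and $\big((-1)^mf_\alpha-e_\alpha\big)u^m$ of $\theta_{i,r,1}$ and $\beta_{\alpha,m,1}$, the monomials in $\ov{\theta}_{i,r,1}$ and $\ov{\beta}_{\alpha,m,1}$ span each graded piece $\kappa^\imath_m/\kappa^\imath_{m+1}$, giving surjectivity.

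The main obstacle is exactly this last identification of the intersection filtration $\kappa_m\cap U(\g[t,t^{-1}]^\omega)$ with the induced one, equivalently the fact that forming the associated graded commutes here with passing to $\omega$-fixed points. This is not automatic: in general the associated graded of a subalgebra is strictly smaller than the fixed-point subalgebra of the associated graded, and indeed $U(\g[u]^{\check\omega})$ is a proper subalgebra of $U(\g[u])^{\check\omega}$ (for instance $h_i^2$ lies in the latter but not the former). What makes the argument go through is the $\omega$-stability of the $\kappa$-filtration together with the characteristic-zero eigenspace splitting, which permits a PBW basis respecting both the grading and the involution.
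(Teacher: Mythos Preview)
Your argument is correct, and the route you take is genuinely different from the paper's. Both proofs obtain $\Phi_\imath$ as the restriction of $\Phi$ and deduce injectivity immediately; the divergence is in establishing surjectivity. The paper works concretely: it introduces an auxiliary ``top Laurent degree'' filtration on $U(\g[t,t^{-1}]^\omega)$ (setting $\deg\theta_{i,r}=r$, $\deg\beta_{i,k}=k$) to produce an explicit PBW basis $\{\theta_{i,r,1},\beta_{\alpha,m,1},\beta_{\alpha,0,l}\mid r,m\geq 0,\,l\leq 0\}$ adapted to $\kappa^\imath$, and then observes that in the associated graded the extra elements $\ov{\beta}_{\alpha,0,l}$ collapse to $\ov{\beta}_{\alpha,0,1}$ while $\ov{\theta}_{i,r,1}$ vanishes for even $r$. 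Your approach is more structural: the identity $\omega(g_{r,k})=(-1)^r(\omega_0(g))_{r,-r-k}$ shows $\omega$ preserves the $(t-1)$-adic filtration $\mathcal F$, so in characteristic zero the eigenspace splitting $\mathcal F_m=\mathcal F_m^+\oplus\mathcal F_m^-$ is compatible with the filtration, and a PBW argument with an eigenbasis adapted to $\mathcal F$ identifies $\kappa_m\cap U(\g[t,t^{-1}]^\omega)$ with the filtration induced from $\mathcal F^\omega$; taking associated graded then gives $U(\g[u]^{\check\omega})$ directly, with the induced involution being $\check\omega$ by the sign $(-1)^r$. Your approach cleanly isolates the conceptual reason the argument works (involution-stability of the filtration plus characteristic zero), and would generalize painlessly to other order-$d$ automorphisms; the paper's hands-on basis has the advantage of being reused later (compare the basis of $\bK_m/\bK_{m+1}$ in the proof of Theorem~\ref{thm:qiso}).
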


\begin{proof}
By Proposition~\ref{prop:climit}, $\Phi$ maps $2t_{i,r},x_{i,m}$ to $\ov{\theta}_{i,r,1},\ov{\beta}_{i,m,1}\in \Gr_{\kappa^\imath}\g[t,t^{-1}]^\omega$, respectively. By Proposition~\ref{prop:crel}, $t_{i,r},x_{i,m}$ generates $U(\g[u]^{\check\omega})$ and hence $\Phi$ restricts to a map $\Phi_\imath:U(\g[u]^{\check\omega}) \rightarrow \Gr_{\kappa^\imath}\g[t,t^{-1}]^\omega$. Since $\Phi$ is an isomorphism, $\Phi_\imath$ is injective.

It remains to show the surjectivity for $\Phi_\imath$. It suffices to show that the set $\{\ov{\theta}_{i,r,1},\ov{\beta}_{i,m,1}| i\in \I^0,m\geq 0,r\in 2\N+1\}$ generates $\Gr_{\kappa^\imath}\g[t,t^{-1}]^\omega$. 
By the proof of Lemma~\ref{lem:loop}, the monomials of $\theta_{i,r},b_{\alpha,k}, r\geq 1,k\in \Z,i\in \I^0,\alpha\in \cR^+$ for some fixed order form a PBW basis for $U(\g[t,t^{-1}]^\omega)$.

We define an $\N$-filtration on $U(\g[t,t^{-1}]^\omega)$ by setting $\deg \theta_{i,r}=r, \deg b_{i,k}=\deg b_{i,-k}=k$ for $r>0,k\geq 0$. Then $\deg b_{\alpha,k}=\deg b_{\alpha,-k}=k$ for $\alpha\in \cR^+$. Denote by $\mathcal{G}$ the associated graded algebra and denote by $\breve{\theta}_{i,r},\breve{b}_{\alpha,k}$ the images of $\theta_{i,r},b_{\alpha,k}$ in $\mathcal{G}$. By \eqref{def:cgr}, we have 
\begin{align*}
\breve{\theta}_{i,r,1}=\breve{\theta}_{i,r+1},\qquad
\breve{\beta}_{\alpha,m,1}=\breve{b}_{\alpha,m+1},\qquad
\breve{\beta}_{\alpha,0,l}=\breve{b}_{\alpha,l},
\qquad r,m \geq 0, l\leq 0, i\in \I^0,\alpha\in \cR^+.
\end{align*}
Then, by the above PBW basis for $U(\g[t,t^{-1}]^\omega)$,
the ordered monomials of $\{\breve{\theta}_{i,r,1},\breve{\beta}_{\alpha,m,1},\breve{\beta}_{\alpha,0,l}|$
$i\in \I^0, r,m \geq 0,l\leq 0,\alpha\in \cR^+ \}$ form a basis for $\mathcal{G}$. Hence, the ordered monomials of 
\[
\{ \theta_{i,r,1},\beta_{\alpha,m,1},\beta_{\alpha,0,l}|i\in \I^0, r,m \geq 0,l\leq0,\alpha\in \cR^+ \}
\]
form a basis for $U(\g[t,t^{-1}]^\omega) $. By Proposition~\ref{prop:climit}, such a monomial lies in $\kappa^\imath_{m_0}$ if and only if the sum of the second indices is at least $m_0$; then this subset of ordered monomials forms a spanning set for $\kappa^\imath_{m_0}$. Thus, $\ov{\theta}_{i,r,1},\ov{\beta}_{i,m,1},\ov{\beta}_{i,0,l}$ generate $\Gr_{\kappa^\imath}\g[t,t^{-1}]^\omega$. Since $\ov{\beta}_{i,0,l}=\ov{\beta}_{i,0,1}$ and $\ov{\theta}_{i,r,1}=0$ for $r$ even, the surjectivity for $\Phi_\imath$ follows.  
\end{proof}

\subsection{A reduced presentation for $U\big(\g[u]^{\check\omega}\big)$}

\begin{lemma}\label{lem:classical}
For $s,r$ even and $i\in \I^0$, we have $[x_{i,s},x_{i,r}]=0$.
\end{lemma}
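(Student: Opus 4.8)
The plan is to sidestep the combinatorics of the current Serre relations entirely and argue through the explicit realization furnished by Proposition~\ref{prop:crel}. Recall the isomorphism $\rho:\L \stackrel{\cong}{\longrightarrow} U(\g[u]^{\check\omega})$ of \eqref{eq:iso}, which sends $x_{i,m}\mapsto \big((-1)^m f_i - e_i\big)u^m$. The key observation is that when $m$ is even this image collapses to $(f_i-e_i)u^m$, i.e. a fixed Lie-algebra element $f_i-e_i$ scaled by a power of $u$, with no sign ambiguity. First I would transport the bracket $[x_{i,s},x_{i,r}]$ through $\rho$ into the current algebra, compute it there, and then read the result back using injectivity of $\rho$.

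Concretely, for $s,r$ even the computation is immediate:
\[
\big[\rho(x_{i,s}),\rho(x_{i,r})\big]=\big[(f_i-e_i)u^s,(f_i-e_i)u^r\big]=[f_i-e_i,\,f_i-e_i]\,u^{s+r}=0,
\]
since any element of a Lie algebra brackets trivially with itself and the current-algebra bracket is $[gu^s,hu^r]=[g,h]u^{s+r}$. Because $\rho$ is an isomorphism, this forces $[x_{i,s},x_{i,r}]=0$ in $\L\cong U(\g[u]^{\check\omega})$, which is exactly the assertion.

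Along this route there is no genuine obstacle: all the work has already been absorbed into the identification of $\rho$ as an isomorphism in Proposition~\ref{prop:crel}. For contrast, one could instead attempt a purely relation-theoretic derivation from \eqref{eq:classical3}: setting $i=j$, $k=a$, $m=b-1$ and using $(-1)^{b-1}=-(-1)^b$ yields $[x_{i,a},x_{i,b}]-[x_{i,a+1},x_{i,b-1}]=2\big((-1)^a-(-1)^b\big)t_{i,a+b}$, whose right-hand side vanishes whenever $a$ and $b$ share the same parity. Iterating this parity-preserving shift would reduce $[x_{i,s},x_{i,r}]$ (for $s,r$ even) to the base bracket $[x_{i,s+r},x_{i,0}]$. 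The hard part of that alternative would be the final step of showing these base brackets vanish, which is not visible from \eqref{eq:classical3} alone and would require feeding in the Serre relations or the realization anyway. Since $\rho$ dispatches precisely this point for free, I would present the isomorphism argument above.
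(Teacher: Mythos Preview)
Your main argument via the isomorphism $\rho$ is correct and proves the lemma as stated. It is, however, a different route from the paper's. The paper's proof is purely relation-theoretic: from \eqref{eq:classical3} with $i=j$ one gets $[x_{i,a},x_{i,b}]=[x_{i,a+1},x_{i,b-1}]$ whenever $a,b$ have the same parity, and iterating (say with $s\le r$) gives
\[
[x_{i,s},x_{i,r}]=[x_{i,s+1},x_{i,r-1}]=\cdots=[x_{i,(s+r)/2},x_{i,(s+r)/2}]=0.
\]
Your remark that the relation-only approach would stall at a ``base bracket'' $[x_{i,s+r},x_{i,0}]$ is mistaken: the same parity-preserving shift keeps applying until the two indices meet in the middle, so \eqref{eq:classical3} alone already forces the vanishing. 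No appeal to $\rho$ or to any Serre relation is needed.

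This difference matters for how the lemma is used. In the proof of Proposition~\ref{prop:reduced relation} one works in the algebra presented by \eqref{eq:classical1}--\eqref{eq:classical3} together with only the \emph{finite type} Serre relations, and must derive the full current Serre relations there; the commutation $[x_{i,s},x_{i,r}]=0$ is invoked in that abstract algebra. The paper's argument transfers verbatim to that setting because it uses only \eqref{eq:classical3}. Your argument, by contrast, lives in $U(\g[u]^{\check\omega})$ via $\rho$ and presupposes the full presentation of Proposition~\ref{prop:crel}, so it would not directly justify that later application. In short: your proof is cleaner for the lemma in isolation, but the paper's proof is the one that carries the weight downstream.
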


\begin{proof}
%
We can assume that $s\ge r$ and both are even. The statement for $s=r$ is trivial. By \eqref{eq:classical3}, we have $[x_{i,s},x_{i,r}]=[x_{i,s-1},x_{i,r+1}]$, and the statement follows by a simple induction on $s-r$.
\end{proof}

We now give a reduced presentation for the algebra $U(\g[u]^{\check\omega})$ which has relations simpler than those in Proposition \ref{prop:crel}, i.e., we use the finite type Serre relations to replace the current Serre relations.

\begin{proposition}
\label{prop:reduced relation}
The algebra $U(\g[u]^{\check\omega})$ is generated by $t_{i,r},x_{i,m}$ for $i\in \I^0,r,m\geq 0$ with $r$ odd, subject to the relations \eqref{eq:classical1}--\eqref{eq:classical3} and the finite type Serre relations \eqref{eq:classical7a}--\eqref{eq:classical9} below:
\begin{align}
\label{eq:classical7a}
 [x_{i,0},x_{j,0}] 
&=0 \qquad\qquad\qquad\;\; \hskip 3cm (c_{ij}=0),
\\
\label{eq:classical7}
 \big[x_{i,0},[x_{i,0},x_{j,0}]\big] 
&= -x_{j,0} \qquad\qquad\; \hskip 2.9cm (c_{ij}=-1),
\\
\label{eq:classical8}
 \Big[x_{i,0},\big[x_{i,0},[x_{i,0},x_{j,0}]\big]\Big] &= -4  [x_{i,0},x_{j,0}] \qquad \hskip 2.65cm (c_{ij}=-2), 
\\
\label{eq:classical9}
 \bigg[x_{i,0},\Big[x_{i,0},\big[x_{i,0},[x_{i,0},x_{j,0}]\big]\Big]\bigg]
&= -10   \big[x_{i,0},[x_{i, 0},x_{j,0}]\big]
-9 x_{j,0}\qquad (c_{ij}=-3).
\end{align}
\end{proposition}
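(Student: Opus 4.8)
The plan is to leverage the presentation already established in Proposition~\ref{prop:crel}. Write $\L$ for the algebra presented by \eqref{eq:classical1}--\eqref{eq:classical6} and $\L'$ for the algebra presented by \eqref{eq:classical1}--\eqref{eq:classical3} together with the finite type Serre relations \eqref{eq:classical7a}--\eqref{eq:classical9}. Since $\L \cong U(\g[u]^{\check\omega})$ by Proposition~\ref{prop:crel}, it suffices to construct mutually inverse homomorphisms between $\L$ and $\L'$ fixing the generators $t_{i,r},x_{i,m}$; this forces $\L'\cong\L\cong U(\g[u]^{\check\omega})$.

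One direction is immediate. The finite type Serre relations are exactly the specializations of the current Serre relations at all shift indices equal to $0$: setting $k_1=k_2=m=0$ in \eqref{eq:classical4}, and noting that $\Sym_{0,0}$ doubles while $(-1)^{0+1}+(-1)^{0+1}=-2$, recovers \eqref{eq:classical7}; likewise \eqref{eq:classical5} and \eqref{eq:classical6} at zero indices give \eqref{eq:classical8} and \eqref{eq:classical9}, and \eqref{eq:classical4'} gives \eqref{eq:classical7a}. Hence relations \eqref{eq:classical7a}--\eqref{eq:classical9} hold in $\L$, so there is a surjection $\psi:\L'\to\L$ fixing the generators.

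The substance is the reverse direction: I would show that the current Serre relations \eqref{eq:classical4'}--\eqref{eq:classical6} are forced in $\L'$ by \eqref{eq:classical1}--\eqref{eq:classical3} and \eqref{eq:classical7a}--\eqref{eq:classical9}, which yields a surjection $\phi:\L\to\L'$ fixing the generators; then $\phi,\psi$ are mutually inverse and we are done. To derive a current Serre relation in $\L'$, I would first use relation \eqref{eq:classical3}, which for $i\neq j$ reads $[x_{i,k},x_{j,m+1}]=[x_{i,k+1},x_{j,m}]$, so that the innermost bracket depends only on the sum of its two indices; this lets me push all $x_j$-shifts down to $0$ and reduce the relation to an identity among the elements $[x_{i,a},[x_{i,b},x_{j,0}]]$. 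The base case is the finite type relation (e.g.\ $[x_{i,0},[x_{i,0},x_{j,0}]]=-x_{j,0}$ for $c_{ij}=-1$), and I would raise the total index by induction using the derivation $\ad t_{i,1}$: from \eqref{eq:classical2} one has $[t_{i,1},x_{i,k}]=2x_{i,k+1}$ and $[t_{i,1},x_{j,m}]=c_{ij}x_{j,m+1}$, and a direct check shows that both sides of the current Serre relation obey the \emph{same} linear recursion under $\ad t_{i,1}$.

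The main obstacle is that this recursion leaves the symmetric part of $[x_{i,a},[x_{i,b},x_{j,0}]]$ underdetermined by one scalar at each level. I expect to supply the missing relation from the antisymmetric part: by the Jacobi identity, $[x_{i,a},[x_{i,b},x_{j,0}]]-[x_{i,b},[x_{i,a},x_{j,0}]]=\big[[x_{i,a},x_{i,b}],x_{j,0}\big]$, and the $i=j$ case of \eqref{eq:classical3} together with Lemma~\ref{lem:classical} (which gives $[x_{i,s},x_{i,r}]=0$ for $s,r$ even) computes $[x_{i,a},x_{i,b}]$ as an explicit multiple of $t_{i,a+b}$, whence the right-hand side becomes an explicit multiple of $x_{j,a+b}$ via \eqref{eq:classical2}. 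Combining the symmetric information from the $\ad t_{i,1}$-recursion with this antisymmetric computation pins down every $[x_{i,a},[x_{i,b},x_{j,0}]]$ and verifies the current Serre relation. The case $c_{ij}=0$ is immediate by the same raising, and the cases $c_{ij}=-2,-3$ proceed by the identical scheme applied to the longer nested brackets in \eqref{eq:classical5}--\eqref{eq:classical6}; the cost there is the considerably heavier bookkeeping of the parity- and index-dependent coefficients, which I expect to be the most laborious part of the argument.
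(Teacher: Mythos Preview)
Your reduction to showing that \eqref{eq:classical4'}--\eqref{eq:classical6} follow in $\L'$ from \eqref{eq:classical1}--\eqref{eq:classical3} together with \eqref{eq:classical7a}--\eqref{eq:classical9} is exactly the paper's strategy. The issue is in how you propose to carry out that deduction.

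Applying $\ad t_{i,1}$ to the known identity $\mathbb{S}(0,\dots,0\,|\,0)=0$ gives only a single linear relation among the $\mathbb{S}$'s with total index raised by one, not the vanishing of each individually; iterating this never closes the system. Your proposed remedy, the antisymmetric/Jacobi computation, produces information about $[x_{i,a},[x_{i,b},x_{j,m}]]-[x_{i,b},[x_{i,a},x_{j,m}]]$, which is \emph{orthogonal} to the symmetric combination you actually need, so it does not supply the missing equation. This gap only widens for $c_{ij}=-2,-3$, where you acknowledge heavy bookkeeping but offer no mechanism to pin down the extra degrees of freedom.

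The paper avoids this entirely by passing to modified Cartan elements $\widetilde t_{i,r},\widetilde t_{j,r}$ (suitable $\C$-linear combinations of $t_{i,r}$ and $t_{j,r}$) chosen so that $[\widetilde t_{j,r},x_{i,k}]=0$ and $[\widetilde t_{i,r},x_{j,k}]=0$. Then $\ad\widetilde t_{j,1}$ shifts only the $x_j$-index, giving $\mathbb{S}(0,\dots,0\,|\,r)=0$ for all $r$ directly; and $\ad\widetilde t_{i,k_t}$ shifts only the $x_i$-indices, so an easy induction on the number of nonzero $k_i$'s finishes. Since the $t$'s exist only for odd index, a separate argument is needed when all the indices being raised are even; there the paper uses Lemma~\ref{lem:classical} (commutativity of the $x_{i,k}$ for even $k$) together with the index-shift \eqref{eq:classical3} to reduce back to a previously known case. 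This selective-shifting device is the key technical idea you are missing.
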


\begin{proof}
We have shown in Proposition \ref{prop:crel} that the algebra $U(\g[u]^{\check\omega})$ is generated by the same set of generators with relations \eqref{eq:classical1}--\eqref{eq:classical3} and the Serre relations \eqref{eq:classical4'}--\eqref{eq:classical6}.
Therefore, it suffices to establish the Serre relations \eqref{eq:classical4'}--\eqref{eq:classical6} for a pair $i\neq j$ from the relations \eqref{eq:classical1}--\eqref{eq:classical3} together with the finite type Serre relations \eqref{eq:classical7a}--\eqref{eq:classical9} for the same pair $i\neq j$.

We shall derive the Serre relation \eqref{eq:classical5} for $c_{ij}=-2$ from \eqref{eq:classical8} using the relations \eqref{eq:classical1}--\eqref{eq:classical3}. The proofs for the Serre relations in other cases are similar and will be skipped.

Fix $i,j\in \I^0$ such that $c_{ij}=-2$. We denote the difference of both sides of \eqref{eq:classical5} by
\begin{align*}
\mathbb{S}(k_1,k_2,k_3|r)
=\Sym_{k_1,k_2,k_3}\Big[x_{i,k_1},\big[x_{i,k_2},[x_{i,k_3},x_{j,r}]\big]\Big]-4\Sym_{k_1,k_2,k_3}  (-1)^{k_1+1} [x_{i,k_1+k_2+k_3},x_{j,r}].
\end{align*}
We shall show that  $\mathbb{S}(k_1,k_2,k_3|r)=0$, for any $k_1,k_2,k_3,r\geq 0$, by induction on the largest index of nonzero $k_i$'s. 

Denote by 
\[
\widetilde{t}_{j,r} = \frac{2t_{j,r}+t_{i,r}}{2}, \qquad \widetilde{t}_{i,r} = t_{i,r}+t_{j,r}, \qquad\text{ for $r$ odd}. 
\]
By \eqref{eq:classical2}, we have
\begin{align}
\label{eq:classical2'}
\begin{split}
[\widetilde{t}_{j,r},x_{j,k}]=x_{j,k+r},\quad
[\widetilde{t}_{j,r},x_{i,k}]=0, \quad 
[\widetilde{t}_{i,r},x_{i,k}]=x_{i,k+r},\quad
[\widetilde{t}_{i,r},x_{j,k}]=0.
\end{split}
\end{align}

Set $\ad x(y)=[x,y]$. It follows by \eqref{eq:classical8} that  $\mathbb{S}(0,0,0|0)=0$. By \eqref{eq:classical2'}, applying $\ad \widetilde{t}_{j,1}$ to $\bS(0,0,0,0)$ for a multiple of times, we have $\bS(0,0,0|r)=0$, for all $r\geq 0$, proving the base case. 

We next show inductively on $t$, for $1\le t\le 3$, that if $\bS(k_1,\ldots, k_{t-1},0,\ldots,0|r)=0$ for all $k_1,\ldots,k_{t-1}\geq 0$, then we have $\bS(k_1,\ldots, k_t,0,\ldots,0|r )=0$, for any $k_1,\ldots,k_{t}\geq 0$. We divide it into two cases.

\underline{Case (i)}. Suppose that $k_1,\ldots,k_t$ are all even. By Lemma~\ref{lem:classical}, $x_{i,k_1},\ldots,x_{i,k_t}$ commute with each other. By \eqref{eq:classical3}, $[x_{i,k},x_{j,0}]=[x_{i,0},x_{j,k}]$, for $k$ even. Hence, we have
\begin{align*}
\bS(k_1,\ldots,k_t,0,\ldots,0|r )
=\bS(k_1,\ldots,k_{t-1},0,\ldots,0 |k_t+r )=0.
\end{align*}

\underline{Case (ii)}.  Suppose that $k_i$ is odd for some $i$. Since $\mathbb{S}(k_1,k_2,k_3|r) $ is symmetric with respect to $k_1,k_2,k_3$, we can assume that $k_t$ is odd. Applying $\ad \widetilde{t}_{i,k_t}$ to $\bS(k_1,\ldots, k_{t-1},0,\ldots,0|r)$, we obtain that 
\begin{align*}
&\quad(4-t)\bS(k_1,\ldots, k_{t},0,\ldots,0|r )
+\bS(k_1+k_t,k_2\ldots, k_{t-1},0,\ldots,0|r )+\\ &+\bS(k_1,k_2+k_{t}\ldots, k_{t-1},0,\ldots,0|r )+\cdots+\bS(k_1,k_2\ldots, k_{t-1}+k_{t},0,\ldots,0|r )=0.
\end{align*}
The induction hypothesis implies that, in the left-hand side of the above equality, all terms except the first one are $0$; this forces the first term to be $0$.

Hence we conclude that $\bS(k_1,\ldots, k_t,0,\ldots,0|r )=0$, for any $k_1,\ldots,k_{t}\geq 0$, especially for $t=3.$ This proves \eqref{eq:classical5}. 
\end{proof}

\section{Affine $\imath$quantum groups}
 \label{sec:iQG}

In this section, we review and set up notations for the Drinfeld type presentation of an affine $\imath$quantum group of split type from \cite{LW21, Z22}. 

\subsection{Drinfeld type presentation}

Let $v$ be an indeterminate. Recall $d_i$ for $i\in \I$ from \S\ref{subsec:loop}. Define, for $m\in \Z, i\in \I$,
\[
v_i=v^{d_i}, \qquad
[m]_{i}=\frac{v_i^m-v_i^{-m}}{v_i-v_i^{-1}}, 
\qquad \qbinom{m}{s}_i=\frac{[m]_i!}{[s]_i![m-s]_i!}.
\]
Write $[A,B]=AB-BA$ and $[A,B]_a=AB-aBA$.  

Let $\U:=U_v(\hg)$ be the Drinfeld-Jimbo quantum group, i.e., $\U$ is the $\C(v)$-algebra generated by $E_i,F_i,K_i^{\pm 1},i\in \mathbb{I}$, subject to the relations
\begin{align*}
[E_i,F_j]&=\delta_{ij} \frac{K_i-K^{-1}_i}{v_i-v_i^{-1}},\qquad [K_i,K_j]=0,\\
K_i E_j&=v_i^{c_{ij}}E_j K_i,\qquad K_i F_j=v_i^{-c_{ij}}F_j K_i,
\end{align*}
and the standard quantum Serre relations which we skip here. The $\imath$quantum group of split type $\tUi$ is by definition the subalgebra of $\U$ generated by
\begin{equation} \label{Bi}
B_i=F_i-v_i^{-2}E_iK_i^{-1}, \qquad i\in\I.
\end{equation}
(We fix the parameters for the $\imath$quantum group $\Ui$ to be $-v_i^{-2}$ in \eqref{Bi} to facilitate the degeneration to twisted Yangians below.)

We recall from \cite{LW21,Z22} the Drinfeld type presentation for $\tUi$.

\begin{proposition} [\text{cf. \cite[Theorem 5.4]{LW21}, \cite[Theorem 3.3]{Z22}}]
 \label{prop:Dr}
The algebra $\tUi$ is isomorphic to the $\C(v)$-algebra generated by $B_{i,k},\TH_{i,m}$, for $i\in \I^0,k\in \Z,m>0$, subject to the following relations \eqref{idef1}--\eqref{idef3} for $i,j\in\I^0,k,l\in \Z$: 
\begin{align}
\label{idef1}
 [H_{i,k},H_{j,l}] &=0,
\\
\label{idef2}
[H_{i,k},B_{j,l}] &=\frac{[kc_{ij}]_{i}}{k}B_{j,l+k}-\frac{[kc_{ij}]_{i}}{k}B_{j,l-k},
\\
\label{idef3}
[B_{i,k},B_{j,l+1}]_{v_i^{-c_{ij}}}-v_i^{-c_{ij}}[B_{i,k+1},B_{j,l}]_{v_i^{c_{ij}}}
& =\delta_{ij}v_i^{-2} \big( \Theta_{i,l+1-k}-\Theta_{i,l-1-k}+ \Theta_{i,k+1-l}-\Theta_{i,k-1-l}\big),
\end{align}
and the Serre relations \eqref{SerreBB0}--\eqref{idef7}:  
\begin{equation} \label{SerreBB0}
[B_{i,k},B_{j,l}]=0, 
\qquad
\text{ for } c_{ij}=0;
\end{equation}
\begin{align}   
\label{idef4}
\begin{split}
&\mathrm{Sym}_{k_1,k_2} \big(B_{i,k_1} B_{i,k_2} B_{j,l} -[2]_i B_{i,k_1} B_{j,l} B_{i,k_2} + B_{j,l} B_{i,k_1} B_{i,k_2}\big)
\\
&=\mathrm{Sym}_{k_1,k_2}\bigg( -\sum_{p=0}^{\bigl\lfloor\frac{k_2-k_1-1}{2}\bigr\rfloor } (v_i^{2p+1}+v_i^{-2p-1}) [\Theta_{i,k_2-k_1-2p-1},B_{j,l-1}]_{v_i^{-2}} 
\\
&\qquad\quad- \sum_{p=1}^{\bigl\lfloor\frac{k_2-k_1 }{2}\bigr\rfloor} (v_i^{2p}+v_i^{-2p})[B_{j,l},\Theta_{i,k_2-k_1-2p}]_{v_i^{-2}}-[B_{j,l},\Theta_{i,k_2-k_1}]_{v_i^{-2}} \bigg),
\text{ for } c_{ij}=-1;
\end{split}
\end{align}
\begin{align}\label{idef6}
\sum_{s=0}^3 (-1)^s \qbinom{3}{s}_{i} B_{i,k}^{3-s} B_{j,l} B_{i,k}^s=-v_i^{-1} [2]_{i}^2 [B_{i,k}, B_{j,l}],
\qquad
\text{ for } c_{ij}=-2;
\end{align}
and 
\begin{align}
\label{idef7}
\sum_{s=0}^4(-1)^s
\qbinom{4}{s}_{i}
B_{i,k}^{4-s}B_{j,l} B_{i,k}^s& = -v_i^{-1}(1+[3]_{i}^2)( B_{j,l} B_{i,k}^2+ B_{i,k}^2 B_{j,l})\\\notag
&\quad  +v_i^{-1}[4]_{i} (1+[2]_{i}^2) B_{i,k} B_{j,l} B_{i,k} -v_i^{-2}[3]^2_{i} B_{j,l},
\qquad
\text{ for } c_{ij}=-3.
\end{align}
Here
\begin{align}
 \label{ThetaH}
1+(v_i-v_i^{-1})\sum_{k\geq 1}\Theta_{i,k}u^k&=\exp \big((v_i-v_i^{-1})\sum_{k\geq 1}H_{i,k}u^k \big),
\end{align}
and $\Theta_{i,0}=(v_i-v_i^{-1})^{-1},\Theta_{i,m}=0$ if $m<0.$
\end{proposition}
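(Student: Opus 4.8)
Since this presentation is not original to the present paper but quoted from \cite[Theorem 5.4]{LW21} (split ADE) and \cite[Theorem 3.3]{Z22} (split BCFG), I only describe the strategy one follows to establish it. The plan is to introduce the abstract $\C(v)$-algebra $\mathcal{A}$ presented by the generators $B_{i,k}, \Theta_{i,m}$ and relations \eqref{idef1}--\eqref{idef7}, to produce a surjective homomorphism $\Phi\colon \mathcal{A} \to \tUi$ by exhibiting explicit elements of $\tUi$ that satisfy all these relations, and finally to prove injectivity of $\Phi$ by matching a PBW-type spanning set of $\mathcal{A}$ with an independently known basis of $\tUi$.

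First I would construct the root vectors inside $\tUi$. The generators $B_i=B_{i,0}$ already generate $\tUi$ by definition, so the content is to build the higher loop vectors. To this end I would use the relevant (relative) braid group symmetries $\TT_i$ acting on $\U$ that preserve the coideal subalgebra $\tUi$; applying the operator attached to a translation in the affine Weyl group to $B_i$ produces the real root vectors $B_{i,k}$ for all $k\in\Z$, with $B_{i,0}=B_i$. The imaginary (Cartan-type) vectors $\Theta_{i,m}$, and the associated $H_{i,k}$ through \eqref{ThetaH}, are then read off from the real root vectors via the diagonal ($i=j$) case of relation \eqref{idef3}. Sending $B_{i,k}\mapsto B_{i,k}$ and $\Theta_{i,m}\mapsto \Theta_{i,m}$ gives a candidate homomorphism, which is automatically surjective once it is shown to be a homomorphism, since the $B_{i,0}$ generate $\tUi$.

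The bulk of the work is verifying that these explicit elements satisfy \eqref{idef1}--\eqref{idef7}. The Cartan relations \eqref{idef1} and the mixed relation \eqref{idef2} follow from the definitions and from the action of $\TT_i$ on weight spaces. The defining relation \eqref{idef3} (which both fixes the $\Theta$'s on the diagonal and constrains the off-diagonal brackets) and the current Serre relations \eqref{idef4}, \eqref{idef6}, \eqref{idef7} should be reduced to computations in affine rank $\le 2$ sub-$\imath$quantum groups, where they are checked directly; for $c_{ij}=-3$ one instead verifies the reduced equal-index Serre relations and invokes that these suffice, as in \cite{Z22}. The final step is the PBW/injectivity argument: using the relations one shows that ordered monomials in the real root vectors $B_{\alpha,k}$ (indexed by $\alpha\in\cR^+$, $k\in\Z$) and the imaginary vectors $\Theta_{i,m}$ span $\mathcal{A}$, and one then shows their $\Phi$-images are linearly independent in $\tUi$. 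The latter can be achieved by descending into $\U$, whose Drinfeld--Beck PBW basis is available, or through the $\imath$Hall algebra realization of $\tUi$ used in \cite{LW21}; comparing the spanning set of $\mathcal{A}$ against the resulting basis of $\tUi$ upgrades the surjection $\Phi$ to the desired isomorphism.

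The hard part will be the verification, and indeed the correct formulation, of the current Serre relations \eqref{idef4}, \eqref{idef6}, \eqref{idef7}: their right-hand sides carry lower-order corrections expressed through the $\Theta_{i,m}$, which must be pinned down exactly, and this is precisely the obstruction that forces the passage to reduced Serre relations in the $c_{ij}=-3$ case. The second substantial obstacle is establishing the PBW spanning set for $\mathcal{A}$ and certifying linear independence of its image, since a priori the abstract relations could collapse the algebra; it is exactly this matching with a genuine basis of $\tUi$ that turns the surjection into an isomorphism.
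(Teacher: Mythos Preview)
The paper does not supply its own proof of this proposition: it is quoted verbatim from \cite[Theorem 5.4]{LW21} and \cite[Theorem 3.3]{Z22} and stated without argument, so there is nothing in the present paper to compare against beyond the citation itself. Your outline is a faithful summary of the strategy carried out in those references --- construct real root vectors via (relative) braid group symmetries, extract the imaginary root vectors from the diagonal case of \eqref{idef3}, verify the relations (reducing Serre relations to low-rank computations, with the equal-index reduction in the $c_{ij}=-3$ case), and conclude by a PBW argument --- and you correctly flag the two genuine difficulties (formulating the lower-order corrections in the current Serre relations, and the spanning/independence step). There is no gap to report.
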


\begin{remark}
The $\Theta_{i,l}$ here corresponds to the imaginary root vectors introduced by Baseilhac-Kolb \cite{BK20}, which were denoted by $\acute{\Theta}_{i,l}$ in \cite{LW21}. Alternative imaginary root vectors were used in \cite[\S 2.5]{LW21} (also see \cite[\S 3.4]{LW21}) which led to a presentation of $\Ui$ of split ADE type with relations different from \eqref{idef3}--\eqref{idef4}.
It was realized in \cite{Z22} that the Serre relations \eqref{idef6}--\eqref{idef7} suffice for a presentation of $\Ui$ of split BCFG type.

\end{remark}

\subsection{The classical limit}

When identifying the classical limit at $v\mapsto 1$ of $\mathbf{U}^\imath$ with $U(\hg^\omega)$, we identify the generators of $\tUi$ with those of $U(\hg^\omega)$ via 
\begin{equation}\label{classlimit}
B_{i,k} \rightsquigarrow b_{i,k}, \qquad \TH_{i,m} \rightsquigarrow \theta_{i,m},\qquad \text{for } k\in \mathbb{Z}, m>0, i\in\I^0,
\end{equation}
where $b_{i,k},\theta_{i,m}$ are defined in \eqref{def:loop}; see \cite[\S 3.6]{LW21}.

For each $\alpha\in \cR^+$, we fix $i_1,\ldots, i_m\in \I^0$ such that $f_{\alpha}:=[f_{i_1},[f_{i_2},\cdots[f_{i_{m-1}},f_{i_m}]\cdots]]$ is a nonzero root vector in $\g_{-\alpha}$. Define
\begin{align}
  \label{def:Balpha}
B_{\alpha,k}=\Big[B_{i_1,0},\big[B_{i_2,0},\cdots[B_{i_{m-1},0},B_{i_m,k}]\cdots\big]\Big].
\end{align}

Let $\A$ be the localization of $\C[v]$ with respect to the ideal $(v-1)$ and $\tUiA$ be the $\A$-subalgebra of $\tUi$ generated by the elements $B_{i,k},\TH_{i,m}$ for $k\in \Z,m> 0$. Let $\U_\A$ be the $\A$-subalgebra of $\U$ generated by $E_i,F_i,K_i^{\pm1}, \frac{K_i-1}{v-1}$. It is known (cf. \cite[Corollary 10.3]{Ko14}) that $\tUiA\subset \U_\A$. The following proposition is an analog of \cite[Proposition 2.1]{GM12}. 

\begin{proposition}[cf. \cite{Ko14}]
\label{prop:GM}
We have an algebra isomorphism $\tUiA / (v-1) \tUiA \stackrel{\cong}{\longrightarrow} U(\hg^\omega)$. Moreover, $\tUiA$ is a free $\A$-module isomorphic to $U(\hg^\omega) \otimes_{\C} \A$. 
\end{proposition}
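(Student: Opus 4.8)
The plan is to combine the classical limit of the ambient quantum group with the quantum PBW basis of $\tUi$, following the strategy of the analogous statement \cite[Proposition 2.1]{GM12}.

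First I would produce a surjection. It is standard that the $\A$-form $\U_{\A}$ specializes at $v\mapsto 1$ to the enveloping algebra, i.e.\ $\U_{\A}/(v-1)\U_{\A}\cong U(\hg)$, with $E_i,F_i\mapsto e_i,f_i$, $K_i\mapsto 1$, and $(K_i-1)/(v-1)\mapsto h_i$. Restricting this specialization map along the inclusion $\tUiA\subset \U_{\A}$ (which holds by \cite[Corollary 10.3]{Ko14}) and using the classical-limit identification \eqref{classlimit}, the generators $B_{i,k},\TH_{i,m}$ are sent to $b_{i,k},\theta_{i,m}$. By Lemma~\ref{lem:loop} the latter generate $U(\hg^\omega)$, so the image of $\tUiA$ is exactly $U(\hg^\omega)$; since $(v-1)\tUiA$ lies in the kernel, the map descends to a surjective algebra homomorphism
\[
\bar\phi:\ \tUiA/(v-1)\tUiA\ \longrightarrow\ U(\hg^\omega).
\]

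Next I would exhibit a PBW spanning set over $\A$. Recall from \cite{LW21,Z22} the PBW basis of $\tUi$ over $\C(v)$ given by the ordered monomials in the root vectors $B_{\alpha,k}$ ($\alpha\in\cR^+,k\in\Z$) from \eqref{def:Balpha} and the $\TH_{i,m}$ ($i\in\I^0,m>0$); all of these lie in $\tUiA$. I would claim that they in fact span $\tUiA$ over $\A$, for which it suffices to check that every product of the algebra generators $B_{i,k},\TH_{i,m}$ can be straightened into an $\A$-linear combination of ordered PBW monomials. This reduces to verifying that all structure constants occurring in the Drinfeld relations \eqref{idef1}--\eqref{idef7}, and in the inductive expansion of the higher root vectors $B_{\alpha,k}$, lie in $\A=\C[v]_{(v-1)}$: for instance $[kc_{ij}]_i/k\in\C[v,v^{-1}]$ and each $v_i^{\pm1}$ is a unit in $\A$, so the straightening never leaves the $\A$-lattice $\tUiA$. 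Then, under $\bar\phi$, these monomials map to the ordered monomials in $b_{\alpha,k}$ and $\theta_{i,m}$, which form a $\C$-basis of $U(\hg^\omega)$ by the PBW theorem together with the basis of $\g[t,t^{-1}]^\omega$ produced in the proof of Lemma~\ref{lem:loop}.

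I would then conclude as follows. Since $\bar\phi$ carries a spanning set of $\tUiA/(v-1)\tUiA$ to a linearly independent set, the PBW monomials must themselves be a $\C$-basis of $\tUiA/(v-1)\tUiA$ and $\bar\phi$ is an isomorphism, giving the first assertion. For freeness, the same monomials are $\A$-linearly independent: a nontrivial $\A$-relation, after dividing out the largest power of $(v-1)$ dividing all coefficients, would reduce modulo $(v-1)$ to a nontrivial $\C$-relation among their images, contradicting their independence in $U(\hg^\omega)$. Hence the PBW monomials form a free $\A$-basis of $\tUiA$, and $\tUiA\cong U(\hg^\omega)\otimes_\C\A$ as $\A$-modules. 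The main obstacle is the integrality step: one must verify that the PBW straightening for $\tUi$ stays within the lattice $\tUiA$, which requires tracking the coefficients in the Serre relations \eqref{idef4}--\eqref{idef7} and in the recursive construction of the root vectors; the remaining arguments are routine specialization.
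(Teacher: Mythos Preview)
Your proposal is reasonable and would work, but it takes a different route from the paper in one key respect. The paper does not attempt to verify the integrality of the PBW straightening at all. Instead it observes that $\U_{\A}$ is a free $\A$-module, and since $\A=\C[v]_{(v-1)}$ is a PID, the submodule $\tUiA\subset\U_{\A}$ is automatically free; the rank is then read off from the classical limit via Kolb's monomial basis \cite[Proposition~6.2]{Ko14}. This sidesteps precisely the obstacle you flag as the hard part. Note also that Kolb's basis is built from the Serre-presentation generators $B_i$ ($i\in\I$), not from the Drinfeld-type root vectors $B_{\alpha,k},\TH_{i,m}$ you use; so the two proofs work with genuinely different PBW bases.

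Your direct approach has the advantage of producing the explicit Drinfeld-type monomial basis that is actually used later in the paper (e.g.\ in the proof of Theorem~\ref{thm:qiso}). But the integrality step is more delicate than your sketch suggests: the commutation of $\TH_{i,m}$ with $B_{j,l}$ is governed by \eqref{idef2}, which is stated for $H_{i,k}$ rather than $\TH_{i,m}$, and passing between the two via the exponential formula \eqref{ThetaH} introduces factors of $(v_i-v_i^{-1})$; one must check that the resulting straightening relations for $\TH_{i,m}$ (cf.\ \eqref{idef2'}) still have $\A$-coefficients. This is true, but it is exactly the bookkeeping the PID argument lets you avoid.
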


\begin{proof}
The first statement was established in \cite[Theorem 10.8]{Ko14}. We prove the second statement. It is well known that $\U_\A$ is a free $\A$-module with a monomial $\A$-basis. Since $\A$ is a principal ideal domain, the $\A$-submodule $\Ui_\A$ of $\U_\A$ is free. A monomial basis for $\tUi$ was constructed in \cite[Proposition 6.2]{Ko14}, and one can use similar arguments therein to establish a monomial basis for $\tUi_\A$. The classical limit of the monomial basis forms a basis of $U(\hg^\omega)$, and this implies that $\tUiA$ is isomorphic to $U(\hg^\omega) \otimes_{\C} \A$ as $\A$-modules.
\end{proof}

\subsection{Defining relations via generating functions}

Define generating functions in a variable $z$:
\begin{align}\label{eq:Genfun}
\begin{cases}
\B_{i}(z) =\sum_{k\in\Z} B_{i,k}z^{k},
\\
 \bTH_{i}(z)  =(v_i-v_i^{-1})^{-1}+ \sum_{m > 0}\Theta_{i,m}z^{m},
\\
\bH_i(z)=\sum_{m\geq 1} H_{i,m} z^m,
\\
\bDel(z)=\sum_{k\in\Z}  z^k.
\end{cases}
\end{align}
The identity \eqref{ThetaH} can be reformulated as 
\[
(v_i-v_i^{-1})\bTH_{i}(z)=\exp\big( (v_i-v_i^{-1})\bH_i(z)\big).
\]

The Drinfeld type relations \eqref{idef1}--\eqref{idef4} for $\Ui$ can be reformulated in terms of generating functions as \eqref{iDRG1}--\eqref{iDRG4} (see \cite{LW21}):
\begin{align}
& \bTH_i(z) \bTH_j(w) =\bTH_j(w) \bTH_i(z),
\label{iDRG1}
\\
& \B_j(w)  \bTH_i(z)
 = \bigg(
  \frac{1 -v_i^{c_{ij}}zw^{-1}}{1 -v_i^{-c_{ij}}zw^{-1}} \cdot \frac{1 -v_i^{-c_{ij}}zw}{1 -v_i^{c_{ij}} zw }
  \bigg)
\bTH_i(z) \B_j(w), \label{iDRG2}
\\
&(v_i^{c_{ij}}z -w) \B_i(z) \B_j(w) +(v_i^{c_{ij}}w-z) \B_j(w) \B_i(z)=0, \qquad \text{ if }i\neq j,\label{iDRG3a}
\\\notag
&(v_i^2z-w) \B_i(z) \B_i(w) +(v_i^{2}w-z) \B_i(w) \B_i(z)
\\\label{iDRG3b}
&\qquad\qquad\qquad\qquad\quad =v_i^{-2} \bDel(zw)(z-w) \big( \bTH_i(w) -\bTH_i(z) \big),
\\\notag
&\Sym_{w_1,w_2}\big\{\B_{i}(w_1)\B_{i}(w_2)\B_{j}(z)
-[2]_{v_i}\B_{i}(w_1)\B_{j}(z)\B_{i}(w_2)+\B_{j}(z)\B_{i}(w_1)\B_{i}(w_2)\big\}
\\\label{iDRG4}
&\qquad= \Sym_{w_1,w_2}\bDel(w_1w_2)
\frac{[2]_{i} z(w_2-w_1)}{(w_1-v_i^2 w_2)(w_1-v_i^{-2}w_2)}[\bTH_i(w_2),\B_{j}(z)]_{v_i^{-2}} 
\\\notag
&\qquad\quad +\Sym_{w_1,w_2} \bDel(w_1w_2)\frac{w_2^2-w_1^2}{(w_1-v_i^2 w_2)(w_1-v_i^{-2}w_2)}[\B_{j}(z),\bTH_i(w_2)]_{v_i^{-2}},
\text{ for } c_{ij}=-1.
\end{align}

The following Serre type relation for $c_{ij}=-2$ is given in \cite[(5.21)]{Z22}
\begin{align}
\label{gen4}&\big((w_1+w_2+w_3)(w_1^{-1}+w_2^{-1}+w_3^{-1})-[3]^2_{v_i}\big)\mathbb{S}(w_1,w_2,w_3|z) \\\notag
&= [3]_{v_i}z^{-1}\Sym_{w_1,w_2,w_3}  \bDel(w_2w_3) ( w_2-w_3)\\\notag
&\quad\times\bigg(v_i[2]_{v_i}\big[\B_i(w_1),[\B_j(z),\bTH_i(w_2)]_{v_i^{-4}}\big]-\big[[ \B_j(z),\B_i(w_1)]_{v_i^{-2}},\bTH_i(w_2)\big]\bigg)\\\notag
&\quad+\Sym_{w_1,w_2,w_3} \bDel(w_2w_3)( w_3^{-1}-w_2^{-1})(w_1+w_2+w_3)\\\notag
&\quad\times\bigg(-v_i[2]_{v_i}\big[[\bTH_i(w_2),\B_j(z)]_{v_i^{-4}}, \B_i(w_1)\big]+\big[\bTH_i(w_2),[\B_i(w_1), \B_j(z)]_{v_i^{-2}}\big]\bigg).
\end{align}

For later use, we reformulate some Drinfeld type relations for $\Ui$ (see Proposition \ref{prop:Dr}) in different forms.

\begin{proposition}
\label{prop:equiv}
\begin{enumerate}
    \item 
The relation \eqref{idef3} for $m:=l-k-1\ge 1$ can be reformulated as 
\begin{align}
\label{idef3'}
[B_{i,k},B_{j,m+k+2}]_{v_i^{-c_{ij}}}-v_i^{-c_{ij}}[B_{i,k+1},B_{j,m+k+1}]_{v_i^{c_{ij}}} 
=\delta_{ij}v_i^{-2} \left( \Theta_{i,m+2} -\Theta_{i,m} \right).
\end{align}
\item 
The relation \eqref{idef2} is equivalent to the following relation:
\begin{align}
\label{idef2'}
\begin{split}
[B_{j,l},\Theta_{i,k+2}] &+[B_{j,l},\Theta_{i,k} ]-v_i^{c_{ij}}B_{j,l-1}\Theta_{i,k+1} \\
&-v_i^{-c_{ij}}B_{j,l+1}\Theta_{i,k+1}+v_i^{c_{ij}}\Theta_{i,k+1}B_{j,l+1}+v_i^{-c_{ij}}\Theta_{i,k+1}B_{j,l-1} =0.
\end{split}
\end{align}
\end{enumerate}
\end{proposition}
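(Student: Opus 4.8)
The plan is to handle the two parts independently, part~(1) by a bare re-indexing and part~(2) through the generating-function reformulation already recorded in the excerpt.

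For part~(1), I would substitute $l = m+k+1$ (equivalently $m = l-k-1$) directly into \eqref{idef3}. The left-hand side becomes the left-hand side of \eqref{idef3'} verbatim, since $B_{j,l+1}=B_{j,m+k+2}$ and $B_{j,l}=B_{j,m+k+1}$. On the right-hand side the four indices $l+1-k,\,l-1-k,\,k+1-l,\,k-1-l$ specialize to $m+2,\,m,\,-m,\,-m-2$; because the hypothesis $m\ge 1$ forces $-m<0$ and $-m-2<0$, the convention $\Theta_{i,p}=0$ for $p<0$ annihilates the last two terms, leaving exactly $\delta_{ij}v_i^{-2}(\Theta_{i,m+2}-\Theta_{i,m})$. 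This is the sole use of $m\ge1$, and no further computation is required.

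For part~(2), I would pass to the generating-function identity \eqref{iDRG2}, which is equivalent to \eqref{idef2} by the reformulation of \cite{LW21} quoted in the excerpt (the exponential dictionary \eqref{ThetaH} between the $H_{i,k}$ and the $\Theta_{i,m}$ being the one nontrivial input here). Setting $a:=v_i^{c_{ij}}$ and clearing the denominators $(1-a^{-1}zw^{-1})(1-azw)$ and $(1-azw^{-1})(1-a^{-1}zw)$ turns \eqref{iDRG2} into the polynomial identity
\begin{align*}
(1 - azw - a^{-1}zw^{-1} + z^2)\,\B_j(w)\bTH_i(z) = (1 - a^{-1}zw - azw^{-1} + z^2)\,\bTH_i(z)\B_j(w).
\end{align*}
Since $\bTH_i(z)$ involves only nonnegative powers of $z$, both sides are honest power series in $z$ and Laurent series in $w$, so comparing coefficients is legitimate.

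The crux is then to read off the coefficient of $z^{k+2}w^l$. The left side yields $B_{j,l}\Theta_{i,k+2}-aB_{j,l-1}\Theta_{i,k+1}-a^{-1}B_{j,l+1}\Theta_{i,k+1}+B_{j,l}\Theta_{i,k}$ and the right side yields $\Theta_{i,k+2}B_{j,l}-a^{-1}\Theta_{i,k+1}B_{j,l-1}-a\Theta_{i,k+1}B_{j,l+1}+\Theta_{i,k}B_{j,l}$; bringing the right-hand side over and grouping the $\Theta_{i,k+2}$ and $\Theta_{i,k}$ contributions into commutators gives precisely \eqref{idef2'}. Conversely, the relations \eqref{idef2'} for all $l\in\Z$ and all $k$ are exactly the coefficients of the displayed identity (the values $k=-1$ and $k=-2$ being the lowest $z$-coefficients, where the convention $\Theta_{i,0}=(v_i-v_i^{-1})^{-1}$, $\Theta_{i,p}=0$ for $p<0$ makes the absent terms vanish so that no spurious relation appears), and they reassemble into \eqref{iDRG2}, hence into \eqref{idef2}. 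I do not expect a genuine obstacle: the real content of part~(2) is this coefficient bookkeeping, and the only point demanding care is tracking the $\Theta$-convention at the low-degree coefficients in $z$.
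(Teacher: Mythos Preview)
Your proposal is correct and follows essentially the same approach as the paper: part~(1) is a direct re-indexing (the paper simply says ``clear''), and part~(2) is precisely the generating-function manipulation the paper alludes to when it writes that the equivalence ``follows by some manipulations of generating functions $\B_j(z), \bH_i(z)$ and $\bTH_i(z)$'' and refers to \cite{LW21} and \cite{LWZ24}. You have supplied the details the paper omits---clearing denominators in \eqref{iDRG2} and extracting the $z^{k+2}w^l$ coefficient---and your check of the boundary cases $k=-1,-2$ confirming that no spurious relation arises is a careful touch.
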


\begin{proof}
    (1) is clear. 

    (2) follows by some manipulations of generating functions $\B_j(z), \bH_i(z)$ and $\bTH_i(z)$ in \eqref{eq:Genfun}. We refer to \cite[\S2.4]{LW21} or \cite[Lemma 4.8]{LWZ24} for variants and detailed proofs. 
\end{proof}

\section{Drinfeld type presentations for twisted Yangians}
\label{sec:main}

We formulate twisted Yangians $\Y$ of split type (excluding $G_2$) in Drinfeld presentations and show that they are isomorphic to the degenerations of affine $\imath$quantum groups of split type. A PBW basis for $\Y$ is established and this allows us to conclude that $\Y$ is a flat deformation of the corresponding twisted current algebra $U(\g [u]^{\check\omega})$. Finally, we provide a second presentation of $\Y$ which only uses finite type Serre relations.

\subsection{Twisted Yangians of split type}
\label{degBK}

 We shall denote
 \[
 \delta_{k,ev} = 
 \begin{cases}
 1 & \text{ if $k$ is even},
 \\
 0 & \text{ if $k$ is odd}. 
 \end{cases}
 \]
We also write $\{A,B\}=AB+BA$. Recall that $(c_{ij})_{i,j\in \I^0}$ denotes the Cartan matrix for $\g$. We exclude the type $G_2$ in this subsection, and will return to it in Section~\ref{sec:G2}.

\begin{definition}
\label{def:YN}
The {\em twisted Yangian of split type $(\I, \text{Id})$}, denoted $\Y$, is defined to be the $\C[\h]$-algebra generated by $\H_{i,s},\X_{j,r}$, for $s\in 2\N+1,r\in\N,i,j\in \mathbb{I}^0$, subject to the following relations \eqref{ty0}--\eqref{ty2}:
\begin{align}\label{ty0}
[\H_{i,s},\H_{j,r}] &=0,
\\
\label{ty5}
[\H_{i,1},\X_{j,r}]& =2  c_{ij}\X_{j,r+1},
\\
\label{ty1}
[\X_{j,r},\H_{i,s+2}]-[\X_{j,r+2},\H_{i,s}]
&=-\h d_i c_{ij}\{ \X_{j,r+1},\H_{i,s} \}+ \frac{\h^2 d_i^2 c^2_{ij}}{4}[\X_{j,r},\H_{i,s}] ,
\\
\label{ty2}
[\X_{i,s},\X_{j,r+1}]-[\X_{i,s+1},\X_{j,r}] &+\frac{\h d_i c_{ij}}{2}\{\X_{j,r},\X_{i,s}\}
=2\delta_{ij} (-1)^s \H_{i,r+s+1},
\end{align}
and the Serre type relations \eqref{ty3}--\eqref{ty6}, for $0\ge c_{ij}\ge -2$:
\begin{equation}\label{ty3}
[\X_{i,s},\X_{j,r}]= 0 \qquad\qquad\qquad\qquad (c_{ij}=0),
\end{equation}
\begin{equation}\label{ty4}
\begin{aligned}
&\Sym_{k_1,k_2}\Big[b_{i,k_1} ,\big[b_{i,k_2}, b_{j,r} \big] \Big]
\\
&=\delta_{k_1+k_2,ev}( -1)^{k_1+1} \left(2 \left( \frac{\h d_i}{2}\right)^{k_1+k_2} \X_{j,r} +\sum_{t=0}^{ \frac{k_1+k_2}{2}-1 } \left( \frac{\h d_i}{2}\right)^{2t} [\X_{j,r+1}, \H_{i,k_1+k_2-2t-1}] \right.
\\
&\left. \qquad\qquad \qquad\qquad  + \sum_{t=0}^{ \frac{k_1+k_2}{2}-1 } 2^{-2t} (\h d_i)^{2t+1}  \{ \H_{i,k_1+k_2-2t-1}, \X_{j,r} \} \right)
\quad (c_{ij}=-1),
\end{aligned}
\end{equation}
and
\begin{align}
& \Sym_{k_1,k_2,k_3}\Big[b_{i,k_1} ,\big[b_{i,k_2},[b_{i,k_3}, b_{j,r} ] \big]\Big]
\notag \\ 
 &= \mathrm{Cyc}_{k_1,k_2,k_3}  \sum_{s\geq 0}\sum_{p=0}^s(-1)^{k_3+p+1}\binom{s}{p} 3^{-p-1}\h^{2s-2p}\times
\notag  \\ 
&\quad\times\Big\{ 3\big[[b_{i,k_1+2p},b_{j,r+1}],h_{i,k_2+k_3-2s-1}\big]
+\big[[b_{i,k_1+2p+1},b_{j,r}],h_{i,k_2+k_3-2s-1}\big] +
\notag \\ 
&\qquad +6\big[b_{i,k_1+2p},[b_{j,r+1},h_{i,k_2+k_3-2s-1}]\big]
+2\big[b_{i,k_1+2p+1},[b_{j,r},h_{i,k_2+k_3-2s-1}]\big]+
\label{ty6}  \\ 
&\qquad +3 \h d_i \big[\big\{b_{i,k_1+2p},h_{i,k_2+k_3-2s-1}\big\},b_{j,r} \big]
+9 \h d_i \big[b_{i,k_1+2p},\{h_{i,k_2+k_3-2s-1},b_{j,r} \}\big]\Big\}
\notag \\ 
&\qquad + 8  \mathrm{Cyc}_{k_1,k_2,k_3} \delta_{k_2+k_3,ev}
\sum_{p= 0}^{\frac{k_2+k_3}{2}}(-1)^{k_3+p+1} \binom{\frac{k_2+k_3}{2}}{p}3^{-p}\h^{k_2+k_3-2p}\big[b_{i,k_1+2p},b_{j,r}\big] \quad (c_{ij}=-2),
\notag
\end{align}
where $\H_{i,s}=0$ if $s$ is even or $s<0$. 
\end{definition}

A variant of $\Y$ would be a $\C$-algebra by setting $\hbar=1$ in the relations \eqref{ty0}--\eqref{ty6}.

\begin{remark}  \label{rem:noG2}
    Definition~\ref{def:YN} excludes the twisted Yangian of type $G_2$, as it does not include  the Serre relations for $c_{ij}=-3$. See however Section~\ref{sec:G2} for an alternative approach for twisted Yangian of type $G_2$ similar to the reduced presentation of $\Y$ in Theorem~\ref{thm:Yreduced} below. 
\end{remark}

\begin{remark}
Replacing the Cartan matrix $C^0$ (and respectively, the indexing set $\I^0$ for generators) in the Drinfeld presentations of twisted Yangians above by the affine Cartan matrix $C$ (and respectively, $\I$), we obtain a definition of {\em affine twisted Yangians} of split types. 
\end{remark}

\begin{lemma} \label{lem:equiv}
\begin{enumerate}
    \item 
For $i=j$, the relation \eqref{ty2} is equivalent to the following relation:
\begin{align}\label{bbalt}
[b_{i,s},b_{i,r+1}] =-\h d_i\sum_{t=0}^{r-s} \X_{i,r-t}\X_{i,s+t}+(-1)^s h_{i,r+s+1},
\qquad \text{ for } s\leq r.
\end{align}
\item
The relation \eqref{ty1} is equivalent to the following relation:
\begin{align}\label{bhalt}
\begin{split}
[\X_{j,r},\H_{i,s}]=&-\h d_i c_{ij}\sum_{t=1}^{\lfloor s/2\rfloor}( \X_{j,r+2t-1}\H_{i,s-2t} + \H_{i,s-2t}\X_{j,r+2t-1})
\\
&+  \frac{\h^2 d_i^2 c^2_{ij}}{4}
\sum_{t=1}^{\lfloor s/2\rfloor}[\X_{j,r+2t-2},\H_{i,s-2t}] -2 c_{ij}\X_{j,r+s}.
\end{split}
\end{align}
\end{enumerate}
\end{lemma}

\begin{proof}
(1) The identity \eqref{ty2} for $i=j$ follows from subtracting the formula \eqref{bbalt} by its variant with different indices for $[b_{i,s+1},b_{i,r}].$ On the other hand, the identity \eqref{bbalt} follows from \eqref{ty2} by a simple induction on $r-s$. The proof for (2) is entirely similar. 
\end{proof}

\subsection{$\Y$ as a deformation of $U(\g[u]^{\check\omega})$ over $\C[\hbar]$}

It follows from Definition~\ref{def:YN} that $\Y$ is an $\N$-graded $\C[\hbar]$-algebra by assigning degrees $\deg\h=1,\deg b_{i,r}=r, \deg h_{i,s} =s$, for all admissible $i,r,s$. Following a referee's suggestion, we are ready to formulate the following deformation statement now, and Proposition \ref{prop:PBWbasis} below is an immediate consequence of this. 

\begin{proposition} \label{prop:deform}
    The twisted Yangian $\Y$ is an $\N$-graded $\C[\hbar]$-algebra deformation of $U(\g[u]^{\check\omega})$. 
\end{proposition}

\begin{proof}
    By Proposition~\ref{prop:crel}, Definition~\ref{def:YN} and Lemma~\ref{lem:equiv}, we have a homomorphism
\begin{align*}
\jmath\colon U(\g[u]^{\check\omega}) & \longrightarrow \Y /\hbar \Y,
\qquad
2t_{i,s} \mapsto h_{i,s},  \quad
x_{i,r} \mapsto \ov{b}_{i,r},
\end{align*}
for all admissible $i,r,s$. Now viewing $U(\g[u]^{\check\omega})$ as a $\C[\hbar]$-algebra, via the evaluation $\C[\hbar] \rightarrow \C$, $\hbar\mapsto 0$, we obtain a $\C[\hbar]$-algebra homomorphism $\Y \rightarrow U(\g[u]^{\check\omega})$, $h_{i,s} \mapsto 2t_{i,s}, b_{i,r} \mapsto x_{i,r}$. This homomorphism factors through a homomorphism $\Y/\hbar\Y \rightarrow U(\g[u]^{\check\omega})$, which is clearly the inverse to the homomorphism $\jmath$. Hence $\jmath$ is an isomorphism. 
\end{proof}

 For each $\alpha\in \cR^+$, we fix $i_1,\ldots, i_m\in \I^0$ such that $f_{\alpha}:=[f_{i_1},[f_{i_2}, \cdots [f_{i_{m-1}},f_{i_m}]\cdots]]$ is a nonzero root vector in $\g_{-\alpha}$. Define
\begin{align}
\label{def:bfalpha}
\X_{\alpha,r}=\Big[\X_{i_1,0},\big[\X_{i_2,0},\cdots[\X_{i_{m-1},0},\X_{i_m,r}]\cdots\big]\Big].
\end{align}

\begin{proposition}
  \label{prop:PBWbasis}
The ordered monomials of $\{\X_{\alpha,r},\H_{i,s} \mid \alpha\in \cR^+,i\in \I^0,r\in \N, s\in 2\N+1\}$ (with respect to any fixed total ordering) form a spanning set of $\Y$.
\end{proposition}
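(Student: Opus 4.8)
The goal is to show that ordered monomials in the root vectors $\X_{\alpha,r}$ and the Cartan generators $\H_{i,s}$ span $\Y$. The natural strategy is a filtration/associated-graded argument that reduces the spanning problem to the classical case already treated in Proposition~\ref{prop:crel}, which furnishes a presentation for $U(\g[u]^{\check\omega})$.

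\textbf{Approach.} First I would put a filtration on $\Y$ analogous to the one used in the proof of Proposition~\ref{prop:crel}, by assigning degrees $\deg \X_{i,r}=r+1$ and $\deg \H_{i,s}=s+1$ (and treating $\h$ as degree $0$, or alternatively incorporating it into the degree). Passing to the associated graded algebra $\ov{\Y}$, the deformation terms in the defining relations—namely the $\h$-terms on the right-hand sides of \eqref{ty1}, \eqref{ty2}, and the lower-order terms in the current Serre relations \eqref{ty4}--\eqref{ty6}—drop to lower filtration degree and hence vanish in $\ov{\Y}$. What survives are precisely the leading relations, which should match the classical relations \eqref{eq:classical1}--\eqref{eq:classical6} defining $U(\g[u]^{\check\omega})$ (up to the identification $\X_{i,r}\leftrightarrow x_{i,r}$, $\H_{i,s}\leftrightarrow 2t_{i,s}$ and the rescaling conventions). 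This gives a surjection from $U(\g[u]^{\check\omega})$, or rather a quotient of it, onto $\ov{\Y}$.

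\textbf{Key steps, in order.} (i) Verify that each defining relation of $\Y$ degenerates, under the filtration, to the corresponding relation of the presentation in Proposition~\ref{prop:crel}; here the $\delta_{k_1+k_2,ev}(-1)^{k_1+1}$-type leading coefficients in \eqref{ty4} and the $\Sym/\mathrm{Cyc}$ leading terms in \eqref{ty6} must be checked to reproduce \eqref{eq:classical4} and \eqref{eq:classical5} exactly. (ii) Conclude that there is a surjective graded algebra homomorphism $U(\g[u]^{\check\omega}) \twoheadrightarrow \ov{\Y}$ sending $x_{i,m}\mapsto \ov{\X}_{i,m}$ and $t_{i,r}\mapsto \ov{\H}_{i,r}/2$. (iii) Apply the classical spanning result from Proposition~\ref{prop:crel}: the ordered monomials in $\ov{x}_{\alpha,m}$ and $\ov{t}_{i,r}$ span $U(\g[u]^{\check\omega})$, so their images—the ordered monomials in $\ov{\X}_{\alpha,r}$ and $\ov{\H}_{i,s}$—span $\ov{\Y}$, using that the leading symbol of $\X_{\alpha,r}$ as defined in \eqref{def:bfalpha} matches $\ov{x}_{\alpha,r}$ in $\ov{\Y}$. (iv) Lift the spanning statement from $\ov{\Y}$ back to $\Y$ by the standard filtered-graded argument: if the associated-graded monomials span $\ov{\Y}$, then the original monomials span each filtered piece, hence span $\Y$. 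One must be slightly careful that the filtration is exhaustive and that straightening any monomial into ordered form strictly decreases filtration degree in the error terms, so an induction on filtration degree closes.

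\textbf{Main obstacle.} The crux is step (i) together with the ordering/straightening bookkeeping in step (iv). Verifying that the intricate current Serre relations \eqref{ty4} and especially the lengthy $c_{ij}=-2$ relation \eqref{ty6} have leading terms matching \eqref{eq:classical4}--\eqref{eq:classical5} is the point most likely to hide a coefficient or parity subtlety, since those relations carry $\delta_{k,ev}$ factors and sign patterns that must align with the classical side. The subtler structural difficulty is ensuring the straightening process actually terminates with strictly lower-degree remainders: reordering factors via the commutation relations \eqref{ty0}--\eqref{ty2} produces right-hand sides (e.g.\ the $\{\X,\H\}$ and $[\X,\H]$ terms) that a priori could have the same or higher filtration degree, so I would need to confirm that the chosen degree assignment makes all such correction terms lie in strictly lower filtration, which is exactly what legitimizes the reduction to the associated graded. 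Once that degeneration to the classical presentation is confirmed, the spanning conclusion follows formally from Proposition~\ref{prop:crel}.
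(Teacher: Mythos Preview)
Your overall strategy—pass to an associated graded and invoke a classical spanning statement—is the same as the paper's, and your chosen filtration $\deg \X_{i,r}=r+1$, $\deg \H_{i,s}=s+1$ is exactly the one used there. However, your identification of the associated graded is off, and this breaks step (ii).

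With this degree assignment the right-hand sides of the Serre relations \eqref{ty4}--\eqref{ty6} have strictly lower degree than the left-hand sides (e.g.\ in \eqref{ty4} the left side sits in degree $k_1+k_2+r+3$ while every term on the right sits in degree at most $k_1+k_2+r+2$), so in $\ov{\Y}$ the Serre relations become the \emph{untwisted} ones with zero right-hand side, not \eqref{eq:classical4}--\eqref{eq:classical5}. More seriously, \eqref{bhalt} shows $[\X_{j,r},\H_{i,s}]=-2c_{ij}\X_{j,r+s}+(\text{lower})$, and here the left side has degree $r+s+2$ while $\X_{j,r+s}$ has degree $r+s+1$; hence $[\breve{\H}_{i,s},\breve{\X}_{j,r}]=0$ in $\ov{\Y}$. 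But then your proposed map $t_{i,r}\mapsto \ov{\H}_{i,r}/2$, $x_{i,m}\mapsto \ov{\X}_{i,m}$ sends the relator $[t_{i,r},x_{j,m}]-c_{ij}x_{j,m+r}$ of \eqref{eq:classical2} to $-c_{ij}\breve{\X}_{j,m+r}\neq 0$, so there is \emph{no} homomorphism $U(\g[u]^{\check\omega})\to\ov{\Y}$, and your step (ii) fails as stated.

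The fix is exactly what the paper does: recognize that with this filtration the $\breve{\H}$'s are central in $\ov{\Y}$ and the Serre relations degenerate to the usual current-algebra Serre relations with zero right-hand side, so $\ov{\Y}$ is a quotient of $\C[\breve{\H}_{i,s}]\otimes U(\n^-[u])$. Then PBW for $U(\n^-[u])$ (rather than for $U(\g[u]^{\check\omega})$) gives the spanning set of ordered monomials in $\breve{\X}_{\alpha,r}$, and the rest of your steps (iii)--(iv) go through unchanged. In particular your ``main obstacle'' evaporates: there are no delicate coefficient checks in \eqref{ty4} or \eqref{ty6} to make, since all lower-order terms simply vanish in the graded picture.
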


\begin{proof}
Follow by Proposition \ref{prop:deform} and the well-known spanning set (actually a basis) for $U(\g[u]^{\check\omega})$.
\end{proof}

\begin{remark}
In the type A case, root vectors $\ttb_{ij}^{(r)}$ of $\Y_{N}$, for $i>j$ and $r\geq 0$, are introduced in \cite[(5.9)]{KLWZ23}. For $\alpha=\alpha_{j}+\alpha_{j+1}+\ldots +\alpha_{i-1}$, the element $\X_{\alpha,r}$ is identified with $\ttb_{ij}^{(r)}$, up to a sign. Then Proposition~\ref{prop:PBWbasis} for type A also follows by \cite[\S5.2, Proof of Theorem~ 5.1]{KLWZ23}.
\end{remark}

\subsection{Presentation via generating functions}
Define
\begin{align}
b_i(u)=\h d_i\sum_{r\geq 0} \X_{i,r} u^{-r-1}, \qquad h_i(u)=1+\h d_i\sum_{r\geq 0} \H_{i,r} u^{-r-1}.
\end{align}
Defining relations \eqref{ty0}--\eqref{ty6} for $\Y$ can be reformulated in generating function form as \eqref{gconj1}--\eqref{gconj7} below:
\begin{align}
\label{gconj1}
[h_i(u),h_j(v)]&=0,
\\\notag
 (u^2-v^2)[h_i(u),b_j(v)]&=d_i c_{ij}\h v \{h_i(u),b_j(v)\}
 +\frac{\h^2}{4}d_i^2 c_{ij}^2[h_i(u),b_j(v)]
 \\&\quad -\h d_j[h_i(u),b_{j,1}]-v\h d_j[h_i(u),b_{j,0}]-d_i d_j c_{ij}\h^2\{h_i(u),b_{j,0}\}, 
 \label{gconj2}
\\\notag
(u-v)[b_i(u),b_j(v)]&=\frac{d_i c_{ij}\h}{2}\{b_i(u),b_j(v)\}
+\h\big(d_i[b_{i,0},b_j(v)]-d_j[b_i(u),b_{j,0}]\big)
\\&\quad -\delta_{ij}  \h d_i\frac{u-v}{u+v}\big(h_i(u)-h_i(v)\big)-\delta_{ij}d_i\h\big( h_i(u)+ h_j(v)-2\big),
\label{gconj4}
\\\label{gconj5}
[b_i(u),b_j(v)]&=0, \qquad\qquad (c_{ij}=0),
\end{align}
\begin{align}\notag
&\Sym_{u_1,u_2}\Big[b_{i}(u_1),\big[b_{i}(u_2), b_{j}(t) \big]\Big]
\\
&=\frac{4\h d_i}{u_1 + u_2}\Sym_{u_1,u_2}  \frac{u_2(t-\h d_i)\, h_i(u_2) b_j(t)- u_2(t+\h d_i)\, b_j(t)h_i(u_2) }{4u_2^2-d_i^2\h^2}
\quad (c_{ij}=-1),
\label{gconj6} 
\end{align}
and  
\begin{align}\notag
&\Sym_{u_1,u_2,u_3}\Big[b_{i}(u_1),\big[b_{i}(u_2),[b_i(u_3), b_{j}(t)] \big]\Big]
\\\notag
&=\Sym_{u_1,u_2,u_3} \frac{-u_2 \h d_i}{(u_2+u_3)(3u_2^2-3\h^2+u_1^2)} 
\\\notag
&\quad \times \Big\{ (3t+u_1) \big[[b_i(u_1),b_j(t)],h_i(u_2)\big]+2(3t+u_1) \big[b_i(u_1),[b_j(t),h_i(t)\big] 
\\
&\qquad +3 \h d_i \big[\{b_i(u_1),h_i(u_2)\},b_j(t) \big]+9 \h d_i \big[b_i(u_1),\{h_i(u_2),b_j(t) \}\big]
\Big\} \quad (c_{ij}=-2).
\label{gconj7}
\end{align}

\subsection{From affine $\imath$quantum groups to twisted Yangians}

We shall define a filtration on $\tUiA$ analogous to the one in \cite[Section 2]{CG15} on an affine quantum group. 

Let $\psi$ denote the following composite homomorphism (see Proposition~\ref{prop:GM})
\begin{align*}
\psi: \tUiA\twoheadrightarrow \tUiA/(v-1)\tUiA
\stackrel{\cong}{\longrightarrow} U(\widehat{\mathfrak{g}}^\omega).
\end{align*}
For $m\geq 0$, denote by $\mathrm{K}_m$ the Lie ideal of $\hg^\omega$ spanned by 
\[
g\otimes t^s(t-1)^m+\omega_0(g)\otimes t^{-s-m}(1-t)^m, \forall s\in \Z,g\in \g. 
\]
Denote 
\begin{align} \label{Wspan}
    W := \C\text{-span}  \{B_{\alpha,k},\TH_{i, \ell}
\mid \alpha\in \cR^+, i\in \I^0,k\in \Z,\ell > 0 \} \subset \tUiA. 
\end{align}

Set $\mathcal{K}_m$ to be the two-sided ideals of $\tUiA$ generated by $\psi^{-1}(\mathrm{K}_m)\cap W$. Define $\mathbf{K}_m$ to be the $\C$-sum of ideals $(v-v^{-1})^{m_0} \mathcal{K}_{m_1}\cdots \mathcal{K}_{m_r}$ such that $m_0+m_1+\ldots +m_r\geq m$.

Define $\Gr_{\mathbf K} \tUi$ to be the $\C$-algebra
\begin{align}
 \label{GrK}
\Gr_{\mathbf K} \tUi:=\bigoplus_{m\geq 0} \mathbf{K}_m/\mathbf{K}_{m+1}.
\end{align}
Then $\Gr_{\mathbf K} \tUi$ can be viewed as a $\C[\h]$-algebra by setting $\h:=\ov{v-v^{-1}}\in \bK_1/\bK_2$.
Define, for $l\geq 1, k\in\mathbb{Z}$ and $r\ge 0$,
\begin{equation}\label{def:HX}
\H_{i,r,l}=\sum_{s=0}^r (-1)^{r-s}\binom{r}{s}\Theta_{i,s+l},
\qquad 
\X_{i,r,k}=\sum_{s=0}^r (-1)^{r-s}\binom{r}{s}B_{i,s+k}.
\end{equation}

\begin{lemma}\label{lemma1}
We have $\X_{i,r+1,k}=\X_{i,r,k+1}-\X_{i,r,k}$ and $ \H_{i,r+1,k}=\H_{i,r,k+1}-\H_{i,r,k}$.
\end{lemma}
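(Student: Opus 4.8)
The statement to prove, Lemma~\ref{lemma1}, asserts the two recursions $\X_{i,r+1,k}=\X_{i,r,k+1}-\X_{i,r,k}$ and $\H_{i,r+1,k}=\H_{i,r,k+1}-\H_{i,r,k}$, where the quantities are defined in \eqref{def:HX} by the finite-difference formulas $\X_{i,r,k}=\sum_{s=0}^r (-1)^{r-s}\binom{r}{s}B_{i,s+k}$ and $\H_{i,r,l}=\sum_{s=0}^r (-1)^{r-s}\binom{r}{s}\Theta_{i,s+l}$.

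The plan is to prove this as a purely combinatorial identity about the finite-difference operator, treating the $B_{i,s+k}$ (respectively $\Theta_{i,s+l}$) as arbitrary elements and exploiting the Pascal rule for binomial coefficients. Since the two claimed identities have identical shape, I would prove just one of them (say the one for $\X$) and remark that the argument for $\H$ is verbatim the same after replacing $B$ by $\Theta$ and $k$ by $l$. First I would expand the right-hand side $\X_{i,r,k+1}-\X_{i,r,k}$ directly from \eqref{def:HX}, writing it as $\sum_{s=0}^r (-1)^{r-s}\binom{r}{s}B_{i,s+k+1}-\sum_{s=0}^r (-1)^{r-s}\binom{r}{s}B_{i,s+k}$, and then reindex the first sum by $s\mapsto s-1$ so that both sums are expressed in terms of the generators $B_{i,s+k}$ with matching index $s+k$.

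After this reindexing, I would collect the coefficient of each $B_{i,s+k}$ for $0\le s\le r+1$ and check it equals $(-1)^{r+1-s}\binom{r+1}{s}$, the coefficient appearing in $\X_{i,r+1,k}$. The interior coefficients combine via Pascal's identity $\binom{r}{s-1}+\binom{r}{s}=\binom{r+1}{s}$ together with a careful sign bookkeeping: the shifted first sum contributes $(-1)^{r-(s-1)}\binom{r}{s-1}=(-1)^{r+1-s}\binom{r}{s-1}$ and the second sum contributes $-(-1)^{r-s}\binom{r}{s}=(-1)^{r+1-s}\binom{r}{s}$, so the two signs agree and add to $(-1)^{r+1-s}\binom{r+1}{s}$. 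The two boundary terms $s=0$ and $s=r+1$ match automatically since $\binom{r+1}{0}=\binom{r}{0}$ and $\binom{r+1}{r+1}=\binom{r}{r}$. This reproduces exactly $\X_{i,r+1,k}$.

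There is essentially no serious obstacle here; the only point requiring care is the sign and index bookkeeping in the reindexing step, which is where an error could creep in. No algebra relations of $\Y$ or $\tUi$ are needed — the lemma is a formal consequence of the defining summation formulas \eqref{def:HX} alone, valid for any sequences of elements indexed by integers, and is the exact analogue of \eqref{climit}-type shift identities used earlier in the paper.
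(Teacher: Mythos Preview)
Your proof is correct and takes exactly the same approach as the paper: the paper's proof is the one-line remark that the identity follows from Pascal's rule $\binom{r+1}{s}=\binom{r}{s}+\binom{r}{s-1}$ together with the definitions in \eqref{def:HX}, which is precisely what you carry out in detail.
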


\begin{proof}
    Follows by the identity $\binom{r+1}{s}=\binom{r}{s}+\binom{r}{s-1}$ and the definitions in \eqref{def:HX}.
\end{proof}

\begin{lemma}\label{lemma2}
We have
\begin{itemize}
\item[(a)] $\X_{i,r,k}\in \bK_r$;
\item[(b)] $\H_{i,r,k}\in \bK_r$ for $r \ge 0$ odd, and $\H_{i,r,k}\in \bK_{r+1}$ for $r \ge 0$ even.
\end{itemize}
\end{lemma}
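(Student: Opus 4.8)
The plan is to reduce both statements to a computation in the classical limit $U(\hg^\omega)$ via the homomorphism $\psi$ of Proposition~\ref{prop:GM}. The key observation is the following: if $x\in W$ (see \eqref{Wspan}) satisfies $\psi(x)\in \mathrm{K}_m$, then $x\in \psi^{-1}(\mathrm{K}_m)\cap W\subseteq \mathcal{K}_m\subseteq \bK_m$, the last inclusion being the single-factor term in the definition of $\bK_m$. Now recall from \eqref{def:HX} that $\X_{i,r,k}=\sum_{s=0}^r(-1)^{r-s}\binom{r}{s}B_{i,s+k}$ is a $\C$-linear combination of the simple root vectors $B_{i,s+k}=B_{\alpha_i,s+k}$, while $\H_{i,r,l}=\sum_{s=0}^r(-1)^{r-s}\binom{r}{s}\TH_{i,s+l}$ is a $\C$-linear combination of $\TH_{i,s+l}$ with $s+l\ge l\ge 1$; hence both elements lie in $W$. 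It therefore suffices to compute $\psi(\X_{i,r,k})$ and $\psi(\H_{i,r,l})$ and to locate them in the filtration $\mathrm{K}_\bullet$ of $\hg^\omega$.

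Using $\psi(B_{i,k})=f_it^{-k}-e_it^k$ and $\psi(\TH_{i,m})=h_it^m-h_it^{-m}$ from \eqref{classlimit} and \eqref{def:loop}, the binomial sums collapse (exactly as in \eqref{climit2}) to
\[
\psi(\X_{i,r,k})=f_it^{-k-r}(1-t)^r-e_it^k(t-1)^r,\qquad
\psi(\H_{i,r,l})=h_it^l(t-1)^r-h_it^{-l-r}(1-t)^r.
\]
Taking $g=\omega_0(f_i)=-e_i$ (so $\omega_0(g)=f_i$) and $s=k$, the first expression is precisely the spanning element $g\,t^s(t-1)^r+\omega_0(g)\,t^{-s-r}(1-t)^r$ of $\mathrm{K}_r$; hence $\X_{i,r,k}\in \mathcal{K}_r\subseteq \bK_r$, proving (a). For $r$ odd, $(1-t)^r=-(t-1)^r$, so $\psi(\H_{i,r,l})=h_i(t-1)^r(t^l+t^{-l-r})$, which is the $\mathrm{K}_r$-generator with $g=h_i$ and $s=l$; hence $\H_{i,r,l}\in\mathcal{K}_r\subseteq\bK_r$, giving the odd half of (b).

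The remaining, and most delicate, point is the sharper containment $\H_{i,r,l}\in\bK_{r+1}$ for $r$ even, which is the quantum analogue of the vanishing $\ov\theta_{i,r,k}=0$ recorded in Proposition~\ref{prop:climit}(1). For $r$ even, $(1-t)^r=(t-1)^r$ and $\psi(\H_{i,r,l})=h_i(t-1)^r\big(t^l-t^{-l-r}\big)$. Since $t^l-t^{-l-r}$ vanishes at $t=1$, it is divisible by $t-1$, which yields the extra factor $\psi(\H_{i,r,l})=h_i(t-1)^{r+1}p(t)$ with $p(t)=\sum_{j=-l-r}^{l-1}t^j$. I would then check that $p(t)$ is invariant under the reflection $j\mapsto -(r+1)-j$ about the half-integer $-(r+1)/2$. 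Since $r+1$ is odd, the spanning elements of $\mathrm{K}_{r+1}$ in the $h_i$-direction are exactly $h_i(t-1)^{r+1}(t^s+t^{-s-r-1})$ for $s\in\Z$, and these span precisely $h_i(t-1)^{r+1}$ times the space of Laurent polynomials carrying this reflection symmetry (the reflection has no integer fixed point, so it partitions exponents into two-element orbits $\{s,-s-r-1\}$). Hence $p(t)$ is a $\C$-combination of the $t^s+t^{-s-r-1}$, so $\psi(\H_{i,r,l})\in \mathrm{K}_{r+1}$ and therefore $\H_{i,r,l}\in\mathcal{K}_{r+1}\subseteq\bK_{r+1}$. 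The extraction of the extra factor $t-1$ together with this reflection-symmetry bookkeeping is the only nontrivial step; everything else is a direct unwinding of the definitions.
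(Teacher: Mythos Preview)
Your proof is correct and follows essentially the same approach as the paper: compute the $\psi$-images (which are $\beta_{i,r,k}$ and $\theta_{i,r,k}$, cf.~\eqref{climit2}), observe that $\X_{i,r,k},\H_{i,r,l}\in W$, and locate the images in the appropriate $\mathrm{K}_m$ via the definition of $\bK_m$. The paper condenses the even case of~(b) by invoking Proposition~\ref{prop:climit}, whereas you spell out the extra factor of $(t-1)$ and the reflection symmetry of $p(t)$ explicitly; both arguments are valid, and yours is the more self-contained unwinding of that citation.
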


\begin{proof}
The $\psi$-images of $\X_{i,r,k},\H_{i,r,k}$ are given by $\beta_{i,r,k},\theta_{i,r,k}$, respectively. It is also clear from the definition \eqref{def:HX} that $\X_{i,r,k},\H_{i,r,k}\in W$.
By Proposition~\ref{prop:climit}, we have that (a) $\beta_{i,r,k}\in \mathrm{K}_r$; (b) $\theta_{i,r,k}\in \mathrm{K}_r$ for $r$ odd, and $\theta_{i,r,k}\in \mathrm{K}_{r+1}$ for $r$ even. Then the desired statements follow from the definition of $\bK_r$.
\end{proof}

Write $\ov{\X}_{i,r,k},\ov{\H}_{i,r,k}$ for the images of  $\X_{i,r,k},\H_{i,r,k}$ in $\bK_r/\bK_{r+1}$.

\begin{corollary}\label{lemma3}
We have
\begin{itemize}
\item[(a)] $\ov{\X}_{i,r,k+1}=\ov{\X}_{i,r,k}$ in $\Gr_{\mathbf{K}} \tUi$;
\item[(b)] $\ov{\H}_{i,r,k+1}=\ov{\H}_{i,r,k}$ in $\Gr_{\mathbf{K}} \tUi$ for $r\geq 0$ odd, and $\ov{\H}_{i,r,k}=0$ in $\Gr_{\mathbf{K}} \tUi$ for $r\geq 0$ even.
\end{itemize}
\end{corollary}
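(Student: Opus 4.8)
The plan is to derive both parts directly from the telescoping identities of Lemma~\ref{lemma1} together with the filtration estimates of Lemma~\ref{lemma2}; essentially no new computation is required, so the work is purely bookkeeping with the decreasing filtration $\bK_r\supseteq \bK_{r+1}\supseteq\bK_{r+2}\supseteq\cdots$.

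For part (a), I would start from the identity $\X_{i,r,k+1}-\X_{i,r,k}=\X_{i,r+1,k}$ supplied by Lemma~\ref{lemma1}. By Lemma~\ref{lemma2}(a), the right-hand side lies in $\bK_{r+1}$. Since $\X_{i,r,k+1}$ and $\X_{i,r,k}$ both lie in $\bK_r$, their difference vanishes in the quotient $\bK_r/\bK_{r+1}$, which is precisely the assertion $\ov{\X}_{i,r,k+1}=\ov{\X}_{i,r,k}$ in $\Gr_{\mathbf{K}}\tUi$.

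For part (b) I would split into the two parities. When $r$ is even, Lemma~\ref{lemma2}(b) already places $\H_{i,r,k}$ in $\bK_{r+1}$, so its class in $\bK_r/\bK_{r+1}$ is zero, giving $\ov{\H}_{i,r,k}=0$ immediately. When $r$ is odd, I would again invoke Lemma~\ref{lemma1} to write $\H_{i,r,k+1}-\H_{i,r,k}=\H_{i,r+1,k}$; since $r+1$ is then even, Lemma~\ref{lemma2}(b) places $\H_{i,r+1,k}$ in $\bK_{r+2}\subseteq\bK_{r+1}$, so the difference once more dies in $\bK_r/\bK_{r+1}$, yielding $\ov{\H}_{i,r,k+1}=\ov{\H}_{i,r,k}$.

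There is no real obstacle here, as the statement is a formal consequence of the two preceding lemmas. The only point requiring a moment of care is the parity bookkeeping in part (b): one must notice that applying the telescoping step to an odd $r$ produces $\H_{i,r+1,k}$ with $r+1$ \emph{even}, so that Lemma~\ref{lemma2}(b) supplies a gain of \emph{two} filtration degrees (landing in $\bK_{r+2}$) rather than one; it is exactly this extra degree that forces the difference to vanish in $\bK_r/\bK_{r+1}$ and simultaneously accounts for why the even-$r$ classes $\ov{\H}_{i,r,k}$ are zero rather than merely shift-invariant.
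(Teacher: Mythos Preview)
Your proof is correct and follows exactly the approach the paper intends: the corollary is stated without proof, as it is an immediate consequence of Lemma~\ref{lemma1} and Lemma~\ref{lemma2} via the telescoping and parity argument you give. One minor remark: in your closing paragraph you say the extra filtration degree (landing in $\bK_{r+2}$) is ``exactly'' what is needed, but in fact membership in $\bK_{r+1}$ already suffices for vanishing in $\bK_r/\bK_{r+1}$; the gain of two degrees is a pleasant bonus, not a necessity.
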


\subsection{The isomorphism between $\Y$ and $\Gr_{\mathbf{K}}\tUi$}

\begin{theorem}\label{thm:qiso}
There is an algebra isomorphism
\begin{align}
  \label{Phimap}
\begin{split}
\Phi: & \Y \longrightarrow \Gr_{\bK}\tUi, 
\\
& \X_{i,r}  \mapsto \ov{\X}_{i,r,1},\quad
\H_{i,s} \mapsto \ov{\H}_{i,s,1}, 
\end{split}
\end{align}
for $i\in \I^0,r\in \N, s\in 2\N+1$. 
\end{theorem}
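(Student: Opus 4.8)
The plan is to establish that $\Phi$ is an isomorphism in three stages: first that $\Phi$ is a well-defined algebra homomorphism; second that the target $\Gr_{\bK}\tUi$ carries a PBW basis indexed exactly as the $\C[\h]$-spanning set of $\Y$ produced in Proposition~\ref{prop:PBWbasis}; and third that $\Phi$ matches these two sets, which forces it to be bijective.

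To see that $\Phi$ is a homomorphism, I would verify that the images $\ov{\X}_{i,r,1}=\Phi(\X_{i,r})$ and $\ov{\H}_{i,s,1}=\Phi(\H_{i,s})$ satisfy each defining relation \eqref{ty0}--\eqref{ty6} of $\Y$. The mechanism of the degeneration is to rewrite the Drinfeld relations \eqref{idef1}--\eqref{idef7} of $\tUi$ in terms of the finite-difference elements $\X_{i,r,k},\H_{i,r,l}$ of \eqref{def:HX}, and then to expand the quantum-integer coefficients $[m]_i$, $\qbinom{m}{s}_i$ and the powers $v_i^{\pm 1}$ as power series in $\h=\ov{v-v^{-1}}$. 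Combining the filtration-degree estimates of Lemma~\ref{lemma2} with the shift-invariance $\ov{\X}_{i,r,k+1}=\ov{\X}_{i,r,k}$ and the vanishing of the even-index $\ov{\H}$ recorded in Corollary~\ref{lemma3}, one reads off the leading term of each relation in the appropriate $\bK_m/\bK_{m+1}$; relation by relation this should reproduce \eqref{ty0}, \eqref{ty5}, \eqref{ty1}, \eqref{ty2}, \eqref{ty3} and the current Serre relations \eqref{ty4}, \eqref{ty6}.

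For the PBW basis of the target I would follow the strategy of \cite{GM12}. Proposition~\ref{prop:GM} presents $\tUiA$ as the free $\A$-module $U(\hg^\omega)\otimes_\C\A$, while the classical computations of Section~\ref{sec:degenerationloop} show that $\psi$ carries $\X_{i,r,k},\H_{i,r,k}$ to $\beta_{i,r,k},\theta_{i,r,k}$, whose images generate $\Gr_{\kappa^\imath}\g[t,t^{-1}]^\omega\cong U(\g[u]^{\check\omega})$. This compatibility should identify $\Gr_{\bK}\tUi$ as a flat $\C[\h]$-deformation whose specialization at $\h=0$ is $U(\g[u]^{\check\omega})$, and in particular should show that the ordered monomials in $\ov{\X}_{\alpha,r,1}$ and $\ov{\H}_{i,s,1}$, for $\alpha\in\cR^+$, $i\in\I^0$, $r\in\N$, $s\in 2\N+1$, form a $\C[\h]$-basis of $\Gr_{\bK}\tUi$. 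These elements in particular generate the target, which yields the surjectivity of $\Phi$.

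It then remains to compare the two indexing sets: $\Phi$ carries the $\C[\h]$-spanning set of $\Y$ from Proposition~\ref{prop:PBWbasis} precisely onto the PBW basis of $\Gr_{\bK}\tUi$ constructed above, so the images are linearly independent, $\Phi$ is injective, and hence an isomorphism (and, as a by-product, the spanning set of $\Y$ is a basis, which is Theorem~\ref{th:B}). I expect the principal obstacle to be the verification of the Serre relations in the homomorphism step, above all the $c_{ij}=-2$ current Serre relation \eqref{ty6}: the quantum Serre relations \eqref{idef4} and \eqref{idef6} carry intricate lower-order corrections whose leading behaviour is awkward to extract term-by-term. The cleanest remedy, which I would adopt, is to recast both sides in generating-function form --- matching \eqref{iDRG4} and \eqref{gen4} for $\tUi$ against \eqref{gconj6} and \eqref{gconj7} for $\Y$ --- and to compare the leading coefficients of the resulting rational expressions, rather than wrestling with the binomial sums componentwise.
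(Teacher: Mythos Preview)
Your proposal is correct and follows essentially the same route as the paper's proof: the paper likewise defers the verification of relations \eqref{ty0}--\eqref{ty6} to later sections (handling \eqref{ty6} via generating functions exactly as you suggest), establishes surjectivity by showing that $\bK_m/\bK_{m+1}$ is spanned by $\h^{m_0}$ times ordered monomials in $\ov{\X}_{\alpha,r,1},\ov{\H}_{i,s,1}$, and proves injectivity by adapting the argument of \cite{GM12} to show these monomials are linearly independent in $\Gr_{\bK}\tUi$. The one technical point the paper makes explicit that you leave implicit is the characterization $\h^{m_0}\mathcal{M}\in\bK_m$ iff $\deg\mathcal{M}\ge m-m_0$, whose ``only if'' direction uses the composite $\tUiA\twoheadrightarrow U(\hg^\omega)\hookrightarrow U(\hg)\twoheadrightarrow U\big(\g\otimes \C[t,t^{-1}]/(t-1)^m\big)$; this is the key step underlying your ``flat deformation'' assertion, and you should spell it out.
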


\begin{proof}
In the subsequent Sections \ref{sec:verify}--\ref{sec:gen}, we will show that the defining relations \eqref{ty0}--\eqref{ty6} for $\Y$ are satisfied for $\ov{\X}_{i,r,1},\ov{\H}_{i,s,1}$ in $\Gr_{\bK}\tUi$. 
Hence $\Phi$ is an algebra homomorphism.

We now show that the map $\Phi$ is surjective. For each $\alpha\in \cR^+$, we fix $i_1,\ldots, i_m\in \I^0$ such that $f_{\alpha}:=[f_{i_1},[f_{i_2},\cdots[f_{i_{m-1}},f_{i_m}]\cdots]]$ is a nonzero element in $\g_{-\alpha}$. Define
\begin{align}
\X_{\alpha,r,k}=\Big[\X_{i_1,0,0},\big[\X_{i_2,0,0},\cdots[\X_{i_{m-1},0,0},\X_{i_m,r,k}]\cdots\big]\Big].
\end{align}
The $\psi$-image of $\X_{\alpha,r,k}$ is identified with $\beta_{\alpha,r,k}$; see Section~\ref{dege:notwisted} for the definition of $\beta_{\alpha,r,k}$. Using arguments similar to the proof of Proposition~\ref{prop:crel}, one can show that $\psi(\X_{\alpha,r,k}),\psi(\H_{i,r,k})$ \big(respectively, $\psi(\X_{\alpha,r,k})$\big) form a spanning set of $\mathrm{K}_r$ for $r$ odd (respectively, even). Moreover, these elements $\X_{\alpha,r,k},\H_{i,r,k}$ lie in $W$ by definition.

On the other hand, note that for any $X\in\psi^{-1}(\mathrm{K}_r)\cap W$, there exists an element $Y$ in the $\C$-span of $\X_{\alpha,r,k},\H_{i,r,k}$ such that $X-Y\in (v-1)\tUiA$, since the kernel of $\psi$ is $(v-1)\tUiA$. It follows that $X-Y\in (v-1)\tUiA \cap W=\{0\}$. Hence, we have shown that $\psi^{-1}(\mathrm{K}_m)\cap W$ is spanned over $\C$ by elements $\X_{\alpha,r,k},\H_{i,r,k}$. Therefore, $\bK_m$ is spanned by the elements of the form $f(v)(v-v^{-1})^{m_0}\mathcal{M}$, where $f(v)$ is not divisible by $v-1$ and
\begin{align}  \label{M}
\mathcal{M}=\X_{\alpha_1,r_1,k_1}\cdots \X_{\alpha_p,r_p,k_p} \H_{i_{1},s_{1},l_{1}}\cdots \H_{i_{q},s_{q},l_{q}}
\end{align}
is a monomial such that $r_1+\ldots +r_p+s_1+\ldots +s_q+m_0\geq m$. By Corollary~\ref{lemma3}, we deduce that $\bK_m/\bK_{m+1}$ is spanned by the following elements
\begin{align}
f(1)\h^{m_0} \ov{\X}_{\alpha_1,r_1,1}\cdots \ov{\X}_{\alpha_p,r_p,1} \ov{\H}_{i_{1},s_{1},1}\cdots \ov{\H}_{i_{q},s_{q},1},\qquad r_1+\ldots r_p+s_1+\ldots +s_q+m_0\geq m.
\end{align}
Hence the surjectivity of $\Phi$ follows.

We next show that the map $\Phi$ is injective. By Proposition~\ref{prop:PBWbasis}, the ordered monomials of $\X_{\alpha,r},\H_{i,s}$ for $\alpha\in \cR^+,i\in \I^0,r\in \N, s\in 2\N+1$ form a $\C[\h]$-spanning set of $\Y$. It remains to show that their images form a basis in $\Gr_{\mathbf{K}}\tUi$. In fact, we will adapt the arguments in \cite[Proof of Theorem 3.2]{GM12} to establish a $\C$-basis of $\bK_m/\bK_{m+1}$ for each $m$.

Let $\mathcal{M}$ be an ordered monomial of $\X_{\alpha,r,1},\H_{i,s,1}$ for $r,s\geq 0,s$ odd. The element $\psi(\mathcal{M})$ has the form $\beta_{\alpha_1,r_1,1}\cdots \beta_{\alpha_p,r_p,1} \theta_{i_{1},s_{1},1}\cdots \theta_{i_{q},s_{q},1}$; these monomials form a PBW basis of $U(\hg^\omega)$. By Proposition~\ref{prop:GM}, such ordered monomials of $\X_{\alpha,r,1},\H_{i,s,1}$ form an $\A$-basis of $\tUi_\A$. We set $\h$ to be $ v-v^{-1}$ in $\A=\C[v]_{(v-1)}$.  Hence, the set
$\{\h^m \mathcal{M} \}$ is linearly independent over $\C$ in $\tUi_\A$. 

We determine when an element $\h^m \mathcal{M}$ (with $\mathcal{M}$ in \eqref{M}) lies in $\bK_m$. If $\h^s \mathcal{M}\in \bK_m$, then $\h^{s-t} \mathcal{M}\in \bK_{m-t}$. We define the degree of $\mathcal{M}$ by the sum of second indices of $\X_{\alpha,r,1},\H_{i,s,1}$ appearing in $\mathcal{M}$. If $\mathcal{M}$ has degree at least $m$, then $\mathcal{M}\in \bK_m$. Conversely, if $\mathcal{M}\in \bK_m$, then the image of $\mathcal{M}$ under $\tUiA\twoheadrightarrow U(\hg^\omega)\hookrightarrow U(\hg)\twoheadrightarrow U\Big(\g\otimes \big(\C[t,t^{-1}]/(t-1)^m\big)\Big)$ is $0$ and hence the degree of $\mathcal{M}$ is at least $m$. Therefore, $\h^{m_0} \mathcal{M}\in \bK_m$ if and only if the degree of $\mathcal{M}$ is at least $m-m_0$.

We claim that the elements $\ov{\h^{m_0} \mathcal{M}}$, where $0\leq m_0\leq m$ and the degree of $\mathcal{M}$ equals $m-m_0$, form a $\C$-basis of $\bK_m/\bK_{m+1}$. In the proof of the surjectivity of $\Phi$, we have showed that these elements form a spanning set. It suffices to show that such elements are linearly independent. Suppose that we have a linear combination $\sum_i a_i\ov{\h^{m_i}\mathcal{M}_i}=0$ for some monomial $\mathcal{M}_i$ with degree $m-m_i$, $a_i\in \C$. Then we have 
\begin{align}
\sum_i a_i \h^{m_i}\mathcal{M}_i \in \bK_{m+1}.
\end{align}
Since the left-hand side has a total degree $m$, this identity forces that $\sum_i a_i \h^{m_i}\mathcal{M}_i=0$ and hence $a_i=0$ for all $i$. Hence, the claim is proved.
\end{proof}

\begin{theorem}
 \label{thm:PBW}
The ordered monomials of $\{\X_{\alpha,r},\H_{i,s} \mid \alpha\in \cR^+,i\in \I^0,r\in \N, s\in 2\N+1 \}$ form a basis of $\Y$. Hence $\Y$ is a flat $\N$-graded deformation of $U\big(\g[u]^{\check\omega}\big)$ over $\C[\hbar]$, and in particular isomorphic to $U\big(\g[u]^{\check\omega}\big)[\hbar]$ as a graded $\C[\hbar]$-module. Here $U\big(\g[u]^{\check\omega}\big)[\h]=U\big(\g[u]^{\check\omega}\big)\otimes_{\C}\C[\h]$ is given the natural $\C[\h]$-module structure.
\end{theorem}

\begin{proof}
These monomials form a spanning set by Proposition~\ref{prop:PBWbasis} and they are linearly independent by the proof of Theorem~\ref{thm:qiso}. The second part follows from this basis result and the comparison of the presentations of $\Y$ in Definition~ \ref{def:YN} and of $U\big(\g[u]^{\check\omega}\big)$ in Proposition~ \ref{prop:crel}.
\end{proof}


We define a filtration on $\Y$ by setting $\widetilde\deg \, \h=0,\widetilde\deg \, \H_{i,s}=s, \widetilde\deg \, \X_{i,m}=m$, and denote by $\widetilde{\text{gr}}\,\Y$ the associated graded algebra of $\Y$. Let $\widetilde{\H}_{i,s}, \widetilde{\X}_{i,m}$ denote the images of $\H_{i,s},\X_{i,m}$ in  $\widetilde{\text{gr}}\,\Y$. 

\begin{corollary}
 \label{cor:PBW2}
Let $\Y$ be a twisted Yangian of split type (excluding type $G_2$). There is a graded $\C[\h]$-algebra isomorphism  
\[
\iota: U\big(\g[u]^{\check\omega}\big)[\h] \stackrel{\cong}{\longrightarrow} \widetilde{\mathrm{gr}}\,\Y
\]
which sends $2 t_{i,s}\mapsto \widetilde{\H}_{i,s}, x_{i,m}\mapsto  \widetilde{\X}_{i,m}$. 
\end{corollary}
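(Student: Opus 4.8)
The plan is to exhibit $\Y$ (specialized at $\h=1$, as in the $\C$-algebra variant recorded after Definition~\ref{def:YN}, so that the $\h$-terms are genuine scalars) as a filtered deformation of $U(\g[u]^{\check\omega})$ whose associated graded for the filtration $\widetilde\deg\,\H_{i,s}=s$, $\widetilde\deg\,\X_{i,m}=m$ is $U(\g[u]^{\check\omega})$ itself. I would work throughout with the two reduced presentations already at hand, namely Theorem~\ref{th:C} for $\Y$ and Proposition~\ref{prop:reduced relation} for $U(\g[u]^{\check\omega})$, since these trade the unwieldy current Serre relations \eqref{ty4}--\eqref{ty6} for the homogeneous finite-type Serre relations and thereby make the comparison transparent.

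First I would check that $\iota$ is a well-defined algebra homomorphism, i.e. that $\tfrac12\widetilde{\H}_{i,s}$ and $\widetilde{\X}_{i,m}$ (the would-be images of $t_{i,s}$ and $x_{i,m}$) satisfy the defining relations \eqref{eq:classical1}--\eqref{eq:classical3} and \eqref{eq:classical7a}--\eqref{eq:classical9} inside $\widetilde{\text{gr}}\,\Y$. The mechanism is a single degree bookkeeping: in each of the current relations \eqref{ty0}, \eqref{bhalt} (equivalently \eqref{ty5} together with \eqref{ty1}) and \eqref{ty2}, every term carrying a positive power of $\h$ has strictly smaller $\widetilde\deg$ than the commutator terms and hence dies in the leading symbol. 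Passing to $\widetilde{\text{gr}}\,\Y$ thus yields precisely $[\widetilde{\H}_{i,r},\widetilde{\H}_{j,s}]=0$, $[\widetilde{\H}_{i,s},\widetilde{\X}_{j,r}]=2c_{ij}\widetilde{\X}_{j,r+s}$ and $[\widetilde{\X}_{i,k},\widetilde{\X}_{j,m+1}]-[\widetilde{\X}_{i,k+1},\widetilde{\X}_{j,m}]=2\delta_{ij}(-1)^k\widetilde{\H}_{i,k+m+1}$, which become \eqref{eq:classical1}--\eqref{eq:classical3} after the normalization $2t_{i,s}\mapsto\widetilde{\H}_{i,s}$; here one uses that $\widetilde{\H}_{i,k+m+1}\neq 0$ forces $k\equiv m\pmod 2$, so that $2(-1)^k=(-1)^k+(-1)^m$ exactly when the right-hand side of \eqref{eq:classical3} is nonzero. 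The finite-type Serre relations are immediate: those in Theorem~\ref{th:C} are $\h$-free and $\widetilde\deg$-homogeneous of degree $0$, so they already hold in $\Y$ and descend verbatim to $\widetilde{\text{gr}}\,\Y$, matching \eqref{eq:classical7a}--\eqref{eq:classical9}. This makes $\iota$ a graded homomorphism, and it is surjective because $\widetilde{\H}_{i,s},\widetilde{\X}_{i,m}$ generate $\widetilde{\text{gr}}\,\Y$.

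It remains to see that $\iota$ is injective, which I would settle by a PBW/dimension comparison. The map $\iota$ is graded, sending the degree-$m$ generator $x_{\alpha,m}$ to the degree-$m$ element $\big[\widetilde{\X}_{i_1,0},[\cdots,\widetilde{\X}_{i_m,m}]\big]$ (cf. \eqref{def:bfalpha}), so it is degreewise surjective and hence $\dim_\C(\widetilde{\text{gr}}\,\Y)_d\le\dim_\C U(\g[u]^{\check\omega})_d$ for every $d$. On the other hand, Theorem~\ref{thm:PBW} provides a basis of $\Y$ by ordered monomials in $\X_{\alpha,r},\H_{i,s}$, each of filtration degree at most the sum of its second indices; comparing this with the classical PBW basis of $U(\g[u]^{\check\omega})$ from Proposition~\ref{prop:crel} (whose monomials share the same combinatorics) gives the reverse inequality $\dim_\C(\widetilde{\text{gr}}\,\Y)_d\ge\dim_\C U(\g[u]^{\check\omega})_d$ in each degree. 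Equality throughout forces $\iota$ to be a degreewise isomorphism, completing the proof.

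The only genuinely nontrivial ingredient is the degree bookkeeping of the first step, namely confirming that all $\h$-correction terms drop in the leading symbol; once the reduced presentations are used this is routine, and in particular no current-form Serre computation is required. Conceptually the corollary merely makes precise the flatness remark preceding it: the leading symbols of the defining relations of $\Y$ are exactly their $\h=0$ specializations, and by Proposition~\ref{prop:crel} these present $U(\g[u]^{\check\omega})$.
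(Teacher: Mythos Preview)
Your argument is essentially correct and close in spirit to the paper's, but there is a logical dependency issue you should fix. You propose to use Theorem~\ref{th:C} (the reduced presentation of $\Y$) as an input. In the paper's logical order, however, Theorem~\ref{thm:Yreduced} is proved \emph{using} Corollary~\ref{cor:PBW2}: its proof invokes the isomorphism $\iota$ to show that ${}'\Y$ and $\Y$ have the same associated graded. So appealing to Theorem~\ref{th:C} here is circular. Fortunately, your actual argument does not need Theorem~\ref{th:C} as a presentation of $\Y$; it only needs that the finite-type Serre relations \eqref{fSerre0}--\eqref{fSerre2} \emph{hold} in $\Y$, and this follows immediately by specializing all indices to $0$ in the defining relations \eqref{ty3}--\eqref{ty6}. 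With that one-line replacement, the circularity disappears and your verification that $\iota$ is a well-defined homomorphism via Proposition~\ref{prop:reduced relation} goes through.

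With that fix, your approach genuinely differs from the paper's in one useful respect: the paper checks the full current Serre relations \eqref{eq:classical4'}--\eqref{eq:classical6} of Proposition~\ref{prop:crel} by taking leading symbols of \eqref{ty3}--\eqref{ty6}, whereas you bypass this by using the reduced presentation of $U(\g[u]^{\check\omega})$ from Proposition~\ref{prop:reduced relation} and only verifying the degree-zero Serre relations. This is a legitimate and cleaner shortcut. For the isomorphism step, both arguments ultimately rest on the same PBW comparison (Theorem~\ref{thm:PBW} versus the PBW basis in the proof of Proposition~\ref{prop:crel}); the paper states directly that $\iota$ carries one PBW basis to the other, while you phrase it as a degreewise dimension count, which amounts to the same thing once one notes that the PBW monomials of $\Y$ with index-sum $\le d$ span $F_d$ (a consequence of the straightening implicit in Proposition~\ref{prop:PBWbasis}).
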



\subsection{A reduced presentation for $\Y$}

We give an alternative and simpler presentation of the twisted Yangian $\Y$ defined in Definition \ref{def:YN}. This new presentation can be useful in practice as it is much easier to verify. 

\begin{theorem}
\label{thm:Yreduced}
    The twisted Yangian $\Y$ has the following  presentation as a $\C[\h]$-algebra with generators $\H_{i,s},\X_{j,r}$, for $s\in 2\N+1,r\in\N,i,j\in \mathbb{I}^0$, subject to the relations \eqref{ty0}--\eqref{ty2} 
and the finite type Serre relations \eqref{fSerre0}--\eqref{fSerre2} below:
\begin{align}
\label{fSerre0}
 [b_{i,0},b_{j,0}] 
&=0 \qquad\qquad\qquad\;\; (c_{ij}=0),
\\
\label{fSerre1}
 \big[b_{i,0},[b_{i,0},b_{j,0}]\big] 
&=- b_{j,0} \qquad\qquad\; (c_{ij}=-1),
\\
\label{fSerre2}
 \Big[b_{i,0},\big[b_{i,0},[b_{i,0},b_{j,0}]\big]\Big] 
 &= - 4 [b_{i,0},b_{j,0}] \qquad (c_{ij}=-2).
\end{align}
\end{theorem}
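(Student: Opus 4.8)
The plan is to present the reduced relations as an auxiliary algebra and identify it with $\Y$ by a flatness argument, thereby avoiding a direct hand-derivation of the current Serre relations. Let $\widetilde{\Y}$ denote the $\C[\h]$-algebra generated by $\H_{i,s},\X_{j,r}$ subject to \eqref{ty0}--\eqref{ty2} and the finite type Serre relations \eqref{fSerre0}--\eqref{fSerre2}. Each finite type Serre relation is precisely the specialization of the corresponding current Serre relation at all indices equal to $0$: for instance, evaluating \eqref{ty4} at $k_1=k_2=r=0$ gives $2\big[b_{i,0},[b_{i,0},b_{j,0}]\big]=-2\,b_{j,0}$, which is \eqref{fSerre1}, and similarly \eqref{ty3} and \eqref{ty6} specialize to \eqref{fSerre0} and \eqref{fSerre2}. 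Hence every defining relation of $\widetilde{\Y}$ holds in $\Y$, and there is a surjective $\C[\h]$-algebra homomorphism $q\colon\widetilde{\Y}\twoheadrightarrow\Y$ fixing the generators. It remains to prove that $q$ is injective.

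First I would give both algebras the $\Z_{\ge 0}$-grading determined by $\widetilde\deg\,\X_{i,m}=m$, $\widetilde\deg\,\H_{i,s}=s$, and---crucially, and in contrast to the filtration of Corollary~\ref{cor:PBW2}---$\widetilde\deg\,\h=1$. A direct inspection shows that every relation \eqref{ty0}--\eqref{ty6} and \eqref{fSerre0}--\eqref{fSerre2} is homogeneous for this assignment (on the right-hand sides of \eqref{ty4} and \eqref{ty6} each power of $\h$ exactly compensates the drop in the generator indices), so $\widetilde{\Y}$ and $\Y$ are graded $\C$-algebras and $q$ is a graded map. Next I would identify the fibres at $\h=0$: under $\H_{i,s}\leftrightarrow 2t_{i,s}$, $\X_{i,m}\leftrightarrow x_{i,m}$, the relations \eqref{ty0}--\eqref{ty2} with $\h=0$ are equivalent to the classical relations \eqref{eq:classical1}--\eqref{eq:classical3} (the central term matches because $\H_{i,r+s+1}=0$ unless $r\equiv s$). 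Thus $\widetilde{\Y}/\h\widetilde{\Y}$ is presented by \eqref{eq:classical1}--\eqref{eq:classical3} together with the finite type Serre relations, which is $U(\g[u]^{\check\omega})$ by Proposition~\ref{prop:reduced relation}, while $\Y/\h\Y\cong U(\g[u]^{\check\omega})$ by Proposition~\ref{prop:crel}; under these identifications the induced map $\bar q\colon\widetilde{\Y}/\h\widetilde{\Y}\to\Y/\h\Y$ is the identity, hence an isomorphism.

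To conclude I would use flatness. By Theorem~\ref{thm:PBW}, $\Y$ is a free $\C[\h]$-module, so $\mathrm{Tor}_1^{\C[\h]}(\Y,\C[\h]/\h)=0$; applying $-\otimes_{\C[\h]}\C[\h]/\h$ to $0\to\ker q\to\widetilde{\Y}\xrightarrow{q}\Y\to 0$ yields the exact sequence $0\to\ker q/\h\ker q\to\widetilde{\Y}/\h\widetilde{\Y}\xrightarrow{\bar q}\Y/\h\Y\to 0$. Since $\bar q$ is an isomorphism, $\ker q=\h\ker q$. Now $\ker q$ is a graded submodule of the $\Z_{\ge 0}$-graded algebra $\widetilde{\Y}$ and multiplication by $\h$ raises degree by one, so for homogeneous elements we have $(\ker q)_d\subseteq \h\,(\ker q)_{d-1}$; since there is nothing in negative degrees, an induction on $d$ (with $(\ker q)_{-1}=0$) gives $(\ker q)_d=0$ for all $d\ge 0$. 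Hence $\ker q=0$ and $q$ is an isomorphism.

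The step I expect to require the most care is the pair of inputs to the $\h=0$ comparison: verifying homogeneity of the elaborate right-hand side of \eqref{ty6} for the assignment $\widetilde\deg\,\h=1$, and checking that the $\h=0$ fibre of $\widetilde{\Y}$ reproduces the reduced classical presentation of Proposition~\ref{prop:reduced relation} on the nose, including the parity constraints encoded by $\delta_{k,ev}$ and by $\H_{i,s}=0$ for $s$ even. A more computational alternative would mirror the proof of Proposition~\ref{prop:reduced relation} line by line, deriving \eqref{ty3}, \eqref{ty4}, \eqref{ty6} inside $\widetilde{\Y}$ by inducting on the largest index with the raising operators assembled from \eqref{ty5}; this is feasible but delicate, since the only clean raising operator $\ad\H_{i,1}$ shifts an index by one and annihilates the $\H_i$-indices occurring on the right-hand sides, forcing intricate parity bookkeeping that the flatness argument neatly circumvents.
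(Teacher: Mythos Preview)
Your argument is correct, and it reaches the same conclusion as the paper by a genuinely different technical route. The paper also begins with the surjection from the reduced-presentation algebra onto $\Y$, but then passes to the associated graded for the filtration of Corollary~\ref{cor:PBW2} (where $\h$ is treated as a scalar of degree~$0$), and constructs an explicit inverse $\widetilde{\text{gr}}\,\Y \to \widetilde{\text{gr}}\,{}'\Y$ by composing $\iota^{-1}$ with the map furnished by Proposition~\ref{prop:reduced relation}; the filtered map is then an isomorphism by the usual graded-to-filtered argument. Your approach instead upgrades the filtration to a genuine $\Z_{\ge 0}$-grading by assigning $\widetilde\deg\,\h=1$ (a pleasant observation the paper does not make explicit), compares the fibres at $\h=0$ via Propositions~\ref{prop:crel} and~\ref{prop:reduced relation}, and then finishes with a Tor/flatness plus graded Nakayama argument using the freeness of $\Y$ over $\C[\h]$ from Theorem~\ref{thm:PBW}.

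What each buys: the paper's route is more elementary---no homological algebra, and the inverse is exhibited concretely---while yours is more automatic once the grading is in place, and it makes the role of flatness transparent. Your homogeneity check for \eqref{ty6} is the one place requiring care, but it goes through: in every summand the total power of $\h$ exactly compensates the shift in the generator indices, so the relation sits in degree $k_1+k_2+k_3+r$. Your identification $\Y/\h\Y\cong U(\g[u]^{\check\omega})$ can also be read off directly from Theorem~\ref{thm:PBW} (the PBW monomials give matching bases), bypassing the component-wise verification that the $\h=0$ specialization of \eqref{ty6} reproduces \eqref{eq:classical5}.
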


\begin{proof}
    Let us denote by ${}'\Y$ the $\C[\h]$-algebra with presentation given in the statement of the theorem. As ${}'\Y$ and $\Y$ have the same generators and ${}'\Y$ has fewer relations than $\Y$, we clearly have a surjective homomorphism $f: {}'\Y \rightarrow \Y$. Taking the associated graded as in Corollary \ref{cor:PBW2}, we obtain a surjective homomorphism $\widetilde{f}  : \widetilde{\text{gr}}\,{}'\Y \rightarrow \widetilde{\text{gr}}\,\Y$. 

    On the other hand, we have an algebra isomorphism $\iota: U\big(\g[u]^{\check\omega}\big) \rightarrow \widetilde{\text{gr}}\,\Y$, which matches the generators. By the presentation of $U\big(\g[u]^{\check\omega}\big)$ in Proposition~\ref{prop:reduced relation}, we also have a natural homomorphism $U\big(\g[u]^{\check\omega}\big) \rightarrow \widetilde{\text{gr}}\,{}'\Y$, which match the generators. By composition of $\iota^{-1}$ with this, we obtain a homomorphism $\widetilde{\text{gr}}\,\Y \rightarrow \widetilde{\text{gr}}\, {}'\Y$; this homomorphism is clearly the inverse to $\widetilde{f}$ as seen on generators. Hence, we have proved that $\widetilde{f} : \widetilde{\text{gr}}\, {}'\Y \rightarrow \widetilde{\text{gr}}\,\Y$ is an isomorphism, and so is $f: {}'\Y \rightarrow \Y$. 
\end{proof}

\section{Verification of the relations} \label{sec:verify}

 In this section, we show that the defining relations \eqref{ty0}--\eqref{ty4} of $\Y$ hold for the corresponding generators in $\Gr_{\mathbf{K}}\tUi$ under the map $\Phi:\Y\rightarrow \Gr_{\mathbf{K}}\tUi$ in \eqref{Phimap}. The verification of the relation \eqref{ty6} is postponed to Section~ \ref{sec:gen}. This implies that the map $\Phi:\Y\rightarrow \Gr_{\mathbf{K}}\tUi$ is an algebra homomorphism. 

\subsection{Relation \eqref{ty0}} We claim that the following relation holds in $\Gr_{\mathbf{K}}\tUi$:
\begin{align*}
[\oH_{i,s,1},\oH_{j,r,1}]=0.
\end{align*}
Indeed, taking a summation with respect to $k,l$ in \eqref{idef1}, we obtain that 
$[\H_{i,s,1},\H_{j,r,1}]=0.$
Hence, in $\bK_{r+s}/ \bK_{r+s+1}$, the desired relation follows.

\subsection{Relation \eqref{ty5}}

We will often write $a=b+O(\h^r)$, which means that $a-b$ is an element in $\bK_{r}$. 

We verify the relation \eqref{ty5} in $\Gr_{\mathbf{K}}\tUi$ using the first few cases of \eqref{idef2}.

\begin{proposition}
The following relation holds in $\Gr_{\mathbf{K}}\tUi$:
\begin{equation}\label{deg5''}
[\ov{\H}_{i,1,1},\ov{\X}_{j,r,l}]=2 c_{ij}\ov{\X}_{j,r+1,l}.
\end{equation}
\end{proposition}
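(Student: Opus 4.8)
The plan is to pass from the $\Theta$-generators appearing in $\H_{i,1,1}$ to the logarithmic generators $H_{i,k}$, where the commutator with $B_{j,m}$ is given explicitly by \eqref{idef2}, and then read off the leading term of the resulting expression in the associated graded. First I would use the generating function identity \eqref{ThetaH} to expand $\Theta_{i,1}=H_{i,1}$ and $\Theta_{i,2}=H_{i,2}+\tfrac12(v_i-v_i^{-1})H_{i,1}^2$, so that by \eqref{def:HX}
\[
\H_{i,1,1}=\Theta_{i,2}-\Theta_{i,1}=(H_{i,2}-H_{i,1})+\tfrac12(v_i-v_i^{-1})H_{i,1}^2 .
\]
Since $H_{i,1}=\Theta_{i,1}\in\bK_1$ (Lemma~\ref{lemma2}) and $v_i-v_i^{-1}\in(v-v^{-1})\tUiA\subset\bK_1$, the correction term lies in $\bK_1\bK_1\bK_1=\bK_3$. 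As $\bK_\bullet$ is a filtration by two-sided ideals with $\bK_a\bK_b\subseteq\bK_{a+b}$, its commutator with $\X_{j,r,l}\in\bK_r$ lands in $\bK_{r+3}\subset\bK_{r+2}$ and so is killed in $\bK_{r+1}/\bK_{r+2}$. Hence only the main term $H_{i,2}-H_{i,1}$ contributes to \eqref{deg5''}.

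Next I would compute the main commutator. Applying \eqref{idef2} in the two cases $k=1,2$ gives
\[
[H_{i,2}-H_{i,1},B_{j,m}]=\tfrac{[2c_{ij}]_i}{2}(B_{j,m+2}-B_{j,m-2})-[c_{ij}]_i(B_{j,m+1}-B_{j,m-1}).
\]
Writing $E$ for the shift $B_{j,m}\mapsto B_{j,m+1}$, definition \eqref{def:HX} reads $\X_{j,r,l}=(E-1)^rB_{j,l}$, and Lemma~\ref{lemma1} is exactly $\X_{j,r+1,l}=(E-1)\X_{j,r,l}$; applying $(E-1)^r$ to the previous display therefore yields $[H_{i,2}-H_{i,1},\X_{j,r,l}]=(E-1)^r\,\mathcal C(E)\,B_{j,l}$ with $\mathcal C(E)=\tfrac{[2c_{ij}]_i}{2}(E^2-E^{-2})-[c_{ij}]_i(E-E^{-1})$. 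I would then expand $\mathcal C(E)$ in powers of $(E-1)$: one finds $\mathcal C(1)=0$ and $\mathcal C'(1)=2[2c_{ij}]_i-2[c_{ij}]_i$, so $(E-1)^r\mathcal C(E)$ contains no power of $(E-1)$ below $(E-1)^{r+1}$, and its $(E-1)^{r+1}$-coefficient equals $\mathcal C'(1)$. Using $(E-1)^sB_{j,l}=\X_{j,s,l}\in\bK_s$ together with the fact that a scalar $f(v)\in\A$ acts on $\bK_{r+1}/\bK_{r+2}$ through $f(1)$ (because $(f(v)-f(1))\bK_{r+1}\subset(v-1)\bK_{r+1}\subseteq\bK_{r+2}$), the image of $[H_{i,2}-H_{i,1},\X_{j,r,l}]$ in $\bK_{r+1}/\bK_{r+2}$ is $\mathcal C'(1)\big|_{v=1}\,\ov{\X}_{j,r+1,l}$.

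Finally I would evaluate the coefficient: since $[m]_i\big|_{v=1}=m$, we get $\mathcal C'(1)\big|_{v=1}=2\cdot 2c_{ij}-2c_{ij}=2c_{ij}$, which is precisely the right-hand side of \eqref{deg5''} (the dependence on $l$ having already disappeared in the associated graded by Corollary~\ref{lemma3}). The only genuinely delicate point is the filtration bookkeeping: one must confirm that the correction term truly sits three filtration steps higher and so drops out, and that no power of $(E-1)$ below $(E-1)^{r+1}$ survives, for which the vanishing $\mathcal C(1)=0$ is the decisive input. Everything else reduces to the two instances $k=1,2$ of \eqref{idef2} and the elementary expansion of $\mathcal C(E)$ about $E=1$.
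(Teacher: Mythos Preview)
Your argument is correct and follows the same route as the paper: both rewrite $\H_{i,1,1}=(H_{i,2}-H_{i,1})+\tfrac12(v_i-v_i^{-1})H_{i,1}^2$, discard the second summand on filtration grounds, apply \eqref{idef2} for $k=1,2$, and extract the leading contribution in $\bK_{r+1}/\bK_{r+2}$ via Corollary~\ref{lemma3}. The only phrase worth tightening is ``expand $\mathcal C(E)$ in powers of $(E-1)$'': since $\mathcal C$ is a Laurent polynomial this expansion is formally infinite, and the clean substitute is the factorization $\mathcal C(E)=(E-1)\mathcal D(E)$ with $\mathcal D$ still a Laurent polynomial, so that $(E-1)^r\mathcal C(E)B_{j,l}$ is a \emph{finite} $\A$-linear combination of $\X_{j,r+1,m}$'s whose coefficients sum to $\mathcal D(1)=\mathcal C'(1)$---which is exactly how the paper's expression $c_{ij}(\X_{j,r+1,l+1}+\X_{j,r+1,l-2})+O(\h^{r+2})$ arises.
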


\begin{proof}
Setting $k=1,2$ in \eqref{idef2}, we have
\begin{align*}
[H_{i,1},B_{j,l}]&= [ c_{ij}]_i B_{j,l+1}- [ c_{ij}]_i B_{j,l-1},
\\
[H_{i,2},B_{j,l}]&=\frac{[2c_{ij}]_i}{2}B_{j,l+2}-\frac{[2c_{ij}]_i}{2}B_{j,l-2}.
\end{align*}

Note by definition that $\Theta_{i,1}=H_{i,1}$ and $\Theta_{i,2}=H_{i,2}+\frac{v_i-v_i^{-1}}{2!}H_{i,1}^2$.
Since $\H_{i,1,1}=\Theta_{i,2}-\Theta_{i,1}$, we take the difference of the above two identities and obtain that 
\begin{equation}
[\H_{i,1,1},\X_{j,r,l}]
= c_{ij}\big(\X_{j,r+1,l+1}+\X_{j,r+1,l-2}\big)+O(\h^{r+2}).
\end{equation}
Thus, by Corollary \ref{lemma3}, the desired relation follows in $\bK_{r+1}/ \bK_{r+2}$.
\end{proof}

\subsection{Relation \eqref{ty1}} 

We verify the relation \eqref{ty2} in $\Gr_{\mathbf{K}}\tUi$  using \eqref{idef2'}, which is known by Proposition \ref{prop:equiv} to be equivalent to \eqref{idef2}.

\begin{proposition}
The following relation holds in $\Gr_{\mathbf{K}}\tUi$:
\begin{align}\label{deg2'}
\begin{split}
&[\ov{\X}_{j,r,1},\ov{\H}_{i,s+2,1}]-[\ov{\X}_{j,r+2,1},\ov{\H}_{i,s,1}]
\\=&-\h d_i c_{ij}( \ov{\X}_{j,r+1,1} \ov{\H}_{i,s,1} +  \ov{\H}_{i,s,1} \ov{\X}_{j,r+1,1})+ \frac{\h^2 d_i^2 c^2_{ij}}{4}[ \ov{\X}_{j,r,1}, \ov{\H}_{i,s,1}].
\end{split}
\end{align}
\end{proposition}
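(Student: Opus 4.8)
The plan is to deduce \eqref{deg2'} from the reformulated relation \eqref{idef2'}, which by Proposition~\ref{prop:equiv} is equivalent to \eqref{idef2}, by applying iterated finite differences and then reading off the top graded component. Regarding \eqref{idef2'} as an identity valid for all $k,l$, I would apply to it the $r$-th forward difference in the index $l$ of the $B$-factors together with the $s$-th forward difference in the index $k$ of the $\Theta$-factors. In each monomial exactly one factor is a $B$ with index near $l$ and the other a $\Theta$ with index near $k$, so the two operators act on independent variables and, by the definitions \eqref{def:HX}, send $B_{j,l+a}\mapsto \X_{j,r,l+a}$ and $\Theta_{i,k+b}\mapsto \H_{i,s,k+b}$. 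This produces
\begin{align*}
[\X_{j,r,l},\H_{i,s,k+2}]&+[\X_{j,r,l},\H_{i,s,k}]
+v_i^{c_{ij}}\big(\H_{i,s,k+1}\X_{j,r,l+1}-\X_{j,r,l-1}\H_{i,s,k+1}\big)
\\
&+v_i^{-c_{ij}}\big(\H_{i,s,k+1}\X_{j,r,l-1}-\X_{j,r,l+1}\H_{i,s,k+1}\big)=0.
\end{align*}

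Next I would reorganize this identity. By Lemma~\ref{lemma1} one has $\H_{i,s,k+2}+\H_{i,s,k}=\H_{i,s+2,k}+2\H_{i,s,k+1}$, splitting the commutator terms into a degree $r+s+2$ part $[\X_{j,r,l},\H_{i,s+2,k}]$ and a lower degree part $2[\X_{j,r,l},\H_{i,s,k+1}]$. For the scalar factors I would expand $v_i^{\pm c_{ij}}=1\pm\tfrac{d_ic_{ij}\h}{2}+\tfrac{d_i^2c_{ij}^2\h^2}{8}+O(\h^3)$ in terms of $\h=\ov{v-v^{-1}}$, which is precisely where the coefficients $\h d_ic_{ij}$ and $\tfrac14\h^2d_i^2c_{ij}^2$ of \eqref{deg2'} originate, and then use Lemma~\ref{lemma1} once more to rewrite the combinations $\X_{j,r,l+1}\pm\X_{j,r,l-1}$ through $\X_{j,r,l}$, $\X_{j,r+1,\bullet}$ and $\X_{j,r+2,\bullet}$. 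Collecting by powers of $\h$ and discarding everything in $\bK_{r+s+3}$, the two copies of $2[\X_{j,r,l},\H_{i,s,k+1}]$---one from the commutator terms and one from the degree-zero part of the scalar expansion---cancel, leaving, modulo $\bK_{r+s+3}$,
\begin{align*}
[\X_{j,r,l},\H_{i,s+2,k}]-[\X_{j,r+2,l-1},\H_{i,s,k+1}]
+\tfrac{\h d_ic_{ij}}{2}\big\{\H_{i,s,k+1},\X_{j,r+1,l}+\X_{j,r+1,l-1}\big\}
-\tfrac{\h^2d_i^2c_{ij}^2}{4}[\X_{j,r,l},\H_{i,s,k+1}]=0.
\end{align*}

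Finally I would pass to $\bK_{r+s+2}/\bK_{r+s+3}$ and invoke Corollary~\ref{lemma3}: since $s$ and $s+2$ are odd, $\ov{\H}_{i,s,k}=\ov{\H}_{i,s,1}$ and $\ov{\H}_{i,s+2,k}=\ov{\H}_{i,s+2,1}$ are independent of $k$, while $\ov{\X}_{j,\bullet,l}=\ov{\X}_{j,\bullet,1}$ is independent of $l$; in particular $\ov{\X}_{j,r+1,l}+\ov{\X}_{j,r+1,l-1}$ becomes $2\ov{\X}_{j,r+1,1}$, turning the middle term into the anticommutator $\h d_ic_{ij}\{\ov{\X}_{j,r+1,1},\ov{\H}_{i,s,1}\}$, and \eqref{deg2'} follows. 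The main obstacle is the degree bookkeeping: one must track exactly which $\bK_m$ each product occupies, carry the expansion of $v_i^{\pm c_{ij}}$ to second order, and confirm that the $\bK_{r+s}$ and $\bK_{r+s+1}$ contributions cancel so that the surviving identity genuinely lives in $\bK_{r+s+2}/\bK_{r+s+3}$. The repeated conversions via Lemma~\ref{lemma1}, performed consistently with these degree constraints, are where all the care lies; once the cancellation of the leading $[\X,\H]$ terms is spotted, the remaining algebra is routine.
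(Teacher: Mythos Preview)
Your proposal is correct and follows essentially the same route as the paper: both start from \eqref{idef2'}, apply the finite differences in $k$ and $l$ to produce the identity in $\X_{j,r,\bullet}$ and $\H_{i,s,\bullet}$, use Lemma~\ref{lemma1} to convert the index shifts into increments of $r$ and $s$, expand $v_i^{\pm c_{ij}}$ to second order in $\h$, and then pass to $\bK_{r+s+2}/\bK_{r+s+3}$ via Corollary~\ref{lemma3}. Your organization of the computation---first isolating $\H_{i,s+2,k}+2\H_{i,s,k+1}$ and then tracking the cancellation of the $2[\X_{j,r,l},\H_{i,s,k+1}]$ terms---is slightly cleaner than the paper's intermediate display, but the content is the same.
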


\begin{proof}
Taking a summation with respect to $k,l$ in \eqref{idef2'}, we have
\begin{align*}
[\X_{j,r,l},\H_{i,s,k+2}]+[\X_{j,r,l},\H_{i,s,k} ]&-v_i^{c_{ij}}\X_{j,r,l-1}\H_{i,s,k+1} -v_i^{-c_{ij}}\X_{j,r,l+1}\H_{i,s,k+1}\\
&+v_i^{c_{ij}}\H_{i,s,k+1}\X_{j,r,l+1}+v_i^{-c_{ij}}\H_{i,s,k+1}\X_{j,r,l-1} =0.
\end{align*}
Hence, using Lemma~\ref{lemma1}, we have 
\begin{align*}
&[\X_{j,r,l},\H_{i,s+1,k+1}]-[\X_{j,r+1,l},\H_{i,s,k+1}]+[\X_{j,r+1,l-1},\H_{i,s,k}]-[\X_{j,r,l-1},\H_{i,s+1,k}]\\
=&\frac{\h d_i c_{ij}}{2}(\X_{j,r,l-1}\H_{i,s,k+1} - \X_{j,r,l+1}\H_{i,s,k+1} - \H_{i,s,k+1}\X_{j,r,l+1} + \H_{i,s,k+1}\X_{j,r,l-1})\\
&+\frac{\h^2 d_i^2 c^2_{ij}}{8}(\X_{j,r,l-1}\H_{i,s,k+1}-\H_{i,s,k+1}\X_{j,r,l-1}+\X_{j,r,l+1}\H_{i,s,k+1}-\H_{i,s,k+1}\X_{j,r,l+1})
\\
&+O(\h^{r+s+3}).
\end{align*}
We rewrite the above formula as  
\begin{align*}
&[\X_{j,r+1,l-1},\H_{i,s+1,k}]+[\X_{j,r,l-1},\H_{i,s+2,k}]-[\X_{j,r+2,l-1},\H_{i,s,k}]-[\X_{j,r+1,l-1},\H_{i,s+1,k}]\\
=&-\frac{\h d_i c_{ij}}{2}(\X_{j,r+1,l}\H_{i,s,k+1} + \X_{j,r+1,l-1}\H_{i,s,k+1} + \H_{i,s,k+1}\X_{j,r+1,l} + \H_{i,s,k+1}\X_{j,r+1,l-1})\\
&+\frac{\h^2 d_i^2 c^2_{ij}}{8}(\X_{j,r,l-1}\H_{i,s,k+1}-\H_{i,s,k+1}\X_{j,r,l-1}+\X_{j,r,l+1}\H_{i,s,k+1}-\H_{i,s,k+1}\X_{j,r,l+1})\\
&+O(\h^{r+s+3}).
\end{align*}
Note that the first term and the last term on the left-hand side are canceled. By Corollary~ \ref{lemma3}, in $\bK_{r+s+2}/\bK_{r+s+3}$, the desired relation \eqref{deg2'} follows.
\end{proof}

\subsection{Relation \eqref{ty2}}
We verify the relation \eqref{ty2} in $\Gr_{\mathbf{K}}\tUi$  using \eqref{idef3'}, which is known by Proposition \ref{prop:equiv} to be equivalent to \eqref{idef3}.

\begin{proposition}
The following relation holds in $\Gr_{\mathbf{K}}\tUi$:
\begin{align}
&[\oX_{i,s,1},\oX_{j,r+1,1}]-[\oX_{i,s+1,1},\oX_{j,r,1}]+\frac{\h d_i c_{ij}}{2}(\oX_{j,r,1}\oX_{i,s,1}+\oX_{i,s}\oX_{j,r,1})
= 2\delta_{ij}(-1)^{s} \oH_{i,r+s+1,1} .\label{deg3'}
\end{align}
\end{proposition}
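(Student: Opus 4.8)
The plan is to deduce the relation \eqref{deg3'} from the affine $\imath$quantum group relation \eqref{idef3'}, following the same summation-and-degeneration pattern already used for relations \eqref{ty0}, \eqref{ty5}, and \eqref{ty1}. First I would fix indices and take an appropriate double summation over $k,l$ of \eqref{idef3'} to convert the generating elements $B_{i,k},\Theta_{i,m}$ into the divided-difference combinations $\X_{i,r,k},\H_{i,r,l}$ of \eqref{def:HX}. The point is that the left-hand side of \eqref{idef3'} is $[B_{i,k},B_{j,m+k+2}]_{v_i^{-c_{ij}}}-v_i^{-c_{ij}}[B_{i,k+1},B_{j,m+k+1}]_{v_i^{c_{ij}}}$, and after summing, the $v_i$-commutators $[\,\cdot\,,\cdot\,]_{v_i^{\pm c_{ij}}}$ must be expanded as ordinary commutators plus correction terms of the shape $(v_i^{\pm c_{ij}}-1)$ times an anticommutator. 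Since $v_i^{c_{ij}}=v^{d_ic_{ij}}$ and $\h = \ov{v-v^{-1}}$, I would expand $v_i^{-c_{ij}}=1-d_ic_{ij}(v-v^{-1})/1 + \cdots$ in powers of $\h$, keeping track of the leading and subleading terms; this is exactly what produces the coefficient $\tfrac{\h d_i c_{ij}}{2}$ in front of the anticommutator $\{\X_{j,r},\X_{i,s}\}$ and (after the two $v_i$-commutators are combined) could in principle contribute an $O(\h^2)$ term.

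Next I would handle the right-hand side. For $i\neq j$ the right-hand side of \eqref{idef3'} vanishes, and the Serre-type relation \eqref{ty3} (for $c_{ij}=0$) or the degeneration will account for it; the substantive case is $i=j$, where $\delta_{ij}v_i^{-2}(\Theta_{i,m+2}-\Theta_{i,m})$ appears. Here I would use $\H_{i,m+1,l}=\Theta_{i,m+2}-\Theta_{i,m}$ together with Corollary~\ref{lemma3}(b): since only the odd-index $\ov{\H}$ survive in the associated graded while even-index ones vanish, the factor $(-1)^s$ and the restriction $s\in 2\N+1$ emerge naturally, and $v_i^{-2}=1+O(\h)$ contributes only to lower-degree terms. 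Throughout I would work modulo $\bK_{r+s+2}$, writing $a=b+O(\h^{r+s+2})$ in the notation introduced for relation \eqref{ty5}, so that all the $(v_i^{\pm c_{ij}}-1)$-corrections beyond the recorded ones are absorbed into the error term and the identity is read off in $\bK_{r+s+1}/\bK_{r+s+2}$.

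The main obstacle I anticipate is the bookkeeping of the two distinct $v$-deformed commutators on the left-hand side of \eqref{idef3'}: unlike \eqref{ty1}, where a single relation \eqref{idef2'} was summed, here the two brackets carry opposite twists $v_i^{\mp c_{ij}}$ and an overall prefactor $v_i^{-c_{ij}}$, so their $\h$-expansions must be aligned carefully to see the antisymmetric combination $[\X_{i,s},\X_{j,r+1}]-[\X_{i,s+1},\X_{j,r}]$ in the commutator part while correctly collecting the symmetric anticommutator part at order $\h$. I would organize this by first rewriting \eqref{idef3'} purely in ordinary commutators, i.e. $[B_{i,k},B_{j,m+k+2}]-[B_{i,k+1},B_{j,m+k+1}]$ plus $\h$-correction anticommutators, and only then pass to the divided differences via Lemma~\ref{lemma1}; this keeps the combinatorics parallel to the computation already carried out for \eqref{deg2'} and makes the final application of Corollary~\ref{lemma3} routine.
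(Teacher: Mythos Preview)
Your overall strategy---sum \eqref{idef3'} (or \eqref{idef3}) to produce the divided-difference generators, expand the $v_i^{\pm c_{ij}}$-twisted commutators to first order in $\h$, and pass to $\bK_{r+s+1}/\bK_{r+s+2}$---is the same as the paper's. The paper, however, does not perform a double summation: it sums once over $m$ to obtain $[\X_{i,0,k},\X_{j,r,k+3}]_{v_i^{-c_{ij}}}-v_i^{-c_{ij}}[\X_{i,0,k+1},\X_{j,r,k+2}]_{v_i^{c_{ij}}}=\delta_{ij}v_i^{-2}(\H_{i,r,3}-\H_{i,r,1})$, and then runs an induction on $s$ by shifting $k\mapsto k-1$ and subtracting. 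Each such step flips the sign on the right-hand side, which is precisely where the factor $(-1)^s$ comes from. A direct double summation over $k$ and $l$ in \eqref{idef3} also works and yields the same sign, since $\sum_a(-1)^{s-a}\binom{s}{a}\Theta_{i,p-a}=(-1)^s\H_{i,s,p-s}$; either route is fine.

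Two points in your write-up need correction. First, the identity you quote, $\H_{i,m+1,l}=\Theta_{i,m+2}-\Theta_{i,m}$, is false; from \eqref{def:HX} one has $\H_{i,1,l}=\Theta_{i,l+1}-\Theta_{i,l}$, so $\Theta_{i,m+2}-\Theta_{i,m}=\H_{i,1,m+1}+\H_{i,1,m}$, and after the full summation the right-hand side becomes $\H_{i,r+s+1,2}+\H_{i,r+s+1,1}$, giving the coefficient $2$ in \eqref{deg3'} via Corollary~\ref{lemma3}. Second, your explanation of the sign $(-1)^s$ is incorrect: it does \emph{not} arise from Corollary~\ref{lemma3}(b), and $s$ is not restricted to $2\N+1$ in this relation (you are conflating the free index $s\in\N$ here with the odd index on the Cartan generators $\H_{i,s}$). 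The sign is produced by the summation itself (or equivalently by the inductive shift in $k$), because the $\Theta$-index in \eqref{idef3'} moves in the \emph{opposite} direction to $k$. Corollary~\ref{lemma3}(b) only enters at the very end to explain why both sides vanish when $r+s$ is odd.
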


\begin{proof}
Taking a summation with respect to $m$ in \eqref{idef3'}, we have
\begin{equation}\label{degen1}
[\X_{i,0,k},\X_{j,r,k+3}]_{v_i^{-c_{ij}}}-v_i^{-c_{ij}}[\X_{i,0,k+1},\X_{j,r,k+2}]_{v_i^{c_{ij}}}=\delta_{ij}v_i^{-2}(\H_{i,r,3} -\H_{i,r,1}).
\end{equation}
Replacing $k$ by $k-1$ in \eqref{degen1}, we have
\begin{equation}\label{degen2}
[\X_{i,0,k-1},\X_{j,r,k+2}]_{v_i^{-c_{ij}}}-v_i^{-c_{ij}}[\X_{i,0,k},\X_{j,r,k+1}]_{v_i^{c_{ij}}}=\delta_{ij}v_i^{-2}(\H_{i,r,3} -\H_{i,r,1}).
\end{equation}
Taking the difference between \eqref{degen2} and \eqref{degen1} and then using Lemma \ref{lemma1}, we have
\begin{equation}\label{degen3}
\begin{aligned}
&\quad [\X_{i,1,k-1},\X_{j,r,k+2}]_{v_i^{-c_{ij}}}-v_i^{-c_{ij}}[\X_{i,1,k},\X_{j,r,k+1}]_{v_i^{c_{ij}}}
\\&+[\X_{i,0,k-1},\X_{j,r+1,k+2}]_{v_i^{-c_{ij}}}-v_i^{-c_{ij}}[\X_{i,0,k},\X_{j,r+1,k+1}]_{v_i^{c_{ij}}}=O(\h^{r+3}),
\end{aligned}
\end{equation}
where
\begin{align*}
O(\h^{r+3})=[\X_{i,1,k-1},\X_{j,r+1,k+2}]_{v_i^{-c_{ij}}}-v_i^{-c_{ij}}[\X_{i,1,k},\X_{j,r+1,k+1}]_{v_i^{c_{ij}}}.
\end{align*}
Note that last two terms on the LHS of \eqref{degen3} is the same as the LHS of \eqref{degen1}, and hence we can replace it by RHS of \eqref{degen1}. Then we obtain
\begin{equation}
\begin{aligned}
&[\X_{i,1,k-1},\X_{j,r,k+2}]_{v_i^{-c_{ij}}}-v_i^{-c_{ij}}[\X_{i,1,k},\X_{j,r,k+1}]_{v_i^{c_{ij}}}
= \delta_{ij}v_i^{-2}(-\H_{i,r+1,3} +\H_{i,r+1,1}) +O(\h^{r+3}).
\end{aligned}
\end{equation}
Inductively, we obtain the following identity for $s\geq 0$
\begin{equation}
\begin{aligned}
&[\X_{i,s,k-s},\X_{j,r,k+3-s}]_{v_i^{-c_{ij}}}-v_i^{-c_{ij}}[\X_{i,s,k+1-s},\X_{j,r,k+2-s}]_{v_i^{c_{ij}}}\\
&= \delta_{ij}v_i^{-2} (-1)^{s} (\H_{i,r+s,3}- \H_{i,r+s,1})+O(\h^{r+s+2}).
\end{aligned}
\end{equation}
We rewrite the above identity as follows : 
\begin{align}\notag
&[\X_{i,s,k-s},\X_{j,r+1,k+2-s}]-[\X_{i,s+1,k-s},\X_{j,r,k+2-s}]
\\
&\quad +\frac{\h d_i c_{ij}}{2}(\X_{j,r,k+3-s}\X_{i,s,k-s}+\X_{i,s,k+1-s}\X_{j,r,k+2-s})\\\notag
&= \delta_{ij}(-1)^{s}  (\H_{i,r+s+1,2}+ \H_{i,r+s+1,1})+ O(\h^{r+s+2}).
\end{align}
By Corollary~\ref{lemma3}, in $\bK_{r+s+1}/\bK_{r+s+2}$, the desired relation follows.
\end{proof}


\subsection{Serre relation \eqref{ty4}}\label{Serre}

Let $i,j\in \I^0$ such that $c_{ij}=-1$. We derive the Serre relation \eqref{ty4} from \eqref{idef4}.
We shall use the following shorthand notation:
\begin{align}
S(x,y,z)=\big[x, [y,z]\big] +\big[y, [x,z]\big],
\end{align}
for $x,y,z\in \Gr_{\mathbf{K}}\tUi$.
 \begin{proposition}
The following relation holds in $\Gr_{\mathbf{K}}\tUi$:
 \begin{align}\notag
&\quad S(\oX_{i,m,1},\oX_{i,s,1},\oX_{j,r,1})
\\ \notag
&=2 \delta_{s+m,ev} (-1)^{m+1} \left( \frac{\h d_i}{2}\right)^{s+m} \oX_{j,r,1}
+(-1)^{m+1} \sum_{t=0}^{\left\lfloor \frac{s+m-1}{2} \right\rfloor} \left( \frac{\h d_i}{2}\right)^{2t} [\oX_{j,r+1,1}, \oH_{i,s+m-2t-1,1}]
\\ 
&\quad +2(-1)^{m+1} \sum_{t=0}^{\left\lfloor \frac{s+m-1}{2} \right\rfloor} \left( \frac{\h d_i}{2}\right)^{2t+1}   ( \oH_{i,s+m-2t-1,1} \oX_{j,r,1}+\oX_{j,r,1} \oH_{i,s+m-2t-1,1}). \label{eq:Serre7}
\end{align}
\end{proposition}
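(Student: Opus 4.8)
The plan is to derive \eqref{eq:Serre7} by feeding the $\imath$quantum group Serre relation \eqref{idef4} through the appropriate finite-difference operators and reading off the leading term of the filtration. Since \eqref{idef4} holds for all $k_1,k_2,l\in\Z$, I am free to form the $\Z$-linear combination
\[
\sum_{a,b,c}(-1)^{m+s+r-a-b-c}\binom{m}{a}\binom{s}{b}\binom{r}{c}\cdot\big(\text{relation \eqref{idef4} at }k_1=1+a,\ k_2=1+b,\ l=1+c\big),
\]
which by \eqref{def:HX} replaces $B_{i,k_1},B_{i,k_2},B_{j,l}$ on the left-hand side by $\X_{i,m,1},\X_{i,s,1},\X_{j,r,1}$. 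Because the left-hand side of \eqref{idef4} is symmetric in $k_1,k_2$, this combination turns it into $\widetilde G(m,s,r)+\widetilde G(s,m,r)$, where $\widetilde G(m,s,r)=\X_{i,m,1}\X_{i,s,1}\X_{j,r,1}-[2]_i\,\X_{i,m,1}\X_{j,r,1}\X_{i,s,1}+\X_{j,r,1}\X_{i,m,1}\X_{i,s,1}$.

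I would first dispose of the left-hand side. Writing $[2]_i=v_i+v_i^{-1}=2+\tfrac14\h^2d_i^2+O(\h^4)$, the sum $\widetilde G(m,s,r)+\widetilde G(s,m,r)$ equals $S(\X_{i,m,1},\X_{i,s,1},\X_{j,r,1})$ plus a correction equal to $([2]_i-2)$ times a product of three $\X$'s. Since $[2]_i-2$ is divisible by $(v-v^{-1})^2$ in $\A$ and the triple product lies in $\bK_{m+s+r}$ by Lemma~\ref{lemma2}, this correction lies in $\bK_{m+s+r+2}$ and vanishes in $\bK_{m+s+r}/\bK_{m+s+r+1}$. Hence the image of the left-hand side is exactly $S(\oX_{i,m,1},\oX_{i,s,1},\oX_{j,r,1})$, as required.

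The substantive part is the right-hand side. The $c$-summation (order $r$ in $l$) is immediate: it sends $B_{j,l}\mapsto\X_{j,r,1}$ and $B_{j,l-1}\mapsto\X_{j,r,0}$, and by Corollary~\ref{lemma3} we may replace $\ov{\X}_{j,r,0}$ by $\ov{\X}_{j,r,1}$. The $a,b$-summations act on the $\Theta$-factors, which depend on $k_1,k_2$ only through $N=k_2-k_1$; since $\H_{i,n,l}=\sum_j(-1)^{n-j}\binom{n}{j}\Theta_{i,j+l}$ is precisely a finite difference of the $\Theta$'s, differencing in $k_1,k_2$ produces $\H$'s, with the sign of the backward difference in $k_1$ combining with the overall minus sign to give the prefactor $(-1)^{m+1}$. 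The coefficient families are then expanded using the telescoping identity $\sum_{p=0}^{P}(v_i^{2p+1}+v_i^{-2p-1})=[2P+2]_i$ together with a similar telescoping for the even coefficients, and $\ov{v_i-v_i^{-1}}=d_i\h$: the resulting quantum integers and the $v_i^{-2}$-brackets $[\,\cdot\,,\cdot\,]_{v_i^{-2}}=[\,\cdot\,,\cdot\,]+v_i^{-1}(v_i-v_i^{-1})(\,\cdot\,\cdot\,)$ split into a commutator part, fed by the even powers $(\h d_i/2)^{2t}$ producing the terms $[\X_{j,r+1,1},\H_{i,s+m-2t-1,1}]$, and an anticommutator part, fed by the odd powers $(\h d_i/2)^{2t+1}$ producing the terms $\{\H_{i,s+m-2t-1,1},\X_{j,r,1}\}$.

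The main obstacle is exactly this last bookkeeping, and it has two delicate features. First, the upper limit $\lfloor(k_2-k_1-1)/2\rfloor$ of the $p$-sum is coupled to the $\Theta$-index through $N=k_2-k_1$, so the two $k$-differences interact nontrivially with both the summation range and the coefficients; one must track the $\h$-expansion carefully enough to pin down the exact ranges and coefficients of \eqref{eq:Serre7}. Second, the constant term $\Theta_{i,0}=(v_i-v_i^{-1})^{-1}$ scales like $\h^{-1}$, so the terms in which the effective $\Theta$-index reaches $0$ contribute the pure term $2\,\delta_{s+m,ev}(-1)^{m+1}(\h d_i/2)^{s+m}\X_{j,r,1}$ rather than an $\H$-term. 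Two consistency checks guide the computation: by Corollary~\ref{lemma3} every $\ov{\H}_{i,n,1}$ with $n$ even vanishes, which forces the surviving terms to have $s+m-2t-1$ odd and accounts for the apparent asymmetry of the prefactor, since when $s+m$ is even one has $(-1)^{m+1}=(-1)^{s+1}$, restoring the $m\leftrightarrow s$ symmetry demanded by the left-hand side, while when $s+m$ is odd both sides vanish. Finally, to reduce the amount of direct computation one may instead verify small base cases in $k_1,k_2$ and propagate in $r$ using the already established relation \eqref{deg5''}, namely $[\oH_{i,1,1},\oX_{j,r,1}]=2c_{ij}\oX_{j,r+1,1}$, exactly as in the classical derivation in Proposition~\ref{prop:reduced relation}.
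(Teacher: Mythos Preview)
Your setup for the left-hand side is fine: the triple finite-difference correctly produces $\widetilde G(m,s,r)+\widetilde G(s,m,r)$, and your argument that $[2]_i-2\in(v-1)^2\A$ kills the correction is correct. The substantive divergence from the paper is on the right-hand side, and there your proposal is a sketch rather than a proof.

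The paper's argument avoids the coupled bookkeeping you describe by means of two simplifying moves that you do not make. First, since the right-hand side of \eqref{idef4} depends only on $k_2-k_1$, so does $S(\X_{i,m,k_1},\X_{i,s,k_2},\X_{j,r,l})$; from this one derives the recursion
\[
S(\X_{i,m,k_1},\X_{i,s,k_2},\X_{j,r,l})+S(\X_{i,m-1,k_1},\X_{i,s+1,k_2},\X_{j,r,l})=O(\h^{m+s+r+1}),
\]
which in the graded algebra gives $S(\oX_{i,m},\oX_{i,s},\oX_{j,r})=(-1)^m S(\oX_{i,0},\oX_{i,m+s},\oX_{j,r})$. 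This both explains the vanishing for $m+s$ odd and reduces everything to the case $m=0$. Second, for $m=0$ the paper fixes $k_1$, takes $k_2-k_1$ large and \emph{even}, and forms step-$2$ differences $\eqref{idef4}_{k_1,k_2+2}-\eqref{idef4}_{k_1,k_2}$; this matches the step-$2$ structure of the coefficients $v_i^{2p+1}+v_i^{-2p-1}$ and shifts the $p$-sum cleanly, so that an easy induction on $s$ yields the closed formula $S\big(\X_{i,0,k_1},\sum_t\binom{s}{t}\X_{i,s,k_2+t},\X_{j,r,l}\big)$ and then \eqref{eq:Serre7}.

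By contrast, your direct step-$1$ differences in $k_1$ and $k_2$ run straight into the obstacle you yourself flag: the symmetrized $p$-sum with upper limit $\lfloor (k_2-k_1-1)/2\rfloor$ changes shape with each step, and for $k_1=1+a,k_2=1+b$ the sign of $b-a$ is not controlled, so the $\mathrm{Sym}_{k_1,k_2}$ on the right mixes the two halves of the formula. You name these issues but do not resolve them; the telescoping identity $\sum_{p=0}^{P}(v_i^{2p+1}+v_i^{-2p-1})=[2P+2]_i$ is relevant but does not by itself untangle the coupled differences. Your final fallback, propagating in $r$ via \eqref{deg5''}, does not help either: the $r$-dependence is already handled by the $c$-summation; the difficulty lies entirely in $m,s$. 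The missing idea is precisely the shift recursion above, which decouples the problem and reduces to a single family of step-$2$ differences.
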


\begin{proof}
On one hand, the RHS of \eqref{idef4} only depends on $k_2-k_1$, and it implies that the formula of $S(\X_{i,m,k_1},\X_{i,s,k_2},\X_{j,r,l})$ only depends on $k_2-k_1$. Then we have
\begin{align}\notag
&S(\X_{i,m,k_1},  \X_{i,s+1 ,k_2},\X_{j,r,l})+S(\X_{i,m,k_1},  \X_{i,s,k_2},\X_{j,r,l})+S(\X_{i,m-1,k_1},  \X_{i,s +1,k_2},\X_{j,r,l})
\\\label{eq:Serre3}
=&S(\X_{i,m-1,k_1+1},  \X_{i,s ,k_2+1},\X_{j,r,l})-S(\X_{i,m-1,k_1},  \X_{i,s,k_2},\X_{j,r,l})=0.
\end{align}
Since $S(\X_{i,m,k_1},  \X_{i,s+1,k_2},\X_{j,r,l})\in O(\h^{m+s+r+1})$, we rewrite \eqref{eq:Serre3} as
\begin{align}
S(\X_{i,m,k_1},  \X_{i,s ,k_2},\X_{j,r,l})+S(\X_{i,m-1,k_1},  \X_{i,s +1,k_2},\X_{j,r,l})= O(\h^{m+s+r+1}).
\label{eq:Serre4}
\end{align}
It follows that $S(\X_{i,m },  \X_{i,s },\X_{j,r })=0$ in $\Gr_\mathbf{K} \widetilde{\mathbf{U}}^\imath$ if $m+s$ is odd.

On the other hand, taking a summation with respect to $l$ for \eqref{idef4}, we obtain 
\begin{align}
   \label{eq:Serre1}
\begin{split}
&\quad S(\X_{i,0,k_1},\X_{i,0,k_2},\X_{j,r,l})\\
&=-\sum_{p=0}^{\bigl\lfloor\frac{k_2-k_1-1}{2}\bigr\rfloor } (v_i^{2p+1}+v_i^{-2p-1}) [\Theta_{i,k_2-k_1-2p-1},\X_{j,r,l-1}]_{v_i^{-2}} \\
&\quad-\sum_{p=1}^{\bigl\lfloor\frac{k_2-k_1 }{2}\bigr\rfloor-1} (v_i^{2p}+v_i^{-2p})[\X_{j,r,l},\Theta_{i,k_2-k_1-2p}]_{v_i^{-2}}
\\
&\quad-(v_i^{k_2-k_1-1}+v_i^{k_1-k_2-1}) \X_{j,r,l}  -[\X_{j,r,l},\Theta_{i,k_2-k_1}]_{v_i^{-2}}
\end{split}
\end{align}
Suppose that $k_2-k_1\gg 0$ and it is even. We calculate the difference $\eqref{eq:Serre1}_{k_1,k_2+2}-\eqref{eq:Serre1}_{k_1,k_2}$ as follows:
\begin{align*}
& S(\X_{i,0,k_1}, \X_{i,1,k_2}+\X_{i,1,k_2+1},\X_{j,r,l})
\\
&=- (v_i+v_i^{-1}) [\Theta_{i,k_2-k_1+1},\X_{j,r,l-1}]_{v_i^{-2}} \\
&\quad-(v_i^{2}+v_i^{-2})[\X_{j,r,l},\Theta_{i,k_2-k_1}]_{v_i^{-2}} -[\X_{j,r,l},\H_{i,1,k_2-k_1}+\H_{i,1,k_2-k_1+1}]_{v_i^{-2}}\\
&\quad -\sum_{p=0}^{\bigl\lfloor\frac{k_2-k_1-1}{2}\bigr\rfloor } (v_i^2-1)(v_i^{2p+1}-v_i^{-2p-3}) [\Theta_{i,k_2-k_1-2p-1},\X_{j,r,l-1}]_{v_i^{-2}} \\
&\quad-\sum_{p=1}^{\bigl\lfloor\frac{k_2-k_1 }{2}\bigr\rfloor-1} (v_i^2-1)(v_i^{2p}-v_i^{-2p-2})[\X_{j,r,l},\Theta_{i,k_2-k_1-2p}]_{v^{-2}}\\
&\quad - (v_i^2-1) (v_i^{k_2-k_1-1}-v_i^{k_1-k_2-3}) \X_{j,r,l}
\\
&= -(v_i^{2}+v_i^{-2})[\X_{j,r+1,l-1},\H_{i,0,k_2-k_1}]_{v_i^{-2}}
\\
&\quad-4(v_i-1)(\H_{i,0,k_2-k_1} \X_{j,r,l-1}+\X_{j,r,l-1} \H_{i,0,k_2-k_1})
\\
&\quad +O(\h^{r+2}).
\end{align*}
By a similar computation, we have
\begin{align*}
 S(\X_{i,0,k_1}, \X_{i,2,k_2}+2\X_{i,2,k_2+1}+\X_{i,2,k_2+2},\X_{j,r,l})
&=-4[\X_{j,r+1,l-1},\H_{i,1,k_2-k_1}]_{v_i^{-2}}
\\
&\quad-4\h d_i(\H_{i,1,k_2-k_1} \X_{j,r,l-1}+\X_{j,r,l-1} \H_{i,1,k_2-k_1})
\\
&\quad - 2\h^2 \X_{j,r,l} +O(\h^{r+3}).
\end{align*}
Inductively, we have
\begin{align}\label{eq:Serre2}
\begin{split}
&\quad S\Big(\X_{i,0,k_1}, \sum_{t=0}^s \binom{s}{t}\X_{i,s,k_2+t},\X_{j,r,l}\Big)
\\
&=- \sum_{t=0}^{\left\lfloor \frac{s-1}{2} \right\rfloor} 2^{s-2t} (\h d_i)^{2t} [\X_{j,r+1,l-1}, \H_{i,s-2t-1,k_2-k_1}] 
\\
&\quad- \sum_{t=0}^{\left\lfloor \frac{s-1}{2} \right\rfloor} 2^{s-2t} (\h d_i)^{2t+1}  ( \H_{i,s-2t-1,k_2-k_1} \X_{j,r,l-1}+\X_{j,r,l-1} \H_{i,s-2t-1,k_2-k_1})
\\
&\quad -2 \delta_{s,ev} (\h d_i)^{s} \X_{j,r,l}+O(\h^{r+s+1}).
\end{split}
\end{align}
Using \eqref{eq:Serre2} and \eqref{eq:Serre4}, we obtain the desired relation \eqref{eq:Serre7}.
\end{proof}

We remark that the RHS of \eqref{eq:Serre7} is nonzero if and only if $s+m$ is even.

\section{Degeneration via generating functions}
\label{sec:gen}

In this section, we prove that the Serre relation \eqref{ty6} for $c_{ij}=-2$ in $\Y$ holds for the corresponding generators in $\Gr_{\mathbf{K}}\tUi$ in generating function form via degeneration. We find it illuminating to first give a quick new derivation of the counterparts of the relations \eqref{ty0}--\eqref{ty4} in generating function form for $\Y$ hold in $\Gr_{\mathbf{K}}\tUi$.  

\subsection{New notations}
Recall the definition~\eqref{eq:Genfun}. Define
\begin{align}
\label{ThetaBz}
\bTH_{i}^+(z)= \sum_{k>0} \TH_{i,k} z^k,\qquad \bB_i^+(z)=\sum_{k> 0} B_{i,k} z^k.
\end{align}
Recall that 
\[
\frac{u^k}{(1+u)^k}=\sum_{s\geq 0} (-1)^s \binom{s+k-1}{s}u^{s+k}. 
\]
Introduce a new variable $u$ such that $z=\frac{u}{1+u}.$ By the definition \eqref{def:HX}, the generating functions in \eqref{ThetaBz} can be rewritten in terms of $u$ as
\begin{align}
 \bTH^+_{i}\big(\frac{u}{1+u}\big)=\sum_{k\geq 0} \H_{i,k,1}u^{k+1},
 \qquad  
 \bB_{i}^+\big(\frac{u}{1+u}\big)=\sum_{k\geq 0} \X_{i,k,1}u^{k+1}.
\end{align}
We define generating functions $\bh_i(u),\bb_i(u)$ such that \[
\bh_i(u^{-1})=\bTH^+_{i}\big(\frac{u}{1+u}\big), 
\qquad
\bb_i(u^{-1})=\bB_{i}^+\big(\frac{u}{1+u}\big). 
\]
We will also use notations $\bh_i(u),\bb_i(u)$ for their images in $\Gr_{\mathbf{K}}\tUi$.

\subsection{Relations \eqref{ty0}--\eqref{ty2}} 
\begin{proposition}\label{prop:Gen}
In $\Gr_{\mathbf{K}}\tUi$, we have
\begin{align}
&[\bh_i(u_1),\bh_j(u_2)]=0,
\\\label{iGen2}
&(u_1-u_2)[\bb_i(u_1),\bb_i(u_2)]=-\h d_i(\bb_i(u_1)-\bb_i(u_2))^2 
- \frac{u_1-u_2}{ u_1+u_2} \big(\bh_i(u_1)-\bh_i(u_2)\big),
\\\notag
&(u_1-u_2)[\bb_i(u_1),\bb_j(u_2)]=\frac{d_i c_{ij}\h}{2}\big\{\bb_i(u_1),\bb_j(u_2)\big\}+ [b_{i,0},\bb_j(u_2)]-[\bb_i(u_1),b_{j,0}]
\\\label{iGen2'}
&\qquad\qquad\qquad\qquad \qquad\quad- \delta_{ij}\frac{u_1-u_2}{ u_1+u_2} \big(\bh_i(u_1)-\bh_i(u_2)\big)-\delta_{ij}\big(\bh_i(u_1)+\bh_i(u_2)\big),
\\\label{iGen3}
&(u_1^2-u_2^2)[\bh_i(u_1),\bb_j(u_2)]=d_i c_{ij}\h u_2\{\bh_i(u_1),\bb_j(u_2)\}+\frac{\h^2}{4}d_i^2 c_{ij}^2[\bh_i(u_1),\bb_j(u_2)]
\\\notag&\quad +2 c_{ij} u_2 \bb_j(u_2)-[\bh_i(u_1),\ov{b}_{j,1,1}]-d_i c_{ij}\h\big\{\bh_i(u_1),\ov{b}_{j,0,1}\big\}-2 c_{ij} \ov{b}_{j,0,1}-u_2[\bh_i(u_1),\ov{b}_{j,0,1}].
\end{align}
\end{proposition}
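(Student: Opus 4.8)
The plan is to obtain each of the four identities by degenerating the corresponding generating-function relation of $\tUi$ and extracting the leading term of the $\bK$-filtration, so that these serve as a warm-up for the harder relation \eqref{ty6} treated later in the section. The commuting relation $[\bh_i(u_1),\bh_j(u_2)]=0$ is immediate: summing the component relation \eqref{ty0} already verified in Section~\ref{sec:verify} (equivalently, packaging \eqref{idef1}/\eqref{iDRG1}) gives $[\ov{\H}_{i,s,1},\ov{\H}_{j,r,1}]=0$, whence the claim. For the three remaining identities the key device is the substitution $z=\frac{u}{1+u}$ built into the definitions preceding \eqref{ThetaBz}, under which the positive-mode functions $\bTH_i^+(z)$ and $\bB_i^+(z)$ become precisely $\bh_i$ and $\bb_i$, so that the variable $u$ records the filtration degree. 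First I would set up the dictionary expressing the two-sided $\bTH_i(z),\B_i(z)$ in terms of $\bh_i,\bb_i$ together with their constant, zero- and negative-mode pieces; only certain mode combinations survive in $\Gr_{\bK}\tUi$, and the remainder will account for the boundary terms.

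Next I would degenerate the relations one at a time, matching \eqref{iDRG2} to \eqref{iGen3}, \eqref{iDRG3a} (for $i\neq j$) to \eqref{iGen2'}, and \eqref{iDRG3b} (equal index) to \eqref{iGen2}. In each case I clear denominators and expand the structure constants in the graded ring via $v_i^{\pm c_{ij}}=1\pm\frac{d_i c_{ij}}{2}\h+O(\h^2)$ and $v_i-v_i^{-1}=d_i\h+O(\h^3)$, where $\h=\ov{v-v^{-1}}$, then retain the leading filtration term. The order-zero part of the expansion yields the bare commutators; the order-$\h$ part yields the anticommutator terms $\frac{d_i c_{ij}\h}{2}\{\cdot,\cdot\}$ and the $d_i c_{ij}\h$ contributions; and the order-$\h^2$ part yields the $\frac{\h^2}{4}d_i^2 c_{ij}^2$ term in \eqref{iGen3}. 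The quadratic term $-\h d_i(\bb_i(u_1)-\bb_i(u_2))^2$ in \eqref{iGen2} is the generating-function image of the convolution $\sum_t \X_{i,\cdot}\X_{i,\cdot}$ that appears in the component form \eqref{deg3'}, while the delta factor $\bDel(zw)(z-w)$ on the right of \eqref{iDRG3b}, under the substitution and passage to the graded limit, is responsible for the kernel $\frac{u_1-u_2}{u_1+u_2}$ acting on $\bh_i(u_1)-\bh_i(u_2)$ in both \eqref{iGen2} and \eqref{iGen2'}; the alternating expansion of $\frac{1}{u_1+u_2}$ reproduces the signs $(-1)^s$ of \eqref{deg3'}. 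As an independent consistency check I would assemble the same identities directly from the component relations of Section~\ref{sec:verify}: reading off the coefficient of $u_1^{-s-1}u_2^{-r-1}$, relation \eqref{iGen3} is exactly $(u_1^2-u_2^2)[\bh_i(u_1),\bb_j(u_2)]$ governed by \eqref{deg2'} and \eqref{deg5''}, and \eqref{iGen2},\eqref{iGen2'} repackage \eqref{deg3'}; this route makes the origin of every term transparent.

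The main obstacle I anticipate is the bookkeeping of the non-positive modes under the substitution. Since $\bh_i,\bb_i$ retain only the positive modes in the $z$-variable whereas $\B_i(z),\bTH_i(z)$ are two-sided and $z\mapsto\frac{u}{1+u}$ is not symmetric under $z\mapsto z^{-1}$, restricting each $\tUi$-relation to the modes surviving in $\Gr_{\bK}\tUi$ is precisely what generates the correction terms: the $[b_{i,0},\bb_j(u_2)]-[\bb_i(u_1),b_{j,0}]$ in \eqref{iGen2'}, and the $2c_{ij}u_2\bb_j(u_2)$, $[\bh_i(u_1),\ov{b}_{j,1,1}]$, $\h d_i c_{ij}\{\bh_i(u_1),\ov{b}_{j,0,1}\}$, $\ov{b}_{j,0,1}$ and $u_2[\bh_i(u_1),\ov{b}_{j,0,1}]$ terms in \eqref{iGen3}. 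Pinning down the exact coefficients of these boundary contributions, together with the precise degeneration of $\bDel(zw)$ to the rational kernel $\frac{u_1-u_2}{u_1+u_2}$, is the delicate part of the argument; once the dictionary of the first step is in place, the bulk terms follow routinely, and cross-checking against the component computation of Section~\ref{sec:verify} confirms the boundary terms.
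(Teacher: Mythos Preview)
Your plan is essentially the paper's own approach: restrict the generating-function relations \eqref{iDRG1}--\eqref{iDRG3b} to positive modes, substitute $z=\frac{1}{1+u_1}$, $w=\frac{1}{1+u_2}$, expand $v_i^{\pm c_{ij}}$ in $\h$, and drop subleading terms. Two refinements the paper makes are worth flagging. First, one cannot substitute directly into $\bDel(zw)$; the paper first takes the $z^kw^l$ components with $k,l>0$, which replaces $\bDel(zw)(z-w)$ by the honest rational function $\frac{z-w}{1-zw}$ plus explicit boundary terms (see \eqref{eq:Gen2}), and only then substitutes---after which $\frac{z-w}{1-zw}=\frac{u_2-u_1}{u_1+u_2+u_1u_2}$ and the $u_1u_2$ is subleading, yielding the kernel $\frac{u_1-u_2}{u_1+u_2}$. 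Second, to obtain the specific quadratic form $-\h d_i(\bb_i(u_1)-\bb_i(u_2))^2$ in \eqref{iGen2} rather than the anticommutator-plus-boundary form of \eqref{iGen2'} at $i=j$, the paper specializes \eqref{eq:Gen2} at $z=w$ to extract $(v_i^2-1)\bB_i^+(z)^2-[\bB_i^+(z),B_{i,0}]_{v_i^2}=v_i^{-2}\bTH_i^+(z)$ and feeds this back to rewrite the boundary terms as $(v_i^2-1)(z\bB_i^+(w)^2+w\bB_i^+(z)^2)$ before substituting; your attribution of that quadratic term to a ``convolution in \eqref{deg3'}'' is off, since \eqref{deg3'} carries an anticommutator, and the convolution you have in mind is \eqref{bbalt}.
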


 
\begin{proof}
The first identity can be easily obtained from \eqref{iDRG1}. 

We prove \eqref{iGen2}. Taking the components for all $z^k w^l, k,l> 0$ in the defining relation \eqref{iDRG3b}, we have
\begin{align} 
\notag
&(v_i^2 z-w)  \bB_i^+(z) \bB_i^+(w) + (v_i^2 w-z)  \bB_i^+(w) \bB_i^+(z)-z[\bB_i^+(w),B_{i,0}]_{v_i^2}-w[\bB_i^+(z),B_{i,0}]_{v_i^2}
\\
&= v_i^{-2}\frac{z-w}{1-zw} \big( \bTH_{i}^+(w)-\bTH_{i}^+(z)\big)+v_i^{-2} w\bTH_{i}^+(w)+v_i^{-2} z\bTH_{i}^+(z).
\label{eq:Gen2}
\end{align}
Setting $z=w$ in \eqref{eq:Gen2}, we obtain $(v_i^2-1)\bB_i^+(z)^2-[\bB_i^+(z),B_{i,0}]_{v_i^2}=v_i^{-2}\bTH_{i}^+(z)$. Then we rewrite \eqref{eq:Gen2} as
\begin{align}\label{eq:Gen3}
\begin{split}
&(v_i^2 z-w)  \bB_i^+(z) \bB_i^+(w) + (v_i^2 w-z)  \bB_i^+(w) \bB_i^+(z) 
\\
&= v_i^{-2}zw\frac{z-w}{1-zw} \big( \bTH_{i}^+(w)-\bTH_{i}^+(z)\big)
+(v_i^2-1) \big(z\bB_i^+(w)^2+w\bB_i^+(z)^2\big).
\end{split}
\end{align}
Substitute $z=\frac{u_1^{-1}}{1+u_1^{-1}}=\frac{1}{1+u_1},w=\frac{1}{1+u_2}$, and then we obtain
\begin{align}\label{eq:Gen4}
\begin{split}
&(u_1-u_2)[\bb_i(u_1),\bb_i(u_2)]=-\frac{u_1-u_2}{\boxed{\blue{u_1 u_2}+} u_1+u_2}\Big(\bh_i(u_1)-\bh_i(u_2)\Big) 
\\
&-\h d_i \Big((1 \boxed{+\blue{u_2}})\big(\bb_i(u_1)^2-\bb_i(u_1)\bb_i(u_2)\big)+(1 \boxed{+\blue{u_1}})\big(\bb_i(u_2)^2-\bb_i(u_2)\bb_i(u_1)\big)\Big).
\end{split}
\end{align}
As the three boxed blue terms above lie in the higher degree components, we can simplify the above identity in the associated graded $\Gr_{\mathbf{K}}\tUi$ by removing the boxed blue terms, and obtain \eqref{iGen2}. The relation \eqref{iGen2'} can be proved similarly. 

We next prove \eqref{iGen3}. Taking the components for $z^k w^l, k,l> 0$ in \eqref{iDRG2}, we obtain the following identity
\begin{align}\notag
&(w -v_i^{-c_{ij}}z ) (1 -v_i^{c_{ij}} zw) \B^+_j(w)  \bTH_i^+(z)
-(w -v_i^{c_{ij}}z ) (1 -v_i^{-c_{ij}}zw) \bTH_i^+(z) \B_j^+(w)
\\\notag
&= w \big[\bTH_i^+(z), zw B_{j,0} +zB_{j,-1} -B_{j,0}-z^2  B_{j,0}\big]
\\\notag
&\quad +[c_{ij}]_i z(1-w^2) \B^+_j(w) -zw  [c_{ij}]_i \big(wB_{j,0}+B_{j,-1}\big)
\\\label{eq:Gen5}
&\quad - (v_i^{c_{ij}}-1)zw \big(w B_{j,0} +B_{j,-1}\big) \bTH_i^+(z)
+ (v_i^{-c_{ij}}-1) zw \bTH_i^+(z)\big(w B_{j,0} +B_{j,-1}\big) .
\end{align}
Substitute $z=\frac{1}{1+u_1},w=\frac{1}{1+u_2}$ in the above identity. Note that we have the following formulas
\begin{align*}
1-w^2 =2u_2 + o(\h), \qquad v_i^{c_{ij}}-1 &=\frac{d_i c_{ij}}{2}\h +o(\h),
\\
w-v_i^{\pm c_{ij}}z &= u_1-u_2 \mp \frac{d_i c_{ij}}{2}\h + o(\h),
\\
1-v_i^{\pm c_{ij}} zw &= u_1 +u_2 \mp \frac{d_ic_{ij}}{2} \h + o(\h),
\\
w B_{j,0} +B_{j,-1} &=2b_{j,0,1}+o(\h),
\\
zw B_{j,0} +zB_{j,-1} -B_{j,0}-z^2  B_{j,0} &= - u_2 B_{j,0} - b_{j,1,1}+o(\h),
\end{align*}
where $o(\h)$ represents those higher degree terms (with respect to $u_1,u_2,\h$).
Using the above formulas, we simplify \eqref{eq:Gen5} and obtain the desired relation \eqref{iGen3}.
\end{proof}

\begin{corollary}
Relations \eqref{ty0}--\eqref{ty2} hold in $\Gr_{\mathbf{K}}\tUi$.
\end{corollary}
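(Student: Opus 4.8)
The plan is to read off each of the relations \eqref{ty0}, \eqref{ty5}, \eqref{ty1} and \eqref{ty2} as the coefficient of a suitable monomial $u_1^a u_2^b$ in the corresponding generating-function identity of Proposition~\ref{prop:Gen}, using the expansions
\[
\bh_i(u)=\sum_{k\geq 0}\ov{\H}_{i,k,1}\, u^{-k-1},\qquad \bb_i(u)=\sum_{k\geq 0}\ov{\X}_{i,k,1}\, u^{-k-1},
\]
so that the component $\ov{\H}_{i,k,1}$ (resp.\ $\ov{\X}_{i,k,1}$) is recovered as the coefficient of $u^{-k-1}$, and the relations are understood after the substitution $\H_{i,s}\mapsto\ov{\H}_{i,s,1}$, $\X_{i,r}\mapsto\ov{\X}_{i,r,1}$. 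Relation \eqref{ty0} is then immediate, being the coefficient of $u_1^{-s-1}u_2^{-r-1}$ in $[\bh_i(u_1),\bh_j(u_2)]=0$.

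For the pair \eqref{ty5}, \eqref{ty1} I would extract coefficients from \eqref{iGen3}. The key point is that each of the boundary terms $2c_{ij}u_2\bb_j(u_2)$, $[\bh_i(u_1),\ov{b}_{j,1,1}]$, $\{\bh_i(u_1),\ov{b}_{j,0,1}\}$, $\ov{b}_{j,0,1}$ and $u_2[\bh_i(u_1),\ov{b}_{j,0,1}]$ is either constant in one of the variables or carries a nonnegative power of $u_2$. Consequently, on a mixed monomial $u_1^{-s-1}u_2^{-r-1}$ with $s,r\geq 0$ all of them vanish, and comparing the surviving bulk terms reproduces \eqref{ty1} after using the antisymmetry of the bracket and the symmetry of the anticommutator. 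Relation \eqref{ty5}, by contrast, I would obtain as the coefficient of $u_1^{0}u_2^{-r-1}$: now the bulk terms, which all carry a factor $u_1^{-s-1}$ with $s\geq 0$, drop out; the left-hand side $(u_1^2-u_2^2)[\bh_i(u_1),\bb_j(u_2)]$ contributes only $[\ov{\H}_{i,1,1},\ov{\X}_{j,r,1}]$ through the $u_1^2\bh_i(u_1)$ piece; and the sole surviving right-hand term $2c_{ij}u_2\bb_j(u_2)$ contributes $2c_{ij}\ov{\X}_{j,r+1,1}$, giving \eqref{ty5}.

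Relation \eqref{ty2} I would derive from \eqref{iGen2'} (the $i=j$ case being alternatively read off from \eqref{iGen2}) by extracting $u_1^{-s-1}u_2^{-r-1}$, $s,r\geq 0$; the single-variable corrections $[b_{i,0},\bb_j(u_2)]$ and $[\bb_i(u_1),b_{j,0}]$ again do not meet a mixed negative monomial and so vanish, while $\tfrac{d_ic_{ij}\h}{2}\{\bb_i(u_1),\bb_j(u_2)\}$ supplies $\tfrac{d_ic_{ij}\h}{2}\{\ov{\X}_{i,s,1},\ov{\X}_{j,r,1}\}$. The one step requiring care is the Cartan term $-\delta_{ij}\frac{u_1-u_2}{u_1+u_2}(\bh_i(u_1)-\bh_i(u_2))$: expanding
\[
\frac{u_1-u_2}{u_1+u_2}=1+2\sum_{n\geq 1}(-1)^n (u_2/u_1)^n
\]
in the region compatible with the expansion of $\bh_i$ in negative powers, the constant $1$ multiplies only the pure-$u_1$ and pure-$u_2$ parts of $\bh_i(u_1)-\bh_i(u_2)$ and hence misses every mixed monomial, whereas the tail contributes precisely $2(-1)^s\ov{\H}_{i,r+s+1,1}$ to the coefficient of $u_1^{-s-1}u_2^{-r-1}$; this yields the right-hand side $2\delta_{ij}(-1)^s\ov{\H}_{i,r+s+1,1}$ of \eqref{ty2}.

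I expect the only genuine difficulty to be bookkeeping: tracking which boundary and Cartan corrections contribute to each extracted coefficient, and confirming that the chosen formal expansion of $\frac{u_1-u_2}{u_1+u_2}$ reproduces the exact sign $(-1)^s$ and numerical factor $2$ appearing in \eqref{ty5} and \eqref{ty2}. No idea beyond these coefficient comparisons is needed.
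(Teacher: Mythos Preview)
Your proposal is correct and follows essentially the same route as the paper. The paper's proof simply observes that under $\Phi$ the generating functions $h_i(u),b_j(u)$ of $\Y$ map to $1+\h d_i\bh_i(u)$ and $\h d_j\bb_j(u)$, so the generating-function reformulations \eqref{gconj1}--\eqref{gconj4} of \eqref{ty0}--\eqref{ty2} coincide with the identities of Proposition~\ref{prop:Gen}; you carry out the same deduction by extracting coefficients explicitly, which is the unpacked version of the paper's one-line argument.
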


\begin{proof}
Note that the images of $h_i(u)$ and $b_j(u)$ under the map $\Phi:\Y \rightarrow \Gr_{\bK}\tUi$ are $1+\h d_i\bh_i(u)$ and $\h d_j\bb_j(u)$, respectively. Then clearly the formulas in Proposition~\ref{prop:Gen} imply that relations \eqref{ty0}--\eqref{ty2} hold in $\Gr_{\mathbf{K}}\tUi$.
\end{proof}
\subsection{Serre type relation for $c_{ij}=-1$}

Denote
\begin{align*}
\bS(w_1,w_2|z)^+=\Sym_{w_1,w_2}\Big\{&\B^+_{i}(w_1)\B^+_{i}(w_2)\B^+_{j}(z)-
\\
&-[2]_{i}\B^+_{i}(w_1)\B^+_{j}(z)\B^+_{i}(w_2)+\B^+_{j}(z)\B^+_{i}(w_1)\B^+_{i}(w_2)\Big\}.
\end{align*}
We rewrite \eqref{iDRG4} using the function $\Delta(w_1w_2)$ as follows
\begin{align}\notag
&\Sym_{w_1,w_2}\big\{\B_{i}(w_1)\B_{i}(w_2)\B_{j}(z)
-[2]_{v_i}\B_{i}(w_1)\B_{j}(z)\B_{i}(w_2)+\B_{j}(z)\B_{i}(w_1)\B_{i}(w_2)\big\}
\\\notag
&= \Sym_{w_1,w_2}\bDel(w_1w_2)
\frac{[2]_{i} z(w_2^2- 1)w_2}{(1-v_i^2 w_2^2)(1-v_i^{-2}w_2^2)}[\bTH_i(w_2),\B_{j}(z)]_{v_i^{-2}} 
\\\label{iDRG4'} 
&\quad +\Sym_{w_1,w_2} \bDel(w_1w_2)\frac{w_2^4-1}{(1-v_i^2 w_2^2)(1-v_i^{-2}w_2^2)}[\B_{j}(z),\bTH_i(w_2)]_{v_i^{-2}} .
\end{align}
Taking the components for positive powers of $w_1,w_2,z$ in \eqref{iDRG4'}, we obtain the following formula
\begin{align}
\bS(w_1,w_2|z)^+ \notag
=&\Sym_{w_1,w_2}  \frac{[2]_{i} z w_1 w_2}{1-w_1 w_2}
\frac{(w_2^2-1)w_2}{(1-v_i^2 w_2^2)(1-v_i^{-2}w_2^2)}[\bTH_i^+(w_2),\B^+_{j}(z)]_{v_i^{-2}} 
\\\notag
&+\Sym_{w_1,w_2} \frac{w_1 w_2}{1-w_1 w_2} \frac{ w_2^4-1}{(1-v_i^2 w_2^2)(1-v_i^{-2}w_2^2)}[\B^+_{j}(z),\bTH^+_i(w_2)]_{v_i^{-2}} 
\\\notag
&+v_i^{-1}\Sym_{w_1,w_2}  \frac{[2]_{i} z w_1 w_2}{1-w_1 w_2}
\frac{(w_2^2-1)w_2}{(1-v_i^2 w_2^2)(1-v_i^{-2}w_2^2)} \B^+_{j}(z)
\\\label{gen9}
&+v_i^{-1}\Sym_{w_1,w_2} \frac{w_1 w_2}{1-w_1 w_2} \frac{ w_2^4-1}{(1-v_i^2 w_2^2)(1-v_i^{-2}w_2^2)} \B^+_{j}(z)
\\\notag
&+\Sym_{w_1,w_2}  \frac{[2]_{i} z w_1 w_2}{1-w_1 w_2}
\frac{(w_2^2-1)w_2}{(1-v_i^2 w_2^2)(1-v_i^{-2}w_2^2)}\Big([\bTH_i^+(w_2),B_{j,0} ]_{v_i^{-2}}+v_i^{-1} B_{j,0}\Big).
\end{align}
Substituting $w_1=\frac{1}{1+u_1},w_2=\frac{1}{1+u_2}, z=\frac{1}{1+t}$, we obtain the following relation in $\Gr_{\mathbf{K}}\tUi$
\begin{align}\notag
&\Sym_{u_1,u_2}\Big[\bb_{i}(u_1),\big[\bb_{i}(u_2), \bb_{j}(t) \big]\Big]
\\\notag
&= \frac{1}{u_1 + u_2}\Sym_{u_1,u_2} \Big\{\frac{4u_2(t-\h d_i)}{4u_2^2-d_i^2\h^2} \bh_i(u_2) \bb_j(t)- \frac{4u_2(t+\h d_i)}{4u_2^2-d_i^2\h^2} \bb_j(t)\bh_i(u_2) \Big\}
\\\label{gen10}
&\quad -\frac{1}{u_1 + u_2}\Sym_{u_1,u_2} \frac{8 u_2 }{4u_2^2-d_i^2\h^2}   \bb_j(t)  .
\end{align}
To obtain the component-wise formula for \eqref{gen10}, one needs to rewrite $\frac{1}{u_1+u_2},\frac{4u_2 }{4u_2^2-d_i^2\h^2}$ as $\frac{u_1^{-1}}{1+u_1^{-1}u_2},
\frac{u_2^{-1} }{1-\left(\frac{\h d_i}{2u_2}\right)^2}$, respectively; then expand them as power series. Then it is clear that component-wise formula for the above relation is exactly \eqref{ty4}.

\begin{remark}
The Serre relation \eqref{iDRG4} can be simplified as follows:
\begin{align}\notag
&\Sym_{w_1,w_2}\Big[\B_{i}(w_1),\big[\B_{i}(w_2),\B_{j}(z)\big]_{v_i}\Big]_{v_i^{-1}} 
\\
&= (1-v_i^{-2})\bDel(w_1w_2)\Big(\frac{ z}{v_i w_2-z  }-\frac{  z}{v_i w_1- z  } \Big) \big(\bTH_i(w_2)-\bTH_i(w_1)\big) \B_j(z).
\end{align}
We rewrite the above relation using $\bDel(w_1w_2)$ as 
\begin{align}\notag
&\Sym_{w_1,w_2}\Big[\B_{i}(w_1),\big[\B_{i}(w_2),\B_{j}(z)\big]_{v_i}\Big]_{v_i^{-1}} 
\\
&= - (1-v_i^{-2})\Sym_{w_1,w_2}\bDel(w_1w_2) \Big(\frac{  z}{v_i w_1^{-1}- z  }-\frac{ z}{v_i w_1-z  } \Big) \bTH_i(w_1)  \B_j(z).
\end{align}
Taking components for positive powers of $w_1,w_2,z$ in the above identity, we obtain 
\begin{align}\notag
&\Sym_{w_1,w_2}\Big[\B_{i}^+(w_1),\big[\B^+_{i}(w_2),\B^+_{j}(z)\big]_{v_i}\Big]_{v_i^{-1}} 
\\\notag
& =-(1-v_i^{-2})\Sym_{w_1,w_2}\frac{w_1 w_2}{1-w_1 w_2} \Big(\frac{ w_1 z}{v_i-w_1 z  }-\frac{ 1}{v_iw_1z^{-1}- 1} \Big) \bTH_i^+(w_1)  \B_j^+(z)
\\
&\quad +\text{ higher degree terms}.
\end{align}
Substituting $w_1=\frac{1}{1+u_1},w_2=\frac{1}{1+u_2}, z=\frac{1}{1+t}$, we obtain the following relation in $\Gr_{\mathbf{K}}\tUi$
\begin{align} 
\Sym_{u_1,u_2}\Big[\bb_{i}(u_1),\big[\bb_{i}(u_2), \bb_{j}(t) \big]\Big]
\label{gen15}
= \h d_i \frac{1}{u_1 + u_2}\Sym_{u_1,u_2} \frac{2u_1}{(t+\frac{\h d_i}{2})^2-u_1^2} \bh_i(u_1) \bb_j(t).
\end{align}
Writing $\frac{1}{u_1 + u_2}$ and $\frac{2u_1}{(t+\frac{\h d_i}{2})^2-u_1^2}$ as $\frac{u_2^{-1}}{1+u_1u_2^{-1}}$ and $\frac{-2u_1^{-1}}{1-(t+\frac{\h d_i}{2})^2u_1^{-2} }$ respectively and then expanding into power series, we arrive at the following component-wise formula for \eqref{gen15}:
\begin{align}
\Sym_{k_1,k_2}\Big[b_{i,k_1} ,\big[b_{i,k_2}, b_{j,r} \big] \Big]
\label{gen16}
=2\h d_i (-1)^{k_2+1}  \sum_{s\geq 0}\sum_{p=0}^{2s} \binom{2s}{p} \Big(\frac{\h d_i}{2}\Big)^{2s-p} h_{k_1+k_2-2s-1} b_{j,r+p},
\end{align}
where $h_{i,-1}$ is regarded as $(\h d_i)^{-1}$. 
One can show that \eqref{gen16} implies  \eqref{ty4}.  
\end{remark}

\subsection{Serre type relation for $c_{ij}=-2$}
We introduce the following notation 
\begin{align}
\bS[x_1,x_2,x_3|y]=\Sym_{x_1,x_2,x_3} \Big[x_1, \big[x_2 ,[x_3,y]_{v_i^2}\big]\Big]_{v_i^{-2}},
\end{align}
for elements $x_1,x_2,x_3,y$ in $\Ui$. By definition, $\bS[x_1,x_2,x_3|y]$ is symmetric with respect to $x_1,x_2,x_3$.

Let 
\begin{align*}
g(w_1,w_2,w_3)=(w_1+w_2+w_3)(w_1^{-1}+w_2^{-1}+w_3^{-1})-[3]^2_{i}.
\end{align*}
We take the components for positive powers of $w_1,w_2,w_3,z$ in \eqref{gen4} and regard $w_1=\frac{1}{1+u_1},w_2=\frac{1}{1+u_2},,w_3=\frac{1}{1+u_3}, z=\frac{1}{1+t}$. Then we obtain a Serre type relation involving $\bb_i(u_1),\bb_i(u_2),\bb_i(u_3),\bb_j(t)$ by expanding these rational functions as power series. We introduce a degree by setting $\deg u_i=\deg t=\deg \h=1, \deg b_{i,k}= \deg b_{j,k}=k$ on $\Gr_{\bK} \Ui[[u_1,u_2,u_3,t]]$, and then each of generating series becomes homogeneous with total degree $-1$. Modulo the higher degree terms in the Serre type relation, we obtain the following relation
\begin{align}
\label{gen11}
& \bS\Big[\B_i^+(w_1),\B_i^+(w_2),\B_i^+(w_3)|\B_j^+(z)\Big]
\\\notag
&= [3]_{i}\Sym_{w_1,w_2,w_3} \frac{ w_3 z^{-1} }{1-w_2 w_3 } ( w_2^2-1)g(w_1,w_2,w_3)^{-1}
\\\notag
&\qquad\quad \times\bigg(v_i[2]_{i}\big[\B^+_i(w_1),[\B^+_j(z),\bTH^+_i(w_2)]_{v_i^{-4}}\big]-\big[[ \B^+_j(z),\B^+_i(w_1)]_{v_i^{-2}},\bTH^+_i(w_2)\big]\bigg)
\\\notag
&\quad +\Sym_{w_1,w_2,w_3} \frac{ (w_1+w_2+w_2^{-1}) w_3 }{1-w_2 w_3 }( w_2^2-1) 
g(w_1,w_2,w_3)^{-1}
\\\notag
&\qquad\quad \times\bigg(-v_i[2]_{i}\big[[\bTH^+_i(w_2),\B^+_j(z)]_{v_i^{-4}}, \B^+_i(w_1)\big]+\big[\bTH_i^+(w_2),[\B^+_i(w_1), \B^+_j(z)]_{v_i^{-2}}\big]\bigg)
\\\notag
&\quad +v_i^{-1}[2]_i^2 [3]_{i}\Sym_{w_1,w_2,w_3} \frac{ w_3 z^{-1} }{1-w_2 w_3 } ( w_2^2-1)g(w_1,w_2,w_3)^{-1}\big[\B^+_i(w_1), \B^+_j(z) \big] 
\\\notag
&\quad -v_i^{-1}[2]_{i}^2 \Sym_{w_1,w_2,w_3} \frac{ (w_1+w_2+w_2^{-1}) w_3 }{1-w_2 w_3 }( w_2^2-1) 
g(w_1,w_2,w_3)^{-1} \big[ \B^+_j(z) , \B^+_i(w_1)\big] 
\\\notag
&\quad +\text{ higher degree terms}.
\end{align} 

Substituting $w_r=\frac{1}{1+u_r} , z=\frac{1}{1+t}$ for $r=1,2,3$, we obtain the following relation in $\Gr_{\mathbf{K}}\tUi$:
\begin{align}\notag
&\Sym_{u_1,u_2,u_3}\Big[\bb_{i}(u_1),\big[\bb_{i}(u_2),[\bb_i(u_3), \bb_{j}(t)] \big]\Big]
\\\notag
&= \Sym_{u_1,u_2,u_3} \frac{-u_2}{(u_2+u_3)(3u_2^2-3\h^2+u_1^2)}\times
\\\notag
&\quad \times \Big\{(9t+9\h d_i + 3u_1)\bb_{i}(u_1)\bb_{j}(t) \bh_i(u_2)+(9t-9\h d_i + 3u_1)\bh_i(u_2)\bb_{j}(t) \bb_{i}(u_1)
\\\notag
&\qquad -(3t-3\h d_i +u_1) \bh_i(u_2)\bb_{i}(u_1)\bb_{j}(t)-(3t+3\h d_i+u_1)\bb_{j}(t)\bb_{i}(u_1) \bh_i(u_2)
\\\notag
&\qquad-(6t-12\h d_i +2u_1) \bb_{i}(u_1)\bh_i(u_2)\bb_{j}(t)-(6t+12\h d_i +2u_1)\bb_{j}(t) \bh_i(u_2)\bb_{i}(u_1)
\\\label{gen12}
&\qquad + 24 \big[\bb_i(u_1),\bb_j(t)\big]\Big\}.
\end{align}
We rewrite the above formula as follows:
\begin{align}\notag
&\Sym_{u_1,u_2,u_3}\Big[\bb_{i}(u_1),\big[\bb_{i}(u_2),[\bb_i(u_3), \bb_{j}(t)] \big]\Big]
\\\notag
&= \Sym_{u_1,u_2,u_3} \frac{-u_2}{(u_2+u_3)(3u_2^2-3\h^2+u_1^2)}\times
\\\notag
&\quad \times \Big\{ (3t+u_1) \big[[\bb_i(u_1),\bb_j(t)],\bh_i(u_2)\big]+2(3t+u_1) \big[\bb_i(u_1),[\bb_j(t),\bh_i(t)\big] 
\\\notag
&\qquad +3 \h d_i \big[\{\bb_i(u_1),\bh_i(u_2)\},\bb_j(t) \big]+9 \h d_i \big[\bb_i(u_1),\{\bh_i(u_2),\bb_j(t) \}\big]
\\\label{gen13}
&\qquad + 24 \big[\bb_i(u_1),\bb_j(t)\big]\Big\}.
\end{align}
The component-wise formula for $\eqref{gen13}$ is given by
\begin{align}\notag
& \Sym_{k_1,k_2,k_3}\Big[b_{i,k_1} ,\big[b_{i,k_2},[b_{i,k_3}, b_{j,r} ] \big]\Big]
\\\notag
 &= \text{Cyc}_{k_1,k_2,k_3}  \sum_{s\geq 0}\sum_{p=0}^s(-1)^{k_3+p+1}\binom{s}{p} 3^{-p-1}\h^{2s-2p}\times
\\\notag
&\quad \times\Big\{ 3\big[[b_{i,k_1+2p},b_{j,r+1}],h_{i,k_2+k_3-2s-1}\big]
+\big[[b_{i,k_1+2p+1},b_{j,r}],h_{i,k_2+k_3-2s-1}\big] +
\\\notag
&\qquad +6\big[b_{i,k_1+2p},[b_{j,r+1},h_{i,k_2+k_3-2s-1}]\big]
+2\big[b_{i,k_1+2p+1},[b_{j,r},h_{i,k_2+k_3-2s-1}]\big]+
\\\notag
&\qquad +3 \h d_i \big[\big\{b_{i,k_1+2p},h_{i,k_2+k_3-2s-1}\big\},b_{j,r} \big]
+9 d_i  \h \big[b_{i,k_1+2p},\{h_{i,k_2+k_3-2s-1},b_{j,r} \}\big]\Big\}
\\\notag
&\qquad + 8  \text{Cyc}_{k_1,k_2,k_3} \delta_{k_2+k_3,even}
\sum_{p= 0}^{\frac{k_2+k_3}{2}}(-1)^{k_3+p+1} \binom{\frac{k_2+k_3}{2}}{p}3^{-p}\h^{k_2+k_3-2p}\big[b_{i,k_1+2p},b_{j,r}\big],
\\\label{gen14} 
\end{align}
where $h_{i,l}=0$ if $l<0$. Here $\text{Cyc}_{k_1,k_2,k_3}$ denotes the sum over cyclic permutations on the indices $(k_1,k_2,k_3)$.


\section{Twisted Yangian of type $G_2$}
\label{sec:G2}

In this section, we formulate the twisted Yangian of type $G_2$ in a reduced presentation using finite type Serre relations (similar to Theorem~\ref{thm:Yreduced}). 

\subsection{Relations through degeneration}
\label{sec:G2rel}

Let $\Ui$ be the affine $\imath$quantum group of type $G_2$ in Drinfeld presentation as given in Proposition~\ref{prop:Dr}. The filtration on $\Ui$ and the associated graded $\Gr_{\mathbf{K}}\tUi$ in \eqref{GrK} still make sense. 
 
The formulation of the relations \eqref{ty0}--\eqref{ty2} (i.e., all the relations other than Serre relations) still makes sense for $c_{ij}=-3$. The verification in Section \ref{sec:verify} of the relations \eqref{ty0}--\eqref{ty2} for the corresponding generators in $\Gr_{\mathbf{K}}\tUi$ under the map $\Phi$ in \eqref{Phimap} remains valid for $c_{ij}=-3$. 

One can obtain via degeneration from the finite type Serre relation for $\tUi$ (cf. \cite[Section~ 5]{Z22}) the following finite type Serre relations in $\Gr_{\mathbf{K}}\tUi$ without difficulty: for $r\geq 0$, 
\begin{align*}
\big[\ov{b}_{i,0,1},[\ov{b}_{i,0,1},\ov{b}_{j,r,1}]\big]  &=- \ov{b}_{j,r,1} \qquad (c_{ij}=-1),
\\
\Big[\ov{b}_{i,0,1},\big[\ov{b}_{i,0,1},[\ov{b}_{i,0,1},\ov{b}_{j,r,1}]\big]\Big] &=-4 [\ov{b}_{i,0,1},\ov{b}_{j,r,1}] \qquad (c_{ij}=-2),
\\
\Big[\ov{b}_{i,0,1},\Big[\ov{b}_{i,0,1},\big[\ov{b}_{i,0,1},
[\ov{b}_{i,0,1},\ov{b}_{j,r,1}]\big]\Big]\Big] &=
-10 \big[\ov{b}_{i,0,1},[\ov{b}_{i,0,1},\ov{b}_{j,r,1}]\big] 
   -9\ov{b}_{j,r,1},
\qquad (c_{ij}=-3).
\end{align*}
Only the formula for $c_{ij}=-3$ above is new, and  the other formulas for $c_{ij}=-1, -2$ were already obtained in the previous sections. 

However, the degeneration approach gets overwhelmingly complicated toward obtaining the general form of current Serre relations in $\Gr_{\mathbf{K}}\tUi$ for $c_{ij}=-3$.

\subsection{PBW basis for ${}'\Y$}

Inspired by the reduced presentations in Theorem~ \ref{thm:Yreduced} for twisted Yangians of all split types except $G_2$ defined in Definition~\ref{def:YN}, we define the twisted Yangian of type $G_2$ in a reduced presentation as follows.


\begin{definition}
\label{def:G2}
Let $\I^0=\{1,2\}$ with $c_{21}=-3$ and $c_{12}=-1$. 
The twisted Yangian of type $G_2$ is the $\C[\h]$-algebra ${}'\Y$ generated by $\H_{i,s},\X_{j,r}$, for $s\in 2\N+1,r\in\N,i,j\in \mathbb{I}^0$, subject to the relations \eqref{ty0}--\eqref{ty2} and the following finite Serre type relations: 
\begin{align}
    \label{eq:tyG2-1}
    \big[b_{1,0}, [b_{1,0},b_{2,r}]\big] &=-b_{2,r},
    \\
    \label{eq:tyG2-2}
    \bigg[ b_{2,0},\Big[b_{2,0},\big[b_{2,0},
[b_{2,0},b_{1,r}]\big]\Big] \bigg]&=-10 \big[b_{2,0},[b_{2,0},b_{1,r}]\big]-9 b_{1,r}.
\end{align}
\end{definition}
By setting $\hbar=0$ in the defining relations for ${}'\Y$ we recover a presentation for $U\big(\g[u]^{\check\omega}\big)$. To justify Definition~ \ref{def:G2}, we shall show that ${}'\Y$ has the same size as $U\big(\g[u]^{\check\omega}\big)$ and hence is a flat deformation of the latter. 

Define a filtration on ${}'\Y$ by setting 
\begin{align}
 \label{breveDeg}
    \breve{\deg}\, b_{i,m}=m+1, 
    \qquad
    \breve{\deg}\, h_{i,r}=r+1, 
\end{align}
and denote by $\breve{{}'\Y}$ the associated graded algebra. Denote by $\breve{h}_{i,r},\breve{b}_{j,m}$ the images of $h_{i,r},b_{j,m}$ in $\breve{{}'\Y}$. By \eqref{ty0}--\eqref{ty5} and \eqref{bbalt}--\eqref{bhalt} (which are equivalent to \eqref{ty1}--\eqref{ty2}), we have the following identities in $\breve{{}'\Y}$
\begin{align}
  \label{tyG-1}
[\breve{h}_{i,r},\breve{h}_{j,s}]&=0,
\\\label{tyG-2}
[\breve{b}_{i,k+1},\breve{b}_{j,l}]&=[\breve{b}_{i,k},\breve{b}_{j,l+1}], 
\\\label{tyG-3}
[\breve{b}_{i,k},\breve{b}_{i,l}]&=0, 
\\\label{tyG-4}
[\breve{h}_{i,r},\breve{b}_{j,s}]&=0.
\end{align}

\begin{lemma}
The following identities hold in $\breve{{}'\Y}$: for $k_1,k_2,k_3,k_4,r\in \N$,
\begin{align}
\label{eq:tyG2-Serre'}
\big[\breve{b}_{1,k_1}, [\breve{b}_{1,k_2},\breve{b}_{2,r}]\big] &=0,
\\
\label{eq:tyG2-Serre}
\bigg[\breve{b}_{2,k_1},\Big[ \breve{b}_{2,k_2},\big[\breve{b}_{2,k_3},[\breve{b}_{2,k_4},\breve{b}_{1,r}]\big]\Big] \bigg]& =0.
\end{align}
\end{lemma}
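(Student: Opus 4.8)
The plan is to deduce both identities from the defining finite Serre relations \eqref{eq:tyG2-1} and \eqref{eq:tyG2-2} by passing to the associated graded $\breve{{}'\Y}$ for the filtration \eqref{breveDeg}, and then to propagate the resulting equal-index relations to arbitrary indices using the index shift \eqref{tyG-2} and the commutativity \eqref{tyG-3}. This mirrors the passage from finite Serre relations to \eqref{presen1} in the proof of Proposition~\ref{prop:PBWbasis}.

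First I would treat the all-zero-index base cases. Counting degrees with \eqref{breveDeg}: in \eqref{eq:tyG2-1} the left-hand side $\big[b_{1,0},[b_{1,0},b_{2,r}]\big]$ has $\breve{\deg}=1+1+(r+1)=r+3$, whereas the right-hand side $-b_{2,r}$ has $\breve{\deg}=r+1$; similarly, in \eqref{eq:tyG2-2} the left-hand side has $\breve{\deg}=4+(r+1)=r+5$ while each term on the right has $\breve{\deg}\le r+3$. Hence in each relation the left-hand side actually lies in a strictly lower filtration piece than its naive degree, so its leading symbol in that top degree vanishes. This yields $\big[\breve{b}_{1,0},[\breve{b}_{1,0},\breve{b}_{2,r}]\big]=0$ and $\big[\breve{b}_{2,0},[\breve{b}_{2,0},[\breve{b}_{2,0},[\breve{b}_{2,0},\breve{b}_{1,r}]]]\big]=0$ in $\breve{{}'\Y}$ for all $r\ge 0$. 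The only point to check is the elementary fact that, in a filtered algebra, the bracket of leading symbols is the symbol of the bracket whenever the latter has the expected degree, and is $0$ when the bracket drops degree -- which is exactly our situation.

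Next I would upgrade these equal-index identities to arbitrary indices. Writing the left-hand side of \eqref{eq:tyG2-Serre} as $\mathrm{ad}(\breve{b}_{2,k_1})\,\mathrm{ad}(\breve{b}_{2,k_2})\,\mathrm{ad}(\breve{b}_{2,k_3})\,\mathrm{ad}(\breve{b}_{2,k_4})(\breve{b}_{1,r})$, the relation $[\breve{b}_{2,a},\breve{b}_{2,b}]=0$ from \eqref{tyG-3} shows that the operators $\mathrm{ad}(\breve{b}_{2,a})$ pairwise commute, so this expression is symmetric in $k_1,k_2,k_3,k_4$. Moreover the innermost bracket satisfies $[\breve{b}_{2,k_4},\breve{b}_{1,r}]=[\breve{b}_{2,k_4-1},\breve{b}_{1,r+1}]$ by \eqref{tyG-2}, transferring one unit from the $\breve{b}_2$-index to the $\breve{b}_1$-index. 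Combining symmetry (to move any chosen index into the innermost slot) with this shift, I can move all of $k_1,k_2,k_3,k_4$ into the last index, reducing to the base case $\big[\breve{b}_{2,0},[\breve{b}_{2,0},[\breve{b}_{2,0},[\breve{b}_{2,0},\breve{b}_{1,r+k_1+k_2+k_3+k_4}]]]\big]=0$. The same two moves, now using $[\breve{b}_{1,a},\breve{b}_{1,b}]=0$ for the symmetry, reduce \eqref{eq:tyG2-Serre'} to $\big[\breve{b}_{1,0},[\breve{b}_{1,0},\breve{b}_{2,r+k_1+k_2}]\big]=0$.

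The computations are entirely routine; the only genuinely structural input is the degree bookkeeping that makes the finite Serre relations degenerate to homogeneous (equal-index) current Serre relations in $\breve{{}'\Y}$. I expect that verifying the strict degree drop on the right-hand side of \eqref{eq:tyG2-2}, where several terms of differing degrees appear, is the step deserving the most care, since it is what guarantees the top-degree symbol of its left-hand side is null.
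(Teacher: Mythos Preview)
Your proposal is correct and follows essentially the same route as the paper: both use the commutativity \eqref{tyG-3} to get symmetry in the outer indices, then the shift \eqref{tyG-2} to funnel all indices into the innermost slot, reducing to the equal-index base case coming from \eqref{eq:tyG2-1}--\eqref{eq:tyG2-2}. The only difference is expository: you spell out the degree count showing that the right-hand sides of \eqref{eq:tyG2-1}--\eqref{eq:tyG2-2} drop below the naive degree (so their leading symbols vanish), whereas the paper leaves this implicit and simply cites \eqref{eq:tyG2-2} for the base case.
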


\begin{proof}
We shall prove the identity \eqref{eq:tyG2-Serre}; the other identity \eqref{eq:tyG2-Serre'} can be proved similarly. 

Let $\bS(k_1,k_2,k_3,k_4,r)$ denote the left-hand side of \eqref{eq:tyG2-Serre}. By \eqref{tyG-3}, $\breve{b}_{i,k_1},\ldots,\breve{b}_{i,k_4}$ commute with each other and hence $\bS(k_1,k_2,k_3,k_4,r)$ is symmetric with respect to the first four components.
By \eqref{tyG-2}, $[\breve{b}_{i,k},\breve{b}_{j,0}]=[\breve{b}_{i,0},\breve{b}_{j,k}]$. Hence, we have
\begin{align*}
\bS(k_1,k_2,k_3,k_4,r )
&=\bS(k_1,k_2,k_3,0,k_4+r )= \bS(k_1,k_2,0,k_3,k_4+r )\\
&= \bS(k_1,k_2,0,0,k_3+k_4+r )=\bS(k_1,0,0,k_2,k_3+k_4+r )\\
&=\bS(k_1,0,0,0,k_2+k_3+k_4+r )=\bS(0,0,0,k_1,k_2+k_3+k_4+r )\\
&=\bS(0,0,0,0,k_1+k_2+k_3+k_4+r)=0,
\end{align*}
where the last equality follows from \eqref{eq:tyG2-2}. 
\end{proof}

For $\alpha\in \cR^+$, we define elements $f_{\alpha,r},\X_{\alpha,r}$ formally in the same way as \eqref{def:bfalpha}.

\begin{theorem}
  \label{thm:PBWbasisG2}
The ordered monomials of $\{\X_{\alpha,r},\H_{i,s} \mid \alpha\in \cR^+,i\in \I^0,r\in \N, s\in 2\N+1\}$ (with respect to any fixed total ordering of the set) form a basis of ${}'\Y$.
\end{theorem}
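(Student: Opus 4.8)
The plan is to follow the two-step strategy behind Theorems~\ref{thm:qiso} and~\ref{thm:PBW}, adapted to the reduced presentation of ${}'\Y$: first show the listed ordered monomials span ${}'\Y$, then show they are linearly independent by transporting them to a known basis of $\Gr_{\mathbf{K}}\tUi$ attached to the affine $\imath$quantum group of type $G_2$.

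For the spanning statement I would argue exactly as in Proposition~\ref{prop:PBWbasis}. Using the filtration $\breve{\deg}$ of \eqref{breveDeg} and passing to the associated graded $\breve{{}'\Y}$, the relations \eqref{tyG-1}--\eqref{tyG-4} together with the Serre identities \eqref{eq:tyG2-Serre'}--\eqref{eq:tyG2-Serre} just proved show that $\breve{{}'\Y}$ is (a quotient of) $\C[\breve{h}_{i,r}\mid i\in\I^0,r\in 2\N+1]\otimes\breve{{}'\Y}_+$, where $\breve{{}'\Y}_+$ is the subalgebra generated by the $\breve{b}_{j,m}$ subject to \eqref{tyG-2}, \eqref{tyG-3}, \eqref{eq:tyG2-Serre'} and \eqref{eq:tyG2-Serre}. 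These are precisely the defining relations of the current algebra $U(\n^-[u])$ of type $G_2$ under $\breve{b}_{i,m}\mapsto f_i u^m$, so $\breve{{}'\Y}_+$ is a quotient of $U(\n^-[u])$; the PBW theorem for the current algebra then yields that the ordered monomials of $\breve{b}_{\alpha,m}$ span $\breve{{}'\Y}_+$. Lifting through the filtration, the ordered monomials of $\{\X_{\alpha,r},\H_{i,s}\}$ span ${}'\Y$.

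For independence I would build the map $\Phi:{}'\Y\to\Gr_{\mathbf{K}}\tUi$ sending $\X_{i,r}\mapsto\ov{\X}_{i,r,1}$ and $\H_{i,s}\mapsto\ov{\H}_{i,s,1}$, and check it respects the defining relations of ${}'\Y$. The relations \eqref{ty0}--\eqref{ty2} hold in $\Gr_{\mathbf{K}}\tUi$ by the computations of Section~\ref{sec:verify}, which never use $c_{ij}\ge -2$ and so remain valid for $c_{ij}=-3$ (as noted in Section~\ref{sec:G2rel}); the finite Serre relations \eqref{eq:tyG2-1}--\eqref{eq:tyG2-2} map under $\Phi$ to the degenerated finite Serre relations recorded in Section~\ref{sec:G2rel} (with $b_{j,r}\mapsto\ov{b}_{j,r,1}$). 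The essential advantage of the reduced presentation is that only these finite Serre relations must be verified, so I avoid the general current Serre relation for $c_{ij}=-3$, which is not available. Granting that $\Phi$ is a homomorphism, the argument in the proof of Theorem~\ref{thm:qiso} applies verbatim---it is insensitive to the Dynkin type, resting only on Proposition~\ref{prop:GM} and the degree bookkeeping identifying $\bK_m/\bK_{m+1}$---to give that the ordered monomials $\ov{\X}_{\alpha,r,1},\ov{\H}_{i,s,1}$ form a $\C$-basis of $\Gr_{\mathbf{K}}\tUi$. Since $\Phi$ carries the spanning set above bijectively onto this basis, the spanning set must be linearly independent, hence a basis; in particular $\Phi$ is an isomorphism, so Theorems~\ref{th:A} and~\ref{th:B} hold for $G_2$.

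I expect the main obstacle to be confirming that the GM-type basis argument for $\Gr_{\mathbf{K}}\tUi$ from Theorem~\ref{thm:qiso} genuinely survives in type $G_2$; this reduces to showing that $\psi^{-1}(\mathrm{K}_m)\cap W$ is spanned by the $\X_{\alpha,r,k},\H_{i,r,k}$, which in turn rests on the spanning of $\mathrm{K}_m$ inside $U(\hg^\omega)$ obtainable from the reduced presentation Proposition~\ref{prop:reduced relation} (valid for $c_{ij}=-3$). A secondary, purely bookkeeping point is the compatibility of the nested-bracket root vectors $\X_{\alpha,r}$ and $\breve{b}_{\alpha,m}$ with the long/short root structure of $G_2$, which is routine once the words $i_1,\dots,i_m$ are fixed for each $\alpha\in\cR^+$.
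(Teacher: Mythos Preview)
Your proposal is correct and follows essentially the same approach as the paper's own proof: establish spanning via the $\breve{\deg}$-filtration and the identification of $\breve{{}'\Y}$ with (a quotient of) $U(\n^-[u])\otimes\C[\breve{h}_{i,s}]$, then establish linear independence by constructing the homomorphism $\Phi:{}'\Y\to\Gr_{\mathbf{K}}\tUi$ (using Section~\ref{sec:G2rel} for the finite Serre relations) and invoking the basis argument from the proof of Theorem~\ref{thm:qiso}. Your identification of the potential sticking points---that the GM-type argument is type-independent and that the root-vector bookkeeping is routine---matches what the paper implicitly relies on.
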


\begin{proof}
Let $\mathfrak{n}^-$ be the subalgebra of $\g$ generated by $f_i,i\in\I^0$. Recall the filtration on ${}'\Y$ defined in terms of $\breve{\deg}$ \eqref{breveDeg} with the associated graded algebra $\breve{{}'\Y}$. 
By \eqref{tyG-1}--\eqref{tyG-4} and \eqref{eq:tyG2-Serre'}--\eqref{eq:tyG2-Serre}, $\breve{{}'\Y}$ is identified with (a quotient of) the current algebra $U(\mathfrak{n}^-[u])\otimes \C[\breve{h}_{i,s}|i\in \I^0, s\in 2\N+1]$ via $\breve{b}_{\alpha,r}\mapsto f_{\alpha}u^r,\breve{h}_{i,s}\mapsto \breve{h}_{i,s}$. By the PBW theorem for the current algebra, the ordered monomials of $\{ \breve{b}_{\alpha,r},\breve{h}_{\alpha,s}\}$ form a spanning set of $\breve{{}'\Y} $ and hence the ordered monomials of $\{b_{\alpha,r},h_{i,s}\}$ form a spanning set of ${}'\Y$.

Following the discussion in Section~\ref{sec:G2rel} and Definition~\ref{def:G2}, we have an algebra homomorphism 
\begin{align*}
\Phi: {}'\Y \longrightarrow \Gr_{\bK}\tUi, 
\qquad  \X_{i,r}  \mapsto \ov{\X}_{i,r,1},\quad
\H_{i,s} \mapsto \ov{\H}_{i,s,1}, 
\end{align*}
where $ \X_{i,r,1},\H_{i,s,1}$ are elements in $\tUi$ defined in the same way as \eqref{def:HX} and  $\ov{\X}_{i,r,1},\ov{\H}_{i,s,1}$ are their images in $\Gr_{\bK}\tUi$. Using arguments similar to the proof of Theorem~\ref{thm:qiso}, the $\Phi$-images for ordered monomials in $\{ b_{\alpha,r},h_{\alpha,s}\}$ (with respect to a fixed total ordering) are linearly independent in $\Gr_{\bK}\tUi$, and hence the ordered monomials in $\{ b_{\alpha,r},h_{\alpha,s}\}$ are linearly independent in ${}'\Y $.

Therefore, the ordered monomials in $\{b_{\alpha,r},h_{i,s}\}$ in ${}'\Y$ form a basis of ${}'\Y$.
\end{proof}

\begin{remark}
    By comparing the PBW basis for ${}'\Y$ in Theorem~\ref{thm:PBWbasisG2} and the corresponding PBW basis for $U\big(\g[u]^{\check\omega}\big)$, we see that ${}'\Y$ is a flat $\N$-graded deformation of $U\big(\g[u]^{\check\omega}\big)$ over $\C[\h]$.
\end{remark}

	\vspace{3mm}
\noindent{\bf Funding and Competing Interests.}	

We thank Curtis Wendlandt for helpful discussions. We thank a referee for very helpful suggestions which helped improving the presentation of this paper. KL and WW are partially supported by DMS--2401351. WZ is partially supported by  the New Cornerstone Foundation through the
New Cornerstone Investigator grant, and by Hong Kong RGC grant 14300021, both awarded to Xuhua He.

The authors have no competing interests to declare that are relevant to the content of this article.
The authors declare that the data supporting the findings of this study are available within the paper.

\end{document}